\newtheorem{thm}{Theorem}[section]
\newtheorem{lem}[thm]{Lemma}
\newtheorem{cor}[thm]{Corollary}
\newtheorem{remk}[thm]{Remark}
\newtheorem{defn}[thm]{Definition}
\newenvironment{tightcenter}{%
  \setlength\topsep{0pt}
  \setlength\parskip{0pt}
  \begin{center}
}{%
  \end{center}
}
\newcommand{\al}{\alpha}
\newcommand{\be}{\beta}
\newcommand{\ga}{\gamma}
\newcommand{\rig}{\rightarrow}
\newcommand{\mrig}{\mathrel{-\!\!\!\!\!\rightarrow}}
\newcommand{\Rig}{\Rightarrow}
\newcommand{\bcdw}{\mathbin{\boldsymbol\cdot}}
\newcommand{\seteq}{\mathrel{\mbox{\,\textup{:}\!}=\nolinebreak }\,}
\newcommand{\sbA}{{\boldsymbol{A}}}
\newcommand{\sbB}{{\boldsymbol{B}}}
\newcommand{\sbC}{{\boldsymbol{C}}}
\newcommand{\sbD}{{\boldsymbol{D}}}
\newcommand{\sbE}{{\boldsymbol{E}}}
\newcommand{\sbG}{{\boldsymbol{G}}}
\newcommand{\sbS}{{\boldsymbol{S}}}
\newcommand{\sbX}{{\boldsymbol{X}}}
\newcommand{\sbY}{{\boldsymbol{Y}}}
\newcommand{\ov}{\overline}
\newcommand{\leibniz}{\boldsymbol{\varOmega}}
\newcommand{\Alg}[1]{{\boldsymbol{#1}}}
\newcommand{\Logic}[1]{{\mathbf{#1}}}
\newcommand{\Class}[1]{{\mathsf{#1}}}
\newcommand{\bdot}{\mathbin{\boldsymbol{\cdot}}}
\DeclareMathOperator{\Vop}{\mathbb{V}}
\DeclareMathOperator{\Qop}{\mathbb{Q}}
\begin{document}
\title[Varieties of De Morgan Monoids]{Varieties of De Morgan Monoids:\\Minimality and Irreducible Algebras}
\author{T.\ Moraschini}
\address{Institute of Computer Science, Academy of Sciences of the Czech Republic, Pod Vod\'{a}renskou v\v{e}\v{z}\'{i} 2, 182 07 Prague 8, Czech Republic.}
\email{moraschini@cs.cas.cz}
\author{J.G.\ Raftery}
\address{Department of Mathematics and Applied Mathematics,
 University of Pretoria,
 Private Bag X20, Hatfield,
 Pretoria 0028, South Africa}
\email{{james.raftery@up.ac.za}}
\author{J.J.\ Wannenburg}
\address{Department of Mathematics and Applied Mathematics,
 University of Pretoria,
 Private Bag X20, Hatfield,
 Pretoria 0028, and DST-NRF Centre of Excellence in Mathematical and Statistical Sciences (CoE-MaSS), South Africa}
\email{{jamie.wannenburg@up.ac.za}}
\keywords{De Morgan monoid, Sugihara monoid, residuated lattice, relevance logic.}
\vspace{1mm}
\subjclass[2010]{Primary: 03B47, 06D99, 06F05.  Secondary: 03G25, 06D30, 08B15}
\vspace{1mm}
\thanks{This work received funding from the European Union's Horizon 2020 research and innovation programme under the Marie Sklodowska-Curie grant
agreement No~689176 (project ``Syntax Meets Semantics: Methods, Interactions, and Connections in Substructural logics").
The first author acknowledges project CZ.02.2.69/0.0/0.0/17\_050/0008361, OPVVV M\v{S}MT, MSCA-IF Lidsk\'{e} zdroje v teoretick\'{e} informatice,
and project GJ15-07724Y of the Czech Science Foundation.  The second author was supported in part by the National Research Foundation of South Africa (UID 85407).  The third author was supported by the DST-NRF Centre of Excellence in Mathematical and Statistical Sciences (CoE-MaSS), South Africa.  Opinions expressed and conclusions arrived at are those of the authors and are not necessarily to be attributed to the CoE-MaSS}

\begin{abstract}
It is proved that every finitely subdirectly irreducible De Morgan monoid $\sbA$ (with neutral element $e$)
is either (i)~a Sugihara chain in which $e$ covers $\neg e$ or (ii)~the union of an interval subalgebra $[\neg a,a]$
and two chains of idempotents, $(\neg a]$ and $[a)$, where $a=(\neg e)^2$.  In the latter case, the variety
generated by $[\neg a,a]$ has no nontrivial idempotent member, and $\sbA/[\neg a)$ is a Sugihara chain in
which $\neg e=e$.  It is also proved that there are just four minimal varieties of De Morgan monoids.
These findings are then used to simplify the proof of a description (due to K.~\'{S}wirydowicz) of the lower
part of the subvariety lattice of relevant algebras.  The results throw light on the models and the axiomatic
extensions of fundamental relevance logics.
\end{abstract}

\maketitle

\makeatletter
\renewcommand{\labelenumi}{\text{(\theenumi)}}
\renewcommand{\theenumi}{\roman{enumi}}
\renewcommand{\theenumii}{\roman{enumii}}
\renewcommand{\labelenumii}{\text{(\theenumii)}}
\renewcommand{\p@enumii}{\theenumi(\theenumii)}
\makeatother

{\allowdisplaybreaks


\section{Introduction}\label{introduction}

De Morgan monoids are commutative monoids with a residuated distributive lattice order and a compatible antitone involution
$\neg$,
where
$a\leqslant a^2$
for all elements $a$.  They form a variety, $\mathsf{DMM}$.

The explicit study of residuated lattices
goes back to Ward and Dilworth \cite{WD39} and has
older
antecedents
(see the citations in \cite{BR97,GJKO07,Gri85}).
Much of the interest in De Morgan monoids
stems, however, from
their connection with relevance logic, discovered by Dunn \cite{Dun66} and
recounted briefly below in Section~\ref{relevance logic}
(where further references are supplied).
A key fact, for our purposes, is that
the axiomatic extensions of Anderson and Belnap's logic $\mathbf{R}^\mathbf{t}$
and the varieties of De Morgan monoids form anti-isomorphic lattices, and the
latter are susceptible to the methods of
universal algebra.

Slaney \cite{Sla85,Sla89} showed that the free $0$--generated De Morgan monoid is finite, and that there are only seven
non-isomorphic subdirectly irreducible $0$--generated De Morgan monoids.  No similarly comprehensive
classification is available in
the $1$--generated case, however, where the algebras may already be infinite.
In 1996, Urquhart \cite[p.\,263]{Urq96} observed that
``[t]he algebraic theory of relevant logics is relatively unexplored, particularly by comparison with the field
of algebraic modal logic.''
Acquiescing in a paper of 2001, Dunn and Restall \cite[Sec.~3.5]{DR01} wrote:
``Not as much is known about the algebraic properties of De Morgan monoids as one would like.''
These remarks pre-date many recent papers on residuated lattices---see the bibliography of \cite{GJKO07}, for instance.
But the latter have concentrated mainly on varieties incomparable with $\mathsf{DMM}$ (e.g., Heyting and MV-algebras),
larger than $\mathsf{DMM}$ (e.g.,
full Lambek algebras) or smaller (e.g., Sugihara monoids).

A De Morgan monoid $\sbA$, with neutral element $e$, is said to be \emph{idempotent\/} or \emph{anti-idempotent\/}
if it satisfies $x^2=x$ or $x\leqslant (\neg e)^2$, respectively.  The idempotent De Morgan monoids
are the aforementioned \emph{Sugihara monoids}, and their structure is very well understood.
Anti-idempotence is equivalent to the demand that no nontrivial idempotent algebra belongs to the variety
generated by $\sbA$ (Corollary~\ref{sug cor}), hence the terminology.

It is well known that a De Morgan monoid is finitely subdirectly irreducible iff the element $e$
is join-prime.  The first main result of this paper shows
that any such De Morgan monoid $\sbA$ is either (i)~a totally ordered Sugihara monoid in which $e$ covers $\neg e$ or
(ii)~the union of an interval subalgebra $[\neg a,a]$ and
two chains of idempotent elements, $(\neg a]$ and $[a)$, where $a=
(\neg e)^2$.
In the latter case, the anti-idempotent subalgebra is the $e$--class of a congruence $\theta$ such that $\sbA/\theta$
is a totally ordered Sugihara monoid in which $\neg e=e$, and all other $\theta$--classes are singletons.
(See Theorem~\ref{lollipop 2} and Remark~\ref{interpretation}.)

Subalgebra structure aside, another measure of the
complexity of a De Morgan monoid $\sbA$ is the height, within the subvariety lattice of $\mathsf{DMM}$, of the
variety
generated by $\sbA$.
Accordingly, the present paper initiates an analysis of the lattice of varieties of De Morgan monoids.
We prove that such a variety consists of Sugihara monoids iff it omits a certain pair
of four-element algebras (Theorem~\ref{omit c4 d4}).  This implies that
$\mathsf{DMM}$ has just four minimal subvarieties, all of which are
finitely generated (Theorem~\ref{atoms}).  The covers of these atoms are investigated in a sequel paper \cite{MRW2}.

For philosophical reasons, relevance logic also emphasizes a system called $\mathbf{R}$, which
lacks the so-called Ackermann truth constant $\mathbf{t}$ (corresponding to the
neutral element of a De Morgan monoid).  The logic $\mathbf{R}$ is algebraized by the variety $\mathsf{RA}$ of
\emph{relevant algebras}.
\'{S}wirydowicz \cite{Swi95} has described the bottom of the subvariety lattice of $\mathsf{RA}$.
We simplify the proof of his result (see Theorem~\ref{thm:minimalvarietiesRA}), using our
analysis (from Section~\ref{structure section}) of the
subvarieties of $\mathsf{DMM}$.

These findings have implications for the extension lattices of both $\mathbf{R}$ and $\mathbf{R}^\mathbf{t}$.
For instance, \'{S}wirydowicz's theorem has been applied recently to show that no consistent axiomatic
extension of $\mathbf{R}$ is structurally complete, except for classical propositional logic \cite{RS}.  The
situation for $\mathbf{R}^\mathbf{t}$ is very different and is the subject of ongoing investigation by the
present authors.  (Regarding fragments of $\mathbf{R}^\mathbf{t}$, see \cite{SM92}, \cite[Thm.~9.7]{ORV08}
and \cite[Sec.~10]{Raf16}.)

\section{Residuated Structures}\label{residuated structures section}

\begin{defn}
\textup{An
{\em involutive (commutative) residuated lattice}, or briefly, an {\em IRL},
is an algebra
$\sbA=\langle A;\bcdw,\wedge,\vee,\neg,e\rangle$
comprising a commutative monoid $\langle A;\bcdw,e\rangle$,
a lattice $\langle A;\wedge,\vee\rangle$
and a function $\neg\colon A\mrig A$,
called an {\em involution},
such that $\sbA$ satisfies the (first order) formulas $\neg\neg x=x$ and
\begin{equation}\label{involution-fusion law}
x\bcdw y\leqslant z\;\Longleftrightarrow\;\neg z\bcdw y\leqslant\neg x,
\end{equation}
cf.\ \cite{GJKO07}.\footnote{\,The signature in \cite{GJKO07} is slightly different, but the definable terms are not affected.}
Here, $\leqslant$ denotes the lattice order (i.e., $x\leqslant y$ abbreviates $x\wedge y=x$)
and $\neg$ binds more strongly than any other operation; we refer to $\bcdw$
as {\em fusion}.}
\end{defn}
Setting $y=e$ in (\ref{involution-fusion law}), we see that $\neg$ is antitone.  In fact, De Morgan's laws for $\neg,\wedge,\vee$ hold, so
$\neg$ is an anti-automorphism of $\langle A;\wedge,\vee\rangle$.
If we define
\[
x\rig y\seteq\neg(x\bcdw\neg y)
\textup{
 \ and \ } f\seteq\neg e,
\]
then, as is well known, every IRL satisfies
\begin{align}
& x\bcdw y\leqslant z\;\Longleftrightarrow\;y\leqslant x\rig z \quad\textup{(the law of residuation),}\label{residuation}\\
& \neg x= x\rig f \textup{ \ and \ } x\rig y=\neg y\rig\neg x \textup{ \ and \ } x\bcdw y=\neg(x\rig\neg y).\label{neg properties}
\end{align}

\begin{defn}
\textup{A {\em (commutative) residuated lattice\/}---or
an {\em RL\/}---is an algebra
$\sbA=\langle A;\bcdw,\rig,\wedge,\vee,e\rangle$
comprising a commutative monoid $\langle A;\bcdw,e\rangle$, a lattice $\langle A;\wedge,\vee\rangle$ and
a binary operation $\rig$, called \emph{residuation}, where
$\sbA$ satisfies (\ref{residuation}).
}
\end{defn}
Thus, up to term equivalence, every IRL has a reduct that is an RL.  Conversely,
every RL can be embedded into (the RL-reduct of) an IRL; see \cite{GR04} and the
antecedents cited there.  Every RL satisfies the following well known formulas.  
Here and subsequently, $x\leftrightarrow y$ abbreviates $\textup{$(x\rig y)\wedge(y\rig x)$}$. \enlargethispage{5pt}
\begin{align}
& x\bcdw (x\rig y)\leqslant y
\textup{ \ and \ }  x\leqslant (x\rig y)\rig y
\label{x y law}\\
& (x\bcdw y)\rig z=y\rig (x\rig z)=x\rig (y\rig z) \label{permutation}\\
& (x\rig y)\bcdw(y\rig z)\leqslant x\rig z \label{transitivity}\\
& x\bcdw(y\vee z)=(x\bcdw y)\vee(x\bcdw z)\label{fusion distributivity}\\
& x\leqslant y\;\Longrightarrow\;\left(
                           (x\bcdw z\leqslant y\bcdw z) \;\;\&
                           \;\;
                           (z\rig x\leqslant z\rig y)\;\;\&
                           \;\;
                           (y\rig z\leqslant x\rig z)\right)
\label{isotone}
\\
& x\leqslant y\;\Longleftrightarrow\;e\leqslant x\rig y \label{t order}\\
& x=y\;\Longleftrightarrow\;e\leqslant x\leftrightarrow y \label{t reg}\\
& e\leqslant x\rig x \textup{ \ and \ } e\rig x=x \label{t laws}\\
& e\leqslant x\;\Longleftrightarrow \; x\rig x\leqslant x. \label{t eq}
\end{align}

By (\ref{t reg}), an RL $\sbA$ is \emph{nontrivial} (i.e., $\left| A\right|>1$)
iff $e$ is not its least element, iff $e$ has a strict lower bound.  A class of
algebras is said to be \emph{nontrivial} if it has a nontrivial member.

Another
consequence of (\ref{t reg}) is that a non-injective homomorphism $h$ between RLs
must satisfy $h(c)=e$ for some $c<e$.  (Choose $c=e\wedge(a\leftrightarrow b)$,
where $h(a)=h(b)$ but $a\neq b$.)

In an RL,
we define $x^0\seteq e$ and $x^{n+1}\seteq x^n\bcdw x$ \,for $n\in\omega$.

\begin{lem}\label{bounds}
If a (possibly involutive) RL\/ $\sbA$ has a least element\/ $\bot$\textup{,} then\/
$\top\seteq\bot\rig\bot$ is its greatest element
and, for all\/
$a\in A$\textup{,}
\[
a\bcdw\bot=\bot=\top\rig\bot
\text{ \ and\/ \ }
\bot\rig a=\top=
a\rig\top=\top^2.
\]
In particular, $\{\bot,\top\}$ is a subalgebra of the\/ $\bcdw,\rig,\wedge,\vee\,(,\neg)$
reduct of\/ $\sbA$\textup{.}
\end{lem}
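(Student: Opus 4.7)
The plan is to derive the identities one at a time by routine applications of the law of residuation (\ref{residuation}) and of (\ref{x y law}), together with the fact that $\bot$ is the least element of $\sbA$.

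First I would establish that $a \bcdw \bot = \bot$ for all $a \in A$: since $\bot$ is the least element, $\bot \leqslant a \rig \bot$ holds trivially, so residuation yields $a \bcdw \bot \leqslant \bot$, whence $a \bcdw \bot = \bot$. Applying residuation once more, this time to $x \bcdw \bot = \bot \leqslant \bot$, shows that $x \leqslant \bot \rig \bot = \top$ for every $x \in A$, confirming that $\top$ is the greatest element.

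Next I would verify the remaining identities in turn. The equation $\bot \rig a = \top$ follows because $\bot \bcdw \top = \bot \leqslant a$ gives, via residuation, $\top \leqslant \bot \rig a$, with the reverse inequality holding since $\top$ is the greatest element. The equation $a \rig \top = \top$ follows in the same way from $a \bcdw \top \leqslant \top$. For $\top \rig \bot = \bot$, the law (\ref{x y law}) gives $\top \bcdw (\top \rig \bot) \leqslant \bot$; since $e \leqslant \top$, isotonicity (\ref{isotone}) then yields $\top \rig \bot = e \bcdw (\top \rig \bot) \leqslant \top \bcdw (\top \rig \bot) \leqslant \bot$. Finally, $\top^2 = \top$ follows from $\top = e \bcdw \top \leqslant \top \bcdw \top \leqslant \top$.

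For the subalgebra claim, the identities just proved determine the values of $\bcdw$, $\rig$, $\wedge$, $\vee$ on $\{\bot, \top\}$, and in each case the output lies in $\{\bot, \top\}$. In the involutive case, we additionally have $\neg \bot = \bot \rig f = \top$ (by $\bot \rig a = \top$), whence $\neg \top = \neg \neg \bot = \bot$, so closure under $\neg$ holds as well. There is no real obstacle here; the proof is a sequence of one-line deductions from residuation, the mildest subtlety being the step $\top \rig \bot = \bot$, which requires chaining $e \leqslant \top$ with isotonicity and (\ref{x y law}).
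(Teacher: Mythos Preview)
Your proof is correct and complete. The paper's own proof is essentially a citation to \cite{OR07}, supplemented only by the parenthetical remark that $\top^2=\top$ follows from (\ref{isotone}) and $e\leqslant\top$, and that $\neg$ swaps $\bot$ and $\top$; your argument fills in the omitted details in exactly the expected way, and your treatment of $\top^2=\top$ and of the involution case coincides with the paper's hints.
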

\begin{proof}
See \cite[Prop.~5.1]{OR07}, for instance.  (We infer $\top=\top^2$
from (\ref{isotone}), as $e\leqslant\top$.
The lattice anti-automorphism $\neg$, if present, clearly switches $\bot$ and $\top$.)
\end{proof}

If we say that $\bot,\top$ are {\em extrema\/} of an RL $\sbA$, we mean that $\bot\leqslant a\leqslant\top$
for all $a\in A$.  An RL with extrema is said to be {\em bounded}.  In that case, its extrema need not be
{\em distinguished\/} elements, so they are not always retained in subalgebras.
The next lemma is a straightforward consequence
of (\ref{residuation}).
\begin{lem}\label{rigorously compact}
The following conditions on a bounded IRL $\sbA$\textup{,} with extrema\/ $\bot,\top$\textup{,}
are equivalent.
\begin{enumerate}
\item
$\top\bcdw a=\top$ whenever\/ $\bot\neq a\in A$\textup{.}
\item
$a\rig\bot=\bot$ whenever\/ $\bot\neq a\in A$\textup{.}
\item
$\top\rig b=\bot$ whenever\/ $\top\neq b\in A$\textup{.}
\end{enumerate}
\end{lem}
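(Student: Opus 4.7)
The proof should be a direct calculation using residuation (\ref{residuation}) together with the identities $\top \bcdw \top = \top$ and $\top \rig \bot = \bot$ furnished by Lemma \ref{bounds}. My plan is to close the cycle (i) $\Rightarrow$ (ii) $\Rightarrow$ (iii) $\Rightarrow$ (i), each implication being a short manipulation of these laws.

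For (i) $\Rightarrow$ (ii), I would fix $a \neq \bot$ and set $c = a \rig \bot$. By (\ref{x y law}), $a \bcdw c \leqslant \bot$, so $a \bcdw c = \bot$. If $c$ were nonzero, applying (i) to both $a$ and $c$ would yield
\[
\top = (\top \bcdw a) \bcdw (\top \bcdw c) = (\top \bcdw \top) \bcdw (a \bcdw c) = \top \bcdw \bot = \bot,
\]
collapsing $\sbA$ and contradicting the very existence of $a \neq \bot$. Hence $c = \bot$.

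For (iii) $\Rightarrow$ (i), I would fix $a \neq \bot$ and set $d = \top \bcdw a$. Residuation gives $a \leqslant \top \rig d$. If $d \neq \top$, (iii) would force $\top \rig d = \bot$, whence $a = \bot$, a contradiction. Thus $d = \top$.

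The main obstacle is the remaining direction (ii) $\Rightarrow$ (iii). Given $b \neq \top$ and $c = \top \rig b$, the inequality $c \bcdw \top \leqslant b$ from (\ref{x y law}), combined with residuation, yields $c \rig b = \top$. Supposing $c \neq \bot$, (ii) supplies $c \rig \bot = \bot$, and (\ref{transitivity}) with $x = c$, $y = b$, $z = \bot$ then produces $\top \bcdw (b \rig \bot) \leqslant (c \rig b) \bcdw (b \rig \bot) \leqslant c \rig \bot = \bot$, which (since $e \leqslant \top$) forces $b \rig \bot = \bot$. I expect the final contradiction to close by using (\ref{permutation}) to reroute residuals---rewriting $(a \bcdw \top) \rig \bot$ as $\top \rig (a \rig \bot)$ for suitable $a$---thereby exhibiting an element that witnesses the failure of (ii) applied to $b$ or $c$. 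Converting the ``$\bot$ is prime under fusion'' content of (ii) into the stronger annihilation property (iii) is the delicate passage I anticipate as the principal subtlety.
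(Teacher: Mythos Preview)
Your arguments for (iii)$\,\Rightarrow\,$(i) and (i)$\,\Rightarrow\,$(ii) are correct. The first is exactly the intended residuation manoeuvre; the second can be streamlined by observing that, once $\top\bcdw a=\top$, (\ref{permutation}) and Lemma~\ref{bounds} give
\[
a\rig\bot \,=\, a\rig(\top\rig\bot) \,=\, (\top\bcdw a)\rig\bot \,=\, \top\rig\bot \,=\, \bot.
\]
The paper itself supplies no detailed argument, only labelling the lemma a ``straightforward consequence'' of (\ref{residuation}); the equivalence (i)$\,\Leftrightarrow\,$(iii) is indeed straightforward (for the direction you omitted: if $c=\top\rig b\neq\bot$ then, by (i) and (\ref{x y law}), $\top=\top\bcdw c\leqslant b$, contradicting $b\neq\top$).

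Your difficulty with (ii)$\,\Rightarrow\,$(iii) is not a gap in your reasoning but in the lemma as stated: that implication \emph{fails} for general bounded RLs, so no rerouting of residuals will close the cycle. A counterexample is the three-element chain $\bot<m<\top$, viewed as a Brouwerian algebra with $\bcdw=\wedge$ and $e=\top$. Here (ii) holds (since $m\rig\bot=\top\rig\bot=\bot$), yet $\top\bcdw m=\top\wedge m=m\neq\top$ and $\top\rig m=m\neq\bot$, so (i) and (iii) both fail. In an \emph{involutive} RL the problem evaporates, because $a\rig\bot=\neg(a\bcdw\neg\bot)=\neg(a\bcdw\top)$, so that $a\rig\bot=\bot$ and $a\bcdw\top=\top$ are literally the same statement; this is presumably what the authors had in mind, and it suffices for every subsequent use of rigorous compactness in the paper (all of which concern De Morgan monoids or relevant algebras, both carrying an involution).
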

\begin{defn}\label{rigorously compact definition}
\textup{Following Meyer \cite{Mey86}, we say that an
IRL is
{\em rigorously compact\/} if it is bounded and satisfies
the equivalent conditions of Lemma~\ref{rigorously compact}.}
\end{defn}

\begin{lem}\label{3 conditions}
Let\/ $\sbA$ be an IRL, with $a\in A$\textup{.}  Then\/
\begin{equation*}
\text{$e\leqslant a=a^2$ \ iff \ $a\bcdw\neg a=\neg a$ \ iff \ $a=a\rig a$\textup{.}}
\end{equation*}
\end{lem}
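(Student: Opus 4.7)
The plan is to prove the three conditions equivalent cyclically, labelling them (1), (2), (3) in the order they appear in the displayed equivalence. All the work amounts to routine applications of the residuation law (\ref{residuation}), the involution identities (\ref{neg properties}), isotonicity (\ref{isotone}), and (\ref{t laws}), so the proof should be short.

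For (1) $\Rightarrow$ (2), I would split the equality $a \bcdw \neg a = \neg a$ into two inequalities. The lower bound $\neg a \leqslant a \bcdw \neg a$ follows from $e \leqslant a$ by multiplying both sides by $\neg a$ and using (\ref{isotone}). For the upper bound $a \bcdw \neg a \leqslant \neg a$, residuation (\ref{residuation}) reduces this to $a \leqslant \neg a \rig \neg a$; but by (\ref{neg properties}) we have $\neg a \rig \neg a = a \rig a$, so a second application of residuation reduces the claim to $a^2 \leqslant a$, which is given.

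For (2) $\Rightarrow$ (3), I would compute $a \rig a$ directly: by (\ref{neg properties}), $a \rig a = \neg a \rig \neg a = \neg(\neg a \bcdw \neg \neg a) = \neg(\neg a \bcdw a)$. Hypothesis (2) then gives $\neg(\neg a \bcdw a) = \neg \neg a = a$, so $a \rig a = a$.

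For (3) $\Rightarrow$ (1), I would first note $e \leqslant a \rig a = a$ using (\ref{t laws}), which yields $e \leqslant a$ and, by (\ref{isotone}) applied with $\neg a$ replaced by $a$, $a = a \bcdw e \leqslant a \bcdw a = a^2$. The reverse inequality $a^2 \leqslant a$ follows from $a \leqslant a \rig a$ by residuation. Thus $e \leqslant a = a^2$.

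The only potential hiccup is recognising the identity $\neg a \rig \neg a = a \rig a$ that drives step (1) $\Rightarrow$ (2); but this is immediate from (\ref{neg properties}), so I anticipate no real obstacle.
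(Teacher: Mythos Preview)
Your proof is correct and essentially matches the paper's argument. The paper organizes things slightly differently---it first notes that (2) and (3) are equivalent via the definition $a\rig a=\neg(a\bcdw\neg a)$, then observes $a^2\leqslant a\Leftrightarrow a\bcdw\neg a\leqslant\neg a$ directly from (\ref{involution-fusion law}), and finally handles the remaining half-inequalities using $e\leqslant a$---whereas you run a clean cycle (1)$\Rightarrow$(2)$\Rightarrow$(3)$\Rightarrow$(1) and route through residuation plus $\neg a\rig\neg a=a\rig a$ instead of invoking (\ref{involution-fusion law}); the content is the same.
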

\begin{proof}
The second and third conditions are equivalent, by the definition of $\rig$ and involution properties.
Also, $a^2\leqslant a$ and $a\bcdw\neg a\leqslant \neg a$ are equivalent, by (\ref{involution-fusion law}).
From
$e\leqslant a$ and (\ref{isotone}) we infer
$\neg a=e\bcdw\neg a\leqslant a\bcdw\neg a$.  Conversely,
$a\rig a\leqslant a$ and (\ref{t laws}) yield $e\leqslant a$,
and therefore $a\leqslant a^2$.
\end{proof}

The class of all RLs and that of all IRLs
are finitely axiomatized varieties.  They are arithmetical
(i.e., congruence distributive and congruence permutable) and have
the congruence extension property (CEP).
These facts can be found, for instance, in \cite[Sections~2.2 and 3.6]{GJKO07}.

\section{Square-Increasing IRLs}\label{square increasing icrls section}

An RL is said to be {\em square-increasing\/} if it satisfies $x\leqslant x^2$.
Every square-increasing
RL can be embedded into a square-increasing IRL \cite{Mey73}.
Moreover, Slaney \cite{Sla16} has shown that if two square-increasing IRLs have the same RL-reduct, then they are equal.
The following formulas are valid in all square-increasing IRLs (and not in all IRLs):
\begin{eqnarray}
&& x\wedge y\leqslant x\bcdw y
\label{square increasing cor} \\
&& (x\leqslant e\;\;\&\;\;y\leqslant e)\;\Longrightarrow \; x\bcdw y=x\wedge y\label{square increasing cor2}\\
&& e\leqslant x\vee\neg x. \label{excluded middle}
\end{eqnarray}

The lemma below generalizes another result of Slaney \cite[T36, p.\,491]{Sla85} (where only the case $a=f$ was discussed, and $\sbA$
satisfied an extra postulate).

\begin{lem}\label{cube}
Let\/ $\sbA$ be a square-increasing IRL, with\/ $f\leqslant a\in A$\textup{.}  Then\/ $a^3=a^2$\textup{.}  In particular, $f^3=f^2$\textup{.}
\end{lem}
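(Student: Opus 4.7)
The plan is to establish $a^3 \leqslant a^2$, as the reverse inequality $a^2 \leqslant a^3$ follows at once from $a \leqslant a^2$ and (\ref{isotone}). Applying (\ref{involution-fusion law}) with $x = z = \neg(a^2)$ and $y = a$, the target inequality $a \bcdw a^2 \leqslant a^2$ is equivalent to $\neg(a^2) \bcdw a \leqslant \neg(a^2)$, so I would aim to prove the latter.

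To exploit the hypothesis $f \leqslant a$, I first note that antitonicity of $\neg$ gives $\neg a \leqslant e$, whence (by $a \leqslant a^2$ together with (\ref{isotone})) also $\neg(a^2) \leqslant \neg a \leqslant e$. Law (\ref{square increasing cor2}) then says that any two sub-identity elements fuse to their meet, yielding two useful consequences: (i) $\neg(a^2)$ is idempotent in the fusion, and (ii) $\neg(a^2) \bcdw \neg a = \neg(a^2) \wedge \neg a = \neg(a^2)$, since $\neg(a^2) \leqslant \neg a$. Separately, recalling that $\neg(a^2) = a \rig \neg a$, law (\ref{x y law}) supplies $\neg(a^2) \bcdw a \leqslant \neg a$.

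Assembling these, I would use (i) to rewrite
\[
\neg(a^2) \bcdw a \;=\; \neg(a^2) \bcdw \bigl(\neg(a^2) \bcdw a\bigr),
\]
then apply the bound $\neg(a^2) \bcdw a \leqslant \neg a$ to the inner factor and isotonicity of fusion to obtain $\neg(a^2) \bcdw a \leqslant \neg(a^2) \bcdw \neg a$; finally (ii) collapses the right-hand side to $\neg(a^2)$. Translating back through (\ref{involution-fusion law}) yields $a^3 \leqslant a^2$, and the statement $f^3 = f^2$ is the special case $a = f$.

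The main obstacle, I expect, is recognizing the crucial move in the last step: the naive estimate $\neg(a^2) \bcdw a \leqslant \neg a$ coming from residuation is too loose on its own (one does not have $\neg a \leqslant \neg(a^2)$ in general), and must be tightened by inserting an extra copy of $\neg(a^2)$ via its own idempotence, so that both factors lie below $e$ and (\ref{square increasing cor2}) can collapse the product to the smaller meetand. Once this trick is seen, everything else is routine residuation and involution manipulation.
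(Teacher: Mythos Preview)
Your argument is correct. The reduction to showing $\neg(a^2)\bcdw a\leqslant\neg(a^2)$ via (\ref{involution-fusion law}) is sound, and the key idea---using $\neg(a^2),\neg a\leqslant e$ together with (\ref{square increasing cor2}) to get both the idempotence of $\neg(a^2)$ and the identity $\neg(a^2)\bcdw\neg a=\neg(a^2)$, then inserting the extra $\neg(a^2)$ factor before applying the residuation bound $\neg(a^2)\bcdw a\leqslant\neg a$---works exactly as you describe.

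This is a genuinely different route from the paper's proof. The paper works on the residual side throughout: from $f\leqslant a$ it derives $\neg a\leqslant a\rig a$, hence $a\rig\neg a\leqslant a^2\rig a$, and then uses the square-increasing law on $a\rig\neg a$ together with the transitivity law (\ref{transitivity}) to obtain $a\rig\neg a\leqslant(a^2\rig a)\bcdw(a\rig\neg a)\leqslant a^2\rig\neg a$, which unwinds to $a^3\leqslant a^2$. Your proof instead works on the fusion side and leans on the consequence (\ref{square increasing cor2}) of the square-increasing law (fusion collapses to meet below $e$), never invoking (\ref{transitivity}) at all. Both hinge on the same hypothesis $f\leqslant a$ in the form $\neg a\leqslant e$, but your decomposition is arguably more transparent: the ``extra copy of $\neg(a^2)$'' trick makes the role of idempotence below $e$ explicit, whereas the paper's argument hides the analogous content inside the composition $(a^2\rig a)\bcdw(a\rig\neg a)$.
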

\begin{proof}
As $f\leqslant a$, we have $\neg a=a\rig f\leqslant a\rig a$, by (\ref{neg properties}) and (\ref{isotone}), so
\begin{equation}\label{theophilus}
a\rig\neg a\leqslant a\rig(a\rig a)=a^2\rig a,
\end{equation}
by (\ref{isotone}) and (\ref{permutation}).
By the square-increasing law, (\ref{theophilus}), (\ref{isotone})
and (\ref{transitivity}),
\[
a\rig \neg a\leqslant (a\rig\neg a)^2\leqslant
(a^2\rig a)\bcdw(a\rig\neg a)
\leqslant a^2\rig\neg a.
\]
Thus, $\neg(a^2\rig\neg a)\leqslant\neg(a\rig\neg a)$, i.e., $a^2\bcdw a\leqslant a\bcdw a$ (see (\ref{neg properties})), i.e., $a^3\leqslant a^2$.
The reverse inequality follows from the square-increasing law and
(\ref{isotone}).
\end{proof}

The first assertion of the next theorem has unpublished antecedents in the work of relevance logicians.  A corresponding result for `relevant algebras' is reported
in \cite[Prop.~5]{Swi99},
but the claim and proof below are simpler.

\begin{thm}\label{bounded}
Every finitely generated square-increasing IRL\/ $\sbA$ is bounded.
More precisely,
let\/ $\{a_1,\dots,a_n\}$ be a finite set of generators for $\sbA$\textup{,} with
\[
c=e\vee f\vee\,\textup{{\small $\bigvee$}}_{\!i\,\leq\, n}(a_i\vee\neg a_i), \text{ \ and\/ \ } b=c^2.
\]
Then\/ $\neg b\leqslant a\leqslant b$ for all\/ $a\in A$\textup{.}
\end{thm}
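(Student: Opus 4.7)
The plan is to show that the interval $T \seteq \{a\in A : \neg b\leqslant a\leqslant b\}$ is a subuniverse of $\sbA$ containing the generators; since $\sbA$ is generated by $\{a_1,\dots,a_n\}$, this will force $T=A$, which is exactly the desired boundedness.

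First I would verify that $T$ contains the generators together with $e$ and $f$. By construction, $c$ is an upper bound for $\{a_1,\dots,a_n,e,f\}\cup\{\neg a_1,\dots,\neg a_n\}$, and the square-increasing law gives $c\leqslant c^2=b$, so $b$ dominates all these elements. Applying the antitone involution $\neg$ (and $\neg\neg x=x$) to the inequalities $c\geqslant a_i$, $c\geqslant \neg a_i$, $c\geqslant e$, $c\geqslant f$ shows that $\neg b\leqslant\neg c$ lies below each of $a_i$, $\neg a_i$, $e$, $f$. In particular $\neg b\leqslant e\leqslant b$, so the interval $T$ is nonempty and well-defined.

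Next, closure of $T$ under $\neg$, $\wedge$, $\vee$ is immediate from the antitonicity of $\neg$ and the lattice axioms. The crucial step is to establish $b^2=b$, for which I would invoke Lemma~\ref{cube}: since $f\leqslant c$, that lemma yields $c^3=c^2$, and then
\[
b^2=c^4=c\bcdw c^3=c\bcdw c^2=c^3=c^2=b.
\]
With this in hand, if $a,a'\in T$, then $a\bcdw a'\leqslant b\bcdw b=b^2=b$ (using (\ref{isotone})), while the square-increasing law and (\ref{isotone}) give $a\bcdw a'\geqslant \neg b\bcdw\neg b=(\neg b)^2\geqslant\neg b$. So $T$ is closed under $\bcdw$. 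Closure under $\rig$ is then automatic from $x\rig y=\neg(x\bcdw\neg y)$ in (\ref{neg properties}), together with closure under $\neg$ and $\bcdw$.

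The main conceptual obstacle is the closure of $T$ under $\bcdw$: na\"{\i}vely $b\bcdw b$ could exceed $b$, so one needs an idempotent upper bound. This is precisely where the cubing lemma (Lemma~\ref{cube}) does the work, converting the square-increasing inequality $c\leqslant c^2$ into the equation $c^2=c^3=c^4$ and hence making $b=c^2$ idempotent. Everything else is bookkeeping on the residuated lattice laws.
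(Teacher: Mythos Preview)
Your proof is correct and follows essentially the same approach as the paper's: both hinge on showing $b^2=b$ via Lemma~\ref{cube} (from $f\leqslant c$, hence $c^3=c^2$, hence $c^4=c^2$), which is exactly the key step you identified. The paper frames the argument as an induction on term complexity in the reduced signature $\bcdw,\wedge,\neg,e$, while you phrase it as showing the interval $[\neg b,b]$ is a subuniverse containing the generators; these are equivalent formulations of the same idea. A small cosmetic difference: for the lower bound on fusion the paper uses (\ref{square increasing cor2}) to get the equality $(\neg b)^2=\neg b$ (since $\neg b\leqslant e$), whereas you use only the inequality $(\neg b)^2\geqslant\neg b$ from the square-increasing law, which already suffices.
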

\begin{proof}
By De Morgan's laws, every element of $\sbA$ has the form
$\varphi^\sbA(a_1,\dots,a_n)$ for some term $\varphi(x_1,\dots,x_n)$ in the language $\bcdw,\wedge,\neg,e$.  The proof of the present theorem
is by induction on the
complexity $\#\varphi$ of $\varphi$.  We shall write $\ov{x}$ and $\ov{a}$ for the respective sequences $x_1,\dots,x_n$ and $a_1,\dots,a_n$.

For the case $\#\varphi\leq 1$, note that
$e,a_1,\dots,a_n\leqslant c\leqslant b$,
by the square-increasing law.  Likewise, $f,\neg a_1,\dots,\neg a_n\leqslant c\leqslant b$, so by involution properties,
$\neg b\leqslant e,a_1,\dots,a_n$.
Now suppose $\#\varphi>1$ and that
$\neg b\leqslant\psi^\sbA(\ov{a})\leqslant b$
for all terms $\psi$ with $\#\psi<\#\varphi$.  The desired result, viz.\
\[
\neg b\leqslant\varphi^\sbA(\ov{a})\leqslant b,
\]
follows from the induction hypothesis and basic properties of IRLs if $\varphi$ has the form $\neg\psi(\ov{x})$ or $\psi_1(\ov{x})\wedge\psi_2(\ov{x})$.
We may therefore assume that $\varphi$ is $\psi_1(\ov{x})\bcdw\psi_2(\ov{x})$ for some less complex terms $\psi_1(\ov{x}),\psi_2(\ov{x})$.

By the induction hypothesis and (\ref{isotone}), $(\neg b)^2\leqslant\varphi^\sbA(\ov{a})\leqslant b^2$.
As $\neg b\leqslant e$, we have
$(\neg b)^2=\neg b$, by (\ref{square increasing cor2}).  And since $f\leqslant c$, Lemma~\ref{cube} gives
$c^3=c^2$,
so $b^2=c^4=c^2=b$.  Therefore, $\neg b\leqslant\varphi^\sbA(\ov{a})\leqslant b$,
as required.
\end{proof}

In a square-increasing IRL, the smallest subalgebra $\sbB$ (generated by $\emptyset$) has top element
$(e\vee f)^2=f^2\vee e$ (by Theorem~\ref{bounded} and (\ref{fusion distributivity})).  This is a lower
bound of $f\rig f^2$ (by (\ref{residuation}) and Lemma~\ref{cube}), so $f^2\vee e=f\rig f^2$.  That the extrema of
$\sbB$ can be
expressed without using $\wedge,\vee$ is implicit in \cite[p.\,309]{Mey86}.  Note also
that $e\leftrightarrow f=f\wedge\neg(f^2)$ is the least element of $\sbB$.

An element $a$ of an [I]RL $\sbA$ is said to be {\em idempotent\/} if $a^2=a$.  We say that $\sbA$ is
{\em idempotent\/} if all of its elements are.
In the next result, the key implication is
(\ref{f<=e})$\,\Rig\,$(\ref{a is idempotent}).  A logical analogue of
(\ref{f<=e})$\,\Leftrightarrow\,$(\ref{a is idempotent}) is stated without proof in \cite[p.\,309]{Mey86}.

\begin{thm}\label{idempotence f and e}
In a square-increasing IRL $\sbA$\textup{,} the following are equivalent.
\begin{enumerate}
\item\label{f^2=f}
$f^2=f$\textup{.}
\item\label{f<=e}
$f\leqslant e$\textup{.}
\item\label{a is idempotent}
$\sbA$ is idempotent.
\end{enumerate}
Consequently, a square-increasing non-idempotent IRL has no idempotent subalgebra (and in particular, no trivial subalgebra).
\end{thm}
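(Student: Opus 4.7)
The plan is to run the cycle (iii)$\,\Rig\,$(i)$\,\Rig\,$(ii)$\,\Rig\,$(iii), pivoting on the identity $f\rig f=e$, which is true in every IRL: by (\ref{neg properties}), $f\rig f=\neg(f\bcdw\neg f)=\neg(f\bcdw e)=\neg f=e$.

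The first two implications are almost immediate. Clearly (iii)$\,\Rig\,$(i), since $f\in A$. For (i)$\,\Leftrightarrow\,$(ii): by residuation (\ref{residuation}), $f^2\leqslant f$ is equivalent to $f\leqslant f\rig f=e$, and, combined with the square-increasing law $f\leqslant f^2$, this shows that $f^2=f$ iff $f\leqslant e$.

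The core step is (ii)$\,\Rig\,$(iii). Fix $x\in A$. By (\ref{involution-fusion law}) with $z=f$, $x\bcdw\neg x\leqslant f$; hypothesis (ii) therefore yields $x\bcdw\neg x\leqslant e$. Associativity then gives $\neg x\bcdw x^2=(\neg x\bcdw x)\bcdw x\leqslant e\bcdw x=x$, and residuation converts this to $x^2\leqslant\neg x\rig x$. Using (\ref{neg properties}) again, $\neg x\rig x=\neg(\neg x\bcdw\neg x)=\neg((\neg x)^2)$; the square-increasing law applied to $\neg x$ gives $\neg x\leqslant(\neg x)^2$, so by antitonicity of $\neg$, $\neg((\neg x)^2)\leqslant x$. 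Assembling, $x^2\leqslant x$, which together with the square-increasing law gives $x^2=x$. Since $x$ was arbitrary, $\sbA$ is idempotent.

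The ``consequently'' clause is a direct corollary: any idempotent subalgebra $\sbB$ of $\sbA$ satisfies (iii), hence (i), so $f^2=f$ holds in $\sbB$ and therefore in $\sbA$ (the element $f=\neg e$ is common); then (i)$\,\Rig\,$(iii) forces $\sbA$ itself to be idempotent, contradicting non-idempotence. The trivial subalgebra $\{e\}$ corresponds to the special case $\neg e=e$, which is covered by the same argument. The one place I expect to require a flash of insight is within (ii)$\,\Rig\,$(iii): recognising the factorisation $\neg x\bcdw x^2=(\neg x\bcdw x)\bcdw x$ together with the $e$-bound on the first factor is what allows the hypothesis $f\leqslant e$ to yield global idempotence rather than only $f^2=f$.
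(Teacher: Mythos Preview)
Your proof is correct and uses the same essential ingredients as the paper's: the inequality $x\bcdw\neg x\leqslant f$, the hypothesis $f\leqslant e$, and the square-increasing law applied to $\neg x$. The only difference is organizational: in the key step (ii)$\,\Rig\,$(iii), the paper first reduces $a^2\leqslant a$ to $a\bcdw\neg a\leqslant\neg a$ via (\ref{involution-fusion law}) and then proves the latter in a single chain
\[
a\bcdw\neg a\,\leqslant\,a\bcdw(\neg a)^2\,=\,(a\bcdw(a\rig f))\bcdw\neg a\,\leqslant\,f\bcdw\neg a\,\leqslant\,e\bcdw\neg a\,=\,\neg a,
\]
whereas you residuate $\neg x\bcdw x^2\leqslant x$ to $x^2\leqslant\neg x\rig x$ and then bound $\neg x\rig x=\neg((\neg x)^2)\leqslant x$ separately. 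Both routes are equally short; your treatment of (i)$\,\Leftrightarrow\,$(ii) via $f\rig f=e$ and of the ``consequently'' clause is also fine and matches the paper's reasoning.
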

\begin{proof}
In any IRL, (\ref{f^2=f})$\,\Rig\,$(\ref{f<=e}) instantiates (\ref{involution-fusion law}) (as $\neg f=e$), and
(\ref{a is idempotent})$\,\Rig\,$(\ref{f^2=f}) is trivial.

(\ref{f<=e})$\,\Rig\,$(\ref{a is idempotent}):
Suppose $f\leqslant e$, and let $a\in A$.  It suffices to show that $a^2\leqslant a$, or equivalently (by (\ref{involution-fusion law})),
that $a\bcdw\neg a\leqslant\neg a$.  Now, by the square-increasing law, (\ref{isotone}), the associativity of fusion, (\ref{neg properties}) and (\ref{x y law}),
\[
a\bcdw\neg a\,\leqslant\,a\bcdw(\neg a)^2\,=\,(a\bcdw(a\rig f))\bcdw\neg a\,\leqslant\,f\bcdw\neg a\,\leqslant\,e\bcdw\neg a\,=\,\neg a. \qedhere
\]
\end{proof}

In a partially ordered set, we denote by $[a)$ the set of all upper bounds of an element $a$ (including $a$ itself), and by $(a]$ the set of all lower
bounds.

A {\em deductive filter\/} of a (possibly involutive) RL $\sbA$ is a lattice filter $G$ of $\langle A;\wedge,\vee\rangle$ that is also a submonoid
of $\langle A;\bcdw,e\rangle$.
Thus, $[e)$ is the smallest deductive filter of $\sbA$.  The lattice of deductive filters of $\sbA$ and the congruence lattice
${\boldsymbol{\mathit{Con}}}\,\sbA$ of $\sbA$ are isomorphic.  The isomorphism and its inverse are given by
\begin{eqnarray*}
& G\,\mapsto\,\leibniz
G
\seteq
\{\langle a,b\rangle\in A^2\colon a\rig b,\,b\rig a\in G\};\\
& \theta\,\mapsto\,\{a\in A\colon \langle a\wedge e, e\rangle\in\theta\}.
\end{eqnarray*}
For a deductive filter $G$ of $\sbA$ and $a,b\in A$, we often abbreviate $\sbA/\leibniz G$ as $\sbA/G$, and $a/\leibniz G$ as $a/G$, noting that
\begin{equation}\label{factor order}
a\rig b\in G \textup{ \ iff \ } a/G\leqslant b/G \textup{ \,in }\sbA/G.
\end{equation}
In the square-increasing case, the deductive filters of $\sbA$ are just the lattice filters of $\langle A;\wedge,\vee\rangle$ that contain $e$, by
(\ref{square increasing cor}).  This yields the following lemma.
\begin{lem}\label{ded filter}
In a square-increasing IRL\/ $\sbA$\textup{,}
\begin{enumerate}
\item\label{ded filter 1}
if\/ $e\geqslant b\in A$\textup{,} then\/ $[b)$ is a deductive filter of\/ $\sbA$\textup{,} e.g.,

\smallskip

\item\label{ded filter 2}
$[\neg(f^2))$
is a deductive filter of\/ $\sbA$\textup{.}
\end{enumerate}
\end{lem}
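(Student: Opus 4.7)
The plan is to derive both parts from the observation (made in the paragraph preceding the lemma) that in a square-increasing IRL, a subset is a deductive filter if and only if it is a lattice filter of $\langle A;\wedge,\vee\rangle$ containing $e$. This reduces the work to checking containment of $e$, since $[b)$ is always a lattice filter (an up-set closed under $\wedge$).

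For part (\ref{ded filter 1}), the hypothesis $b\leqslant e$ immediately gives $e\in[b)$, so $[b)$ is a lattice filter containing $e$, hence a deductive filter of $\sbA$ by the preceding observation and (\ref{square increasing cor}).

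For part (\ref{ded filter 2}), it suffices, in view of part (\ref{ded filter 1}), to verify that $\neg(f^2)\leqslant e$. This follows from the square-increasing law applied to $f$: we have $f\leqslant f^2$, so applying the antitone involution $\neg$ and using $\neg f=e$ yields $\neg(f^2)\leqslant e$. Hence $[\neg(f^2))$ is an instance of the situation in (\ref{ded filter 1}).

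There is no serious obstacle here; the only subtlety is recognising that the general fact about square-increasing IRLs (deductive filters = lattice filters containing $e$), which was flagged just before the lemma as a consequence of (\ref{square increasing cor}), does all the real work. The role of Lemma~\ref{cube} is not needed for the proof itself, but it is what makes $\neg(f^2)$ a natural choice of $b$ in the intended later applications, since $f^2$ is idempotent in a square-increasing IRL.
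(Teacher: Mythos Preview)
Your proof is correct and follows essentially the same route as the paper: both derive (\ref{ded filter 1}) from the preceding characterization of deductive filters as lattice filters containing $e$, and obtain (\ref{ded filter 2}) from (\ref{ded filter 1}) via $\neg(f^2)\leqslant e$, which follows from $f\leqslant f^2$. Your remark about Lemma~\ref{cube} is accurate but extraneous to the proof.
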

\noindent
Here, (\ref{ded filter 2}) follows from (\ref{ded filter 1}), because $e\geqslant\neg(f^2)$ follows from $f\leqslant f^2$.
\begin{thm}\label{Sugihara factor}
Let\/ $G$ be a deductive filter of a square-increasing IRL $\sbA$\textup{.}
Then\/
$\sbA/G$ is idempotent iff\/ $\neg(f^2)\in G$\textup{.}
In particular, $\sbA/[\neg(f^2))$ is idempotent.
\end{thm}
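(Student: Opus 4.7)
The plan is to reduce the biconditional to Theorem~\ref{idempotence f and e} applied to the quotient. Since the square-increasing IRLs form a variety, $\sbA/G$ is again a square-increasing IRL, and the neutral and involution elements of $\sbA/G$ are $e/G$ and $f/G$, respectively. By Theorem~\ref{idempotence f and e}, $\sbA/G$ is idempotent iff $f/G \leqslant e/G$ in $\sbA/G$.

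Next, by (\ref{factor order}), the inequality $f/G \leqslant e/G$ holds iff $f \rig e \in G$. A short computation using the definition of $\rig$ and the involution identities from (\ref{neg properties}) gives
\[
f \rig e \;=\; \neg(f \bcdw \neg e) \;=\; \neg(f \bcdw f) \;=\; \neg(f^2).
\]
Combining the two equivalences yields $\sbA/G$ idempotent iff $\neg(f^2) \in G$, which is the first assertion.

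For the concluding statement, recall from Lemma~\ref{ded filter}(\ref{ded filter 2}) that $[\neg(f^2))$ is a deductive filter of $\sbA$, and trivially it contains $\neg(f^2)$. Applying the equivalence just established with $G = [\neg(f^2))$ gives that $\sbA/[\neg(f^2))$ is idempotent.

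There is no real obstacle here: the argument is essentially bookkeeping, routing Theorem~\ref{idempotence f and e} through the Leibniz-filter correspondence. The one point that needs a moment's care is identifying the term $f \rig e$ with $\neg(f^2)$, but this is a direct consequence of the definition of $\rig$ in an IRL.
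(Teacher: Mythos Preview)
Your proof is correct and follows essentially the same route as the paper's: apply Theorem~\ref{idempotence f and e} to $\sbA/G$, translate $f/G\leqslant e/G$ via (\ref{factor order}) to $f\rig e\in G$, and identify $f\rig e$ with $\neg(f^2)$. The only addition you make is the explicit appeal to Lemma~\ref{ded filter}(\ref{ded filter 2}) for the final clause, which the paper leaves implicit.
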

\begin{proof}
$\sbA/G$ is idempotent iff $f/G \leqslant e/G$ (by Theorem~\ref{idempotence f and e}),
iff $f\rig e\in G$ (by (\ref{factor order})),
iff $\neg(f^2)\in G$ (as $\neg(f^2)=\neg(f\bcdw\neg e)=f\rig e$).
\end{proof}

We say that a square-increasing IRL is \emph{anti-idempotent\/} if it satisfies $x\leqslant f^2$ (or equivalently,
$\neg(f^2)\leqslant x$).
This terminology is justified by the corollary below.

\begin{cor}\label{sug cor}
Let\/ $\mathsf{K}$ be a variety of square-increasing IRLs.  Then\/ $\mathsf{K}$ has no nontrivial idempotent member
iff it satisfies\/ $x\leqslant f^2$\textup{.}
\end{cor}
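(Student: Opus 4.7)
The plan is to prove both directions using the three key results we have in hand: Theorem \ref{Sugihara factor} (which supplies an idempotent quotient for every square-increasing IRL), Theorem \ref{idempotence f and e} (which equates idempotence with $f\leqslant e$), and Lemma \ref{bounds} (so that top and bottom are swapped by $\neg$).

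For the \emph{if} direction, I would assume $\mathsf{K}\models x\leqslant f^2$ and take an idempotent $\sbA\in\mathsf{K}$, aiming to show $\sbA$ is trivial.  Since every element is idempotent, $f^2=f$, so the defining inequality of $\mathsf{K}$ collapses to $x\leqslant f$, making $f$ the top element of $\sbA$.  Theorem \ref{idempotence f and e} gives $f\leqslant e$; combined with $e\leqslant f$ (which holds because $f$ is the top), this forces $f=e$.  But then $e$ is the top of $\sbA$, so by Lemma \ref{bounds} the bottom is $\neg e=f=e$, whence top equals bottom and $\sbA$ is trivial.

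For the \emph{only if} direction, I would argue contrapositively via the universal idempotent quotient furnished by Theorem \ref{Sugihara factor}.  Given $\sbA\in\mathsf{K}$, Lemma \ref{ded filter}(ii) says that $G\seteq[\neg(f^2))$ is a deductive filter of $\sbA$, and Theorem \ref{Sugihara factor} says that $\sbA/G$ is idempotent.  Since $\mathsf{K}$ is a variety, it is closed under homomorphic images, so $\sbA/G\in\mathsf{K}$.  If $\mathsf{K}$ has no nontrivial idempotent member, then $\sbA/G$ is trivial, so the congruence $\leibniz G$ is universal and therefore $G=A$.  Hence $\neg(f^2)$ is the least element of $\sbA$, and by applying the anti-automorphism $\neg$, $f^2$ is its greatest element; that is, $\sbA\models x\leqslant f^2$.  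As $\sbA$ was arbitrary, $\mathsf{K}\models x\leqslant f^2$.

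Neither implication looks technically hard; the whole content is to notice that Theorem \ref{Sugihara factor} already manufactures, for each $\sbA$, a \emph{canonical} idempotent quotient, so that the assumed absence of nontrivial idempotent members of $\mathsf{K}$ pins $\neg(f^2)$ to the bottom of every $\sbA\in\mathsf{K}$.  The one subtlety worth flagging is justifying that $\sbA/G$ is trivial precisely when $G=A$ (which follows from the congruence/deductive-filter isomorphism recorded before Lemma \ref{ded filter}); beyond that, the argument is a short chain of references.
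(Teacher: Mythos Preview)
Your proof is correct and follows essentially the same approach as the paper's. The $(\Rightarrow)$ direction is identical: both you and the paper use Theorem~\ref{Sugihara factor} to produce the idempotent quotient $\sbA/[\neg(f^2))$, then invoke closure of $\mathsf{K}$ under homomorphic images to force $[\neg(f^2))=A$. For $(\Leftarrow)$, the paper argues in one line that $f^2=f\leqslant e=\neg f=\neg(f^2)$, so the top lies below the bottom; your route via $f=e$ being the top and hence equal to its own involution is a minor re-packaging of the same observation.
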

\begin{proof}
($\Rig$):  As $\mathsf{K}$ is homomorphically closed but lacks nontrivial idempotent members, Theorem~\ref{Sugihara factor}
shows that the deductive filter $[\neg(f^2))$ of any $\sbA\in\mathsf{K}$ coincides with $A$, i.e., $\mathsf{K}$ satisfies
$\neg(f^2)\leqslant x$.

($\Leftarrow$):  If $\sbA\in\mathsf{K}$ is idempotent, then
$f^2=f\leqslant e=\neg f=\neg(f^2)$, by Theorem~\ref{idempotence f and e},
so by assumption, $\sbA$ is trivial.
\end{proof}

Recall that an algebra $\sbA$ is {\em subdirectly irreducible\/} (SI) if its identity relation
$\textup{id}_A=\{\langle a,a\rangle\colon a\in A\}$ is completely
meet-irreducible in its congruence lattice; see for instance \cite[Thm.~3.23]{Ber12}.
If $\textup{id}_A$ is merely meet-irreducible in
${\boldsymbol{\mathit{Con}}}\,\sbA$, then $\sbA$ is said to be {\em finitely subdirectly irreducible\/}
(FSI), whereas $\sbA$ is {\em simple\/} if
\,$\textup{$\left|{{\mathit{Con}}}\,\sbA\right|=2$}$.  (Thus, trivial algebras are FSI, but are neither SI nor simple.)

By Birkhoff's Subdirect Decomposition Theorem \cite[Thm.~3.24]{Ber12},
every algebra is isomorphic to a subdirect product of SI homomorphic images of itself.
(Even a trivial algebra is a copy of the direct product of an empty family.)
Also, every algebra embeds into an ultraproduct of finitely generated subalgebras of itself \cite[Thm.~V.2.14]{BS81}.
Consequently, every variety is generated---and thus determined---by its SI finitely generated members, so we need to
understand these algebras in the present context.
The following result is well known; see \cite[Cor.~14]{GOR08} and \cite[Thm.~2.4]{OR07}, for instance.  Here and subsequently, an RL $\sbA$
is said to be \emph{distributive}
if its reduct $\langle A;\wedge,\vee\rangle$ is a distributive
lattice.

\begin{lem}\label{fsi si simple}
Let\/ $\sbA$ be
a (possibly involutive) RL.
\begin{enumerate}
\item\label{fsi}
$\sbA$ is FSI iff\/ $e$ is join-irreducible in\/ $\langle A;\wedge,\vee\rangle$\textup{.}
Therefore, subalgebras
and ultraproducts
of FSI \,[I]RLs are FSI.

\smallskip

\item\label{prime}
When\/ $\sbA$
is distributive, it
is FSI iff\/ $e$ is join-prime\/
\textup{(}i.e.,
whenever $a,b\in A$ with $e\leqslant a\vee b$\textup{,} then $e\leqslant a$ or $e\leqslant b$\textup{).}

\smallskip

\item\label{si}
If there is a largest element strictly below $e$\textup{,} then $\sbA$ is SI.  \,The converse holds if
$\sbA$ is square-increasing.

\smallskip

\item\label{simple}
If\/ $e$ has just one strict lower bound, then\/ $\sbA$ is simple.  The converse holds when\/ $\sbA$ is square-increasing.
\end{enumerate}
\end{lem}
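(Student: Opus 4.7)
The plan is to exploit the lattice isomorphism between $\boldsymbol{\mathit{Con}}\,\sbA$ and the lattice of deductive filters of $\sbA$ recorded in the excerpt, under which $\id_A$ corresponds to $[e)$ and intersections coincide. So $\sbA$ is FSI iff $[e)$ is meet-irreducible among deductive filters, SI iff $[e)$ has a unique cover from above (the monolith), and simple iff the only deductive filters are $[e)$ and $A$. Each part then reduces to translating a property of $[e)$ in the filter lattice into a property of $e$ in $\langle A;\wedge,\vee\rangle$.

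For (i), if $[e)$ is reducible, say $F_1\cap F_2=[e)$ with $F_i\supsetneq[e)$, pick $a_i\in F_i\setminus[e)$ and set $b_i:=a_i\wedge e$: each $b_i$ lies in $F_i$ (closure under meet with $e\in F_i$) and $b_i<e$; upward-closure places $b_1\vee b_2\in F_1\cap F_2=[e)$, forcing $b_1\vee b_2=e$, so $e$ is join-reducible. Conversely, suppose $e=a\vee b$ with $a,b<e$; by absorption $a,b\leqslant e$, so powers of $a$ and of $b$ decrease, and the principal deductive filter of $a$ takes the explicit form $\Filter(a)=\{d:a^n\leqslant d\text{ for some }n\}$ (similarly for $b$). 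Both strictly contain $[e)$. The crux is that $\Filter(a)\cap\Filter(b)\subseteq[e)$: given $d\geqslant a^n\vee b^m$, one expands
\[
e=e^{n+m}=(a\vee b)^{n+m}=\bigvee_{i+j=n+m}a^i b^j
\]
via (\ref{fusion distributivity}) and commutativity of $\bcdw$; in each summand either $i\geqslant n$, whence $a^i b^j\leqslant a^n\bcdw e=a^n$ (using $b\leqslant e$), or $j\geqslant m$ and symmetrically $a^i b^j\leqslant b^m$. Thus $e\leqslant a^n\vee b^m\leqslant d$, witnessing reducibility.

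Part (ii) follows from (i) together with the lattice fact that in a distributive lattice join-irreducibility and join-primeness coincide, via $e=e\wedge(a\vee b)=(e\wedge a)\vee(e\wedge b)$. For (iii), if $c$ is the greatest strict lower bound of $e$, any deductive filter $G\supsetneq[e)$ contains some $a\wedge e<e$ by the same trick as above, and maximality of $c$ plus upward-closure force $c\in G$; so $\Filter(c)$ is the monolith. Conversely, under square-increasingness the deductive filters are exactly the lattice filters containing $e$ (by (\ref{square increasing cor})), so $[c')$ is a deductive filter strictly above $[e)$ for each $c'<e$; any $c\in M$ with $c<e$ (where $M$ is the monolith) then lies in every $[c')$, yielding $c\geqslant c'$. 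Part (iv) is analogous: uniqueness of the strict lower bound $c$ makes $a\wedge e\in\{e,c\}$, hence $a\geqslant c$, for every $a\in A$, so $A=[c)$; any $G\supsetneq[e)$ then contains $c$ and hence all of $A$. Under square-increasingness the converse follows because two distinct strict lower bounds $c_1,c_2$ yield distinct deductive filters $[c_1),[c_2)$ strictly above $[e)$, at least one of which is proper (else $c_1=c_2$). The main obstacle throughout is the expansion computation in (i); it is the one step where the monoidal structure of $\sbA$ must genuinely interact with the lattice join.
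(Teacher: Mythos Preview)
The paper does not supply its own proof of this lemma; it merely cites \cite{GOR08} and \cite{OR07} and states the result as well known. So there is nothing to compare against, and the task reduces to checking your argument for correctness.

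Your proof is correct. The approach---translating (complete/finite) meet-irreducibility of $[e)$ in the deductive-filter lattice into properties of $e$ in the underlying lattice---is exactly the standard one. The nontrivial step is the second direction of (i), and your expansion of $(a\vee b)^{n+m}$ via (\ref{fusion distributivity}) together with the pigeonhole observation $i\geqslant n$ or $j\geqslant m$ is the right idea; the bounds $a^ib^j\leqslant a^n$ (resp.\ $\leqslant b^m$) follow from $a,b\leqslant e$ and (\ref{isotone}). Two minor points you might spell out for a reader: first, the description $\Filter(a)=\{d:a^n\leqslant d\text{ for some }n\}$ does require checking closure under fusion (immediate from (\ref{isotone})), not just under meets; second, you do not explicitly address the ``therefore'' clause of (i), but it is immediate, since join-irreducibility of $e$ is expressed by a universal first-order sentence and hence persists in subalgebras and ultraproducts. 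In (iii) and (iv) you tacitly use that $M\supsetneq[e)$ forces some $a\wedge e<e$ to lie in $M$, and that simplicity (resp.\ subdirect irreducibility) entails nontriviality so that $e$ has at least one strict lower bound; both are routine. Otherwise the argument is complete.
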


An [I]RL is said to be {\em semilinear\/} if it is
isomorphic to a subdirect product of totally ordered algebras; it is
{\em integral\/} if $e$ is its greatest element, in which case it satisfies $e=x\rig x=x\rig e$.
A {\em Brouwerian algebra\/} is an integral idempotent RL, i.e., an RL satisfying $x\bcdw y=x\wedge y$.  Such an algebra is
determined by its lattice reduct, and
is distributive, by (\ref{fusion distributivity}).  The variety
of \emph{relative Stone algebras} comprises the semilinear Brouwerian algebras; it is generated by the
Brouwerian algebra on the chain of non-negative integers.

\section{De Morgan Monoids}\label{dmm section}

\begin{defn}\label{de morgan monoid definition}
\textup{A {\em De Morgan monoid\/} is a distributive square-increasing IRL.\footnote{\,But see the first paragraph of Section~\ref{relevant algebras}.}
\,The variety of De Morgan monoids shall be denoted by $\mathsf{DMM}$.}
\end{defn}

The following lemma is well known and should be contrasted with the previous section's concluding remarks about involutionless algebras.

\begin{lem}\label{integral implies boolean}
A De Morgan monoid is integral iff it is a Boolean algebra (in which the operation\/ $\wedge$ is duplicated by fusion).
\end{lem}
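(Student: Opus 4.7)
The reverse implication is straightforward: if $\sbA$ is a Boolean algebra in which fusion is identified with meet, then the neutral element $e$ of the monoid agrees with the top of the lattice, so every element lies below $e$ and $\sbA$ is integral.

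For the forward direction, suppose $\sbA$ is an integral De Morgan monoid, so $e$ is the greatest element of $\sbA$. Then in particular $f=\neg e\leqslant e$, and Theorem~\ref{idempotence f and e} forces $\sbA$ to be idempotent. Combining idempotence with the fact that every element lies below $e$, equation~(\ref{square increasing cor2}) yields $x\bcdw y=x\wedge y$ for all $x,y\in A$. Thus fusion coincides with the lattice meet, as required.

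It remains to check that the lattice reduct of $\sbA$ is a Boolean algebra with $\neg$ serving as the complementation. Distributivity is built into the definition of a De Morgan monoid. Since $\neg$ is an order-reversing involution, it carries the top element $e$ to $f=\neg e$, which is therefore the bottom of $\sbA$. Law~(\ref{excluded middle}) gives $e\leqslant x\vee\neg x$, and since $e$ is the greatest element we obtain $x\vee\neg x=e$. Applying the involution together with De Morgan's laws (valid in every IRL) then yields $x\wedge\neg x=\neg(\neg x\vee x)=\neg e=f$. Hence $\neg x$ is a complement of $x$ in the bounded distributive lattice $\langle A;\wedge,\vee\rangle$, so $\sbA$ is a Boolean algebra.

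The only subtle point in this argument is the sequencing: one must derive idempotence before invoking~(\ref{square increasing cor2}) to conclude that fusion coincides with meet, and this is precisely where integrality is used via Theorem~\ref{idempotence f and e}. No deeper obstacle is anticipated.
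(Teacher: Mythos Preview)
Your proof is correct and follows essentially the same route as the paper's: use (\ref{excluded middle}) together with De Morgan's laws to verify complementation, and use (\ref{square increasing cor2}) to identify fusion with meet.

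One remark: the detour through Theorem~\ref{idempotence f and e} is unnecessary, and the ``subtle point'' you flag in your final paragraph is illusory. Law~(\ref{square increasing cor2}) requires only that $x,y\leqslant e$; it does not presuppose idempotence. Since integrality already gives $a\leqslant e$ for every $a$, you may invoke (\ref{square increasing cor2}) immediately to obtain $\bcdw=\wedge$, exactly as the paper does. (Idempotence then follows as a consequence, of course, but it is not needed as an input.)
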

\begin{proof}
Sufficiency is clear.  Conversely, by (\ref{excluded middle}) and De Morgan's laws,
the fusionless
reduct of an integral De Morgan monoid is a
complemented (boun\-ded)
distributive lattice, i.e., a Boolean algebra, and
$\bcdw$ is $\wedge$, by (\ref{square increasing cor2}).
\end{proof}

An algebra is said to be $n$--\emph{generated} (where $n$ is a cardinal) if it has a generating subset with at most $n$ elements.
Thus, an IRL is $0$--generated iff it has no proper subalgebra.

Infinite
$1$--generated De Morgan monoids exist.  Indeed, the integer powers of $2$, with the usual order and ordinary multiplication as fusion,
can be extended to an algebra of this kind.
The larger varieties of distributive and of square-increasing IRLs each have infinite
$0$--generated members as well \cite{Sla93}, but
Slaney
proved that the free $0$--generated De Morgan monoid has just $3088$ elements \cite{Sla85}.
His arguments show that, up to isomorphism,
only eight $0$--generated De Morgan monoids are FSI; they are exhibited in \cite{Sla89}.  As the seven nontrivial $0$--generated
FSI De Morgan monoids are finite, they are just the $0$--generated SI \,De Morgan monoids.

A theorem of Urquhart \cite{Urq84} implies that the equational theory of $\mathsf{DMM}$ is undecidable, whereas results in
\cite{Bra90,Koz,Mey01} show that the respective varieties of distributive and of square-increasing IRLs are generated by their finite
members, whence their equational theories are decidable (although, in the square-increasing case, no primitive
recursive decision procedure exists \cite{Urq99b}).

Recall that a {\em quasivariety\/} is a class of similar algebras closed under isomorphic images, subalgebras, direct products
and ultraproducts.  Equivalently, it is the model class of some set of pure {\em quasi-equations\/}
\[
(\al_1=\be_1\;\;\&\;\;\dots\;\;\&\;\;\al_n=\be_n)\;\Longrightarrow\;\al=\be
\]
in an algebraic signature.
Here $n\in\omega$, i.e., quasi-equations have finite length
and encompass equations.  Although a quasivariety need not be homomorphically closed (i.e., it need not be a variety), it must
contain a trivial algebra, viz.\ the direct product of its empty subfamily.

\subsection{Relevance Logic and De Morgan Monoids} \label{relevance logic}

$\mbox{}$

\smallskip

For present purposes, a {\em logic\/} is a substitution-invariant finitary consequence relation $\,\vdash$ over sentential formulas
in an algebraic signature, cf.\ \cite{BP89,Cze01,Fon16,FJP03}.
The general connections between
residuated structures and {\em substructural logics\/} are explained in \cite{GJKO07}.  In the case of De Morgan monoids, the connection is with
the older family of {\em relevance logics\/} (a.k.a.\ relevant logics).  The monographs and survey articles on this subject include \cite{AB75,ABD92,Bra03,DR01,Mar06,MM01,Rea88,Res01,RMPB82}.
The correspondence is as follows.

For each subquasivariety $\mathsf{K}$ of $\mathsf{DMM}$, there is a logic $\,\vdash^\mathsf{K}$ with the same signature, defined thus:
for any set $\Gamma\cup\{\al\}$ of formulas, $\Gamma\vdash^\mathsf{K}\al$ iff there exist $n\in\omega$ and $\ga_1,\dots,\ga_n\in\Gamma$ such that every algebra in $\mathsf{K}$ satisfies
\[
e\leqslant \ga_1\wedge\,\dots\,\wedge\ga_n\;\Longrightarrow\;e\leqslant\al.
\]
The elements (also called the {\em derivable rules\/}) of $\,\vdash^\mathsf{K}$ are just the pairs $\Gamma/\al$ for which this is true.
In particular, the {\em theorems\/} of $\,\vdash^\mathsf{K}$ (i.e., the formulas $\al$ for which $\emptyset\vdash^\mathsf{K}\!\al$) are
just the IRL
terms that take values in $[e)$ whenever their variables are interpreted in any member of $\mathsf{K}$.

Because $\mathsf{DMM}$ satisfies (\ref{t reg}),
the logic
$\,\vdash^\mathsf{K}$ is {\em algebraizable\/} in the sense of \cite{BP89}, with $\mathsf{K}$ as its unique equivalent quasivariety.
The map $\mathsf{K}\mapsto\,\,\vdash^\mathsf{K}$ is a lattice anti-isomorphism from the subquasivarieties of $\mathsf{DMM}$
to the extensions of the relevance logic $\mathbf{R}^\mathbf{t}$
of \cite{AB75},
carrying the subvarieties of $\mathsf{DMM}$ onto the axiomatic extensions.
In particular, $\mathbf{R}^\mathbf{t}$ itself is algebraized by $\mathsf{DMM}$.

The relationship between $\mathbf{R}^\mathbf{t}$ and $\mathsf{DMM}$ was essentially established by Dunn \cite{Dun66} (see his contributions to \cite{AB75}, as well as \cite{MDL74}).
Strictly speaking, $\mathbf{R}^\mathbf{t}$ denotes a formal system of
axioms and inference rules (see \cite[pp.~341--343]{AB75}), not a consequence relation.  Here, however, we routinely attribute to a formal system $\mathbf{F}$ the significant properties
of its deducibility relation $\,\vdash_\mathbf{F}$.\,\footnote{\,The general theory of algebraization \cite{BP89}
applies only to consequence relations.
This is in contrast with a tradition---prevalent in relevance logic and elsewhere---of
identifying
a `logic' with its set of theorems alone, leaving its rules of derivation under-determined in the absence of further qualification.  The same tradition
privileges \emph{axiomatic} extensions.
No serious ambiguity
ensues in the case of $\mathbf{R}^\mathbf{t}$, as we can recover the whole of $\,\vdash_{\mathbf{R}^\mathbf{t}}$ from its theorems,
via the so-called {\em enthymematic deduction
theorem\/}:
$\Gamma,\al\vdash_{\mathbf{R}^\mathbf{t}}\beta \textup{ \,iff\, } \Gamma\vdash_{\mathbf{R}^\mathbf{t}}(\al\wedge\mathbf{t})\rig\beta$ (\cite{Mey73a}).}

Although relevance logic has multiple interpretations (see for instance \cite{RM73,SM94,Urq72,Urq83,Urq96}), it
was originally intended as a framework in which the so-called paradoxes of material implication could be avoided.
These include the {\em weakening axiom\/} $p\rig(q\rig p)$.  The unprovability of this postulate
in $\mathbf{R}^\mathbf{t}$ reflects the fact that De Morgan monoids need not be integral, and Lemma~\ref{integral implies boolean}
says in effect that classical propositional logic is the extension of $\mathbf{R}^\mathbf{t}$ by the weakening axiom.
Another relevance logic $\mathbf{R}$, and its connection with De Morgan monoids, will be discussed in Section~\ref{relevant algebras}.

\section{The Structure of De Morgan Monoids}\label{structure section}

In the relevance logic literature, a De Morgan monoid is said to be {\em prime\/} if it is FSI.  The reason is Lemma~\ref{fsi si simple}(\ref{prime}),
but we continue to use `FSI' here, as it makes sense for arbitrary algebras.
The next result is easy and well known, but note that it draws on
all the key properties of De Morgan monoids.

\begin{thm}\label{splitting}
Let\/ $\sbA$ be a De Morgan monoid that is FSI, with\/ $a\in A$\textup{.}  Then\/ $e\leqslant a$ or\/ $a\leqslant f$\textup{.}  Thus, $A=[e)\cup(f\,]$\textup{.}
\end{thm}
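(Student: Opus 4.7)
The plan is to combine three ingredients already at hand: the excluded-middle law \eqref{excluded middle}, the characterization of FSI distributive RLs in terms of join-primeness of $e$ from Lemma~\ref{fsi si simple}(\ref{prime}), and the involution property that $\neg$ is an order-reversing bijection.

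Concretely, fix $a \in A$. Since $\sbA$ is square-increasing, law \eqref{excluded middle} gives
\[
e \leqslant a \vee \neg a.
\]
Because $\sbA$ is a De Morgan monoid, it is distributive; and $\sbA$ is FSI, so by Lemma~\ref{fsi si simple}(\ref{prime}) the element $e$ is join-prime. Hence either $e \leqslant a$, which is the first disjunct, or $e \leqslant \neg a$. In the latter case, applying the antitone involution to both sides and using $\neg\neg a = a$ and $\neg e = f$ yields $a \leqslant f$, which is the second disjunct. The final assertion $A = [e) \cup (f\,]$ is just the set-theoretic restatement of what we have proved.

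There is no real obstacle: the argument is a one-line deduction once the preceding framework is in place. The only point that deserves emphasis is that the proof genuinely uses all three defining features of a De Morgan monoid -- square-increasingness (to invoke \eqref{excluded middle}), distributivity (so that join-irreducibility of $e$ upgrades to join-primeness via Lemma~\ref{fsi si simple}(\ref{prime})), and the involution (to convert $e \leqslant \neg a$ into $a \leqslant f$) -- which matches the remark preceding the theorem that the result draws on all the key properties of De Morgan monoids.
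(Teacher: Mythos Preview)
Your proof is correct and essentially identical to the paper's own argument: invoke \eqref{excluded middle} to get $e\leqslant a\vee\neg a$, use distributivity plus FSI via Lemma~\ref{fsi si simple}(\ref{prime}) to split into $e\leqslant a$ or $e\leqslant\neg a$, and apply the antitone involution in the second case. Your closing observation that all three defining features of a De Morgan monoid are used also mirrors the paper's remark preceding the theorem.
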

\begin{proof}
As $\sbA$ is square-increasing, $e\leqslant a\vee\neg a$, by (\ref{excluded middle}).  So, because $\sbA$ is distributive and FSI, $e\leqslant a$ or $e\leqslant\neg a$,
by Lemma~\ref{fsi si simple}(\ref{prime}).
In the latter case, $a\leqslant f$, because $\neg$ is antitone.
\end{proof}
\begin{cor}\label{subcover}
Let\/ $\sbA$ be a De Morgan monoid that is SI.  Let\/ $c$ be the largest element of\/ $\sbA$
strictly below\/ $e$\/ \,\textup{(}which exists, by Lemma~\textup{\ref{fsi si simple}(\ref{si})).}  Then $c\leqslant f$.
\end{cor}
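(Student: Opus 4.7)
The proof should be a quick consequence of Theorem~\ref{splitting} together with basic facts about the subdirect-irreducibility hierarchy. The plan is as follows.

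First I would observe that every SI algebra is FSI (since complete meet-irreducibility of $\textup{id}_A$ implies meet-irreducibility), so $\sbA$ satisfies the hypothesis of Theorem~\ref{splitting}. Thus every element of $A$ lies in $[e)\cup(f\,]$; in particular $c\in[e)\cup(f\,]$.

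Next I would use the defining property of $c$: it is \emph{strictly} below $e$, so $c<e$, and therefore $c\notin[e)$. The only remaining possibility from Theorem~\ref{splitting} is $c\in(f\,]$, i.e.\ $c\leqslant f$, which is the claim.

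There is essentially no obstacle here; the only thing to double-check is that Lemma~\ref{fsi si simple}(\ref{si}) genuinely supplies the element $c$ in the square-increasing setting (which it does, since De Morgan monoids are square-increasing IRLs by Definition~\ref{de morgan monoid definition}), and that SI entails FSI so as to license the appeal to Theorem~\ref{splitting}. Both are immediate, so the corollary follows in two lines.
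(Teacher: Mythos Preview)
Your proposal is correct and matches the paper's intended argument: the corollary is stated without proof immediately after Theorem~\ref{splitting}, and the one-line derivation you give (SI $\Rightarrow$ FSI, so $c\in[e)\cup(f]$ by Theorem~\ref{splitting}, and $c<e$ forces $c\leqslant f$) is exactly what is meant.
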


The following result about bounded De Morgan monoids was essentially proved by Meyer \cite[Thm.~3]{Mey86},
but his argument assumes that the elements $\bot,\top$ are distinguished, or at least definable in terms of
generators.  To avoid that presupposition, we
give a simpler and more direct proof.

\begin{thm}\label{dm fsi rigorously compact}
Let\/ $\sbA$ be a bounded FSI\, De Morgan monoid.  Then\/ $\sbA$ is rigorously compact\/ \textup{(}see Definition~\textup{\ref{rigorously compact definition}).}
\end{thm}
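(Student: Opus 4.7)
The plan is to verify condition~(i) of Lemma~\ref{rigorously compact}: that $\top \bcdw a = \top$ whenever $\bot \neq a \in A$. Fix such an $a$ and set $c = \top \bcdw a$. The key observation is that Theorem~\ref{splitting} applies to the element $c$ itself (not just to $a$), since $\sbA$ is FSI. This produces two cases, and I expect both to be resolved essentially by unwinding the involution identities.

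In the first case $e \leq c$, Lemma~\ref{bounds} combined with the isotonicity law~(\ref{isotone}) immediately gives $\top = \top \bcdw e \leq \top \bcdw a = c \leq \top$, so $c = \top$, as desired. In the second case $c \leq f$, I would apply the antitone involution to obtain $e = \neg f \leq \neg c$, then rewrite $\neg c = \neg(\top \bcdw a) = \top \rig \neg a$ using the identity $\neg(x\bcdw y) = x \rig \neg y$ from~(\ref{neg properties}). The order property~(\ref{t order}) then converts $e \leq \top \rig \neg a$ into $\top \leq \neg a$, forcing $\neg a = \top$ and therefore $a = \bot$, contradicting the hypothesis. Hence this case cannot occur, and $\top \bcdw a = \top$ in all remaining cases.

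The argument is short because the substantive content is already packaged by earlier results: Theorem~\ref{splitting} encapsulates the FSI-dichotomy, Lemma~\ref{bounds} supplies the arithmetic of the extrema, and the involution cleanly translates between the equivalent formulations of rigorous compactness in Lemma~\ref{rigorously compact}. The main difficulty, such as it is, is recognizing that Theorem~\ref{splitting} should be applied to $c = \top \bcdw a$ rather than to $a$, and that condition~(i) is easier to verify directly than (ii) or (iii). This is precisely what allows one to sidestep Meyer's assumption in \cite{Mey86} that $\bot, \top$ be distinguished or term-definable from chosen generators, since no generator-based analysis enters the argument.
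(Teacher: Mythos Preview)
Your proof is correct and is essentially the paper's own argument, reorganized into two cases: the paper rules out $\top\bcdw a\leqslant f$ up front via (\ref{involution-fusion law}) (from $e\bcdw a\nleqslant\bot$), which is exactly your Case~2 contradiction read through the involution, and then finishes exactly as in your Case~1 (using $\top^2=\top$ to pass from $e\leqslant\top\bcdw a$ to $\top\leqslant\top\bcdw a$). One small presentational point: in your Case~1 chain, the inequality $\top\bcdw e\leqslant\top\bcdw a$ is not literally an isotonicity step from $e\leqslant a$ (which you do not have) but rather $\top\bcdw e\leqslant\top\bcdw(\top\bcdw a)=\top^2\bcdw a=\top\bcdw a$; since you invoke both Lemma~\ref{bounds} and (\ref{isotone}), this is fine, but you may wish to display the intermediate $\top^2\bcdw a$ as the paper does.
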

\begin{proof}
Let $\bot\neq a\in A$, where $\bot,\top$ are the extrema of $\sbA$.
\,It suffices to show that $\top\bcdw a=\top$.
\,As $e\bcdw a\nleqslant\bot$, we have
$\top\bcdw a\not\leqslant f$, by (\ref{involution-fusion law}), so
\begin{equation}\label{top equation}
e\leqslant\top\bcdw a,
\end{equation}
by Theorem~\ref{splitting}.  Recall
that $\top^2=\top$, by Lemma~\ref{bounds}.
Therefore,
\[
\textup{$\top=\top\bcdw e\leqslant\top^2\bcdw a$\, (by (\ref{top equation})) \,$=\top\bcdw a\leqslant\top$,}
\]
whence $\top\bcdw a=\top$.
\end{proof}

\begin{cor}\label{rigorously compact generation}
If a De Morgan monoid is FSI, then its finitely generated subalgebras are rigorously compact.
\end{cor}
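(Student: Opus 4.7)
The plan is to assemble three results that have already been established in the excerpt: the preservation of FSI under subalgebras (Lemma~\ref{fsi si simple}(\ref{fsi})), the boundedness of finitely generated square-increasing IRLs (Theorem~\ref{bounded}), and the rigorous compactness of bounded FSI De Morgan monoids (Theorem~\ref{dm fsi rigorously compact}).

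Concretely, let $\sbA$ be an FSI De Morgan monoid and let $\sbB$ be a finitely generated subalgebra of $\sbA$. First I would observe that $\sbB$ is itself a De Morgan monoid (since $\mathsf{DMM}$ is a variety, and being square-increasing and distributive is inherited by subalgebras). Next, since subalgebras of FSI IRLs are FSI by Lemma~\ref{fsi si simple}(\ref{fsi}), $\sbB$ is FSI. Because $\sbB$ is finitely generated and square-increasing, Theorem~\ref{bounded} supplies extrema $\neg b$ and $b$, so $\sbB$ is bounded. Finally, Theorem~\ref{dm fsi rigorously compact} applied to $\sbB$ gives that $\sbB$ is rigorously compact.

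There is essentially no obstacle here; the corollary is a direct concatenation of the three preceding results. The only mild subtlety worth recording is that the extrema of $\sbB$ need not coincide with any extrema of $\sbA$ (indeed, $\sbA$ might be unbounded), so the rigorous compactness is a statement intrinsic to $\sbB$ with respect to its own top and bottom; but this is exactly what Theorem~\ref{bounded} and Definition~\ref{rigorously compact definition} are set up to handle, since rigorous compactness is defined relative to whatever extrema the algebra has.
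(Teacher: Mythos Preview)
Your proof is correct and matches the paper's approach exactly: the paper's proof is the single sentence ``This follows from Lemma~\ref{fsi si simple}(\ref{fsi}) and Theorems~\ref{bounded} and \ref{dm fsi rigorously compact},'' which is precisely the chain of implications you spelled out.
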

\begin{proof}
This follows from Lemma~\ref{fsi si simple}(\ref{fsi}) and Theorems~\ref{bounded} and \ref{dm fsi rigorously compact}.
\end{proof}

At this point, we need to recall a few concepts and results from universal algebra.
The class operator symbols $\mathbb{I}$, $\mathbb{H}$, $\mathbb{S}$, $\mathbb{P}$, $\mathbb{P}_\mathbb{S}$ and $\mathbb{P}_\mathbb{U}$
stand, respectively, for closure under isomorphic and homomorphic images, subalgebras, direct and subdirect products, and ultra\-products.
Also, $\mathbb{V}$ and $\mathbb{Q}$ denote varietal and quasivarietal closure, i.e., $\mathbb{V}=\mathbb{HSP}$ and
$\mathbb{Q}=\mathbb{ISPP}_\mathbb{U}$.  We abbreviate $\mathbb{V}(\{\sbA\})$ as $\Vop(\Alg{A})$, etc.

A variety $\mathsf{K}$ is said to be
\emph{finitely generated} if $\mathsf{K}=\mathbb{V}(\sbA)$ for some finite algebra $\sbA$ (or equivalently,
$\mathsf{K}=\mathbb{V}(\mathsf{L})$ for some finite set $\mathsf{L}$ of finite algebras).  Every finitely generated variety is \emph{locally finite},
i.e., its finitely generated members are finite algebras
\cite[Thm.~II.10.16]{BS81}.

Recall that $\mathbb{P}_\mathbb{U}(\mathsf{L})\subseteq\mathbb{I}(\mathsf{L})$ for any finite set $\mathsf{L}$ of
finite similar algebras.
Given a class $\mathsf{L}$ of algebras, let us denote by $\mathsf{L}_\textup{FSI}$ the class of all FSI
members of $\mathsf{L}$.

{\em J\'{o}nsson's Theorem\/} \cite{Jon67,Jon95} asserts that,
if $\mathsf{L}$ is contained in a
congruence distributive variety, then
$\mathbb{V}(\mathsf{L})_\textup{FSI}\subseteq \mathbb{HSP}_\mathbb{U}(\mathsf{L})$.
In particular, if $\mathsf{L}$ consists of finitely many finite similar algebras and $\mathbb{V}(\mathsf{L})$ is congruence distributive, then
$\mathbb{V}(\mathsf{L})_\textup{FSI}\subseteq\mathbb{HS}(\mathsf{L})$.

As RLs are congruence distributive, J\'{o}nsson's Theorem shows that, whenever $\mathsf{L}$ consists of totally ordered [I]RLs, then so does
$\mathbb{V}(\mathsf{L})_\textup{FSI}$, whence $\mathbb{V}(\mathsf{L})$ consists of semilinear algebras.  Indeed, since
total order is expressible by a universal positive first order sentence, it persists
under the operators $\mathbb{H}$, $\mathbb{S}$ and $\mathbb{P}_\mathbb{U}$.

\begin{defn}\label{sug def}
\textup{A \emph{Sugihara monoid} is an idempotent De Morgan monoid, i.e.,
an idempotent distributive IRL.}
\end{defn}

The variety $\mathsf{SM}$ of Sugihara monoids is well understood,
largely because of Dunn's
contributions
to \cite{AB75}; see \cite{Dun70} also.
It is locally finite, but not finitely generated.
In fact, $\mathsf{SM}$ is the smallest variety containing the unique Sugihara monoid
\[
\sbS^*=\langle\{a:0\neq a\in\mathbb{Z}\};\bcdw,\wedge,\vee,-,1\rangle
\]
on the set of all nonzero integers such that the lattice order is the usual total
order, the involution $-$ is the usual additive inversion, and
the term function of $\left|x\right|\seteq x\rig x$ is the natural absolute value function.  In this algebra,
\[
a\bcdw b \; = \; \left\{ \begin{array}{ll}
                           \textup{the element of $\{a,b\}$ with the greater absolute value, \,if $\left|a\right| \neq \left|b\right|$;}\\
                           a\wedge b  \textup{ \,if $\left|a\right| = \left|b\right|$,}
                                               \end{array}
                   \right.
\]
and the residual operation $\rig$ is given by
\begin{equation*}
a \rig b \; = \; \left\{ \begin{array}{ll}
                           (-a) \vee b   & \mbox{if\, $a \leqslant b$;} \\
                           (-a) \wedge b & \mbox{if\, $a \not\leqslant b$.}
                           \end{array}
                   \right.
\end{equation*}
Note that $e=1$ and $f=-1$ in $\sbS^*$.  The remark before Definition~\ref{sug def} yields:
\begin{lem}\label{sug fsi}
Every FSI Sugihara monoid is totally ordered.  In particular, Sugihara monoids are semilinear.
\end{lem}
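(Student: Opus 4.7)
The plan is to deduce the lemma directly from the two general facts highlighted in the remark immediately preceding Definition~\ref{sug def}, namely J\'{o}nsson's Theorem and the fact that total order is preserved under the operators $\mathbb{H}$, $\mathbb{S}$ and $\mathbb{P}_\mathbb{U}$ (since it is expressible by a universal positive first-order sentence).

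First, I would note that the concrete algebra $\sbS^\ast$ described just above is totally ordered: its universe is a subset of $\mathbb{Z}$ equipped with the usual integer order, and this is its lattice order by construction. Since $\mathsf{SM}=\mathbb{V}(\sbS^\ast)$ and the variety of IRLs is congruence distributive (hence so is $\mathsf{SM}$), J\'{o}nsson's Theorem applied to the singleton class $\{\sbS^\ast\}$ yields
\[
\mathsf{SM}_\textup{FSI}\;\subseteq\;\mathbb{HSP}_\mathbb{U}(\sbS^\ast).
\]
By the preservation remark just cited, every member of $\mathbb{HSP}_\mathbb{U}(\sbS^\ast)$ is totally ordered, so every FSI Sugihara monoid is totally ordered. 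This proves the first assertion.

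For the ``in particular'' clause, I would appeal to Birkhoff's Subdirect Decomposition Theorem (recalled in Section~\ref{dmm section}): every Sugihara monoid is isomorphic to a subdirect product of SI homomorphic images of itself. Since SI algebras are FSI, each such factor is totally ordered by the first part, so every Sugihara monoid is semilinear.

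There is essentially no obstacle here; the work was done in assembling the remark on J\'{o}nsson's Theorem and persistence of total order. The only thing to be mildly careful about is the quiet use of the fact that $\mathsf{SM}$ is generated as a variety by the single algebra $\sbS^\ast$, which is the content of the sentences preceding the lemma.
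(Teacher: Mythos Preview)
Your proposal is correct and takes essentially the same approach as the paper: the paper simply states ``The remark before Definition~\ref{sug def} yields'' Lemma~\ref{sug fsi}, and your argument is precisely an unpacking of that remark (J\'{o}nsson's Theorem plus persistence of total order under $\mathbb{H}$, $\mathbb{S}$, $\mathbb{P}_\mathbb{U}$) applied with $\mathsf{L}=\{\sbS^*\}$, together with the fact $\mathsf{SM}=\mathbb{V}(\sbS^*)$ noted just before the lemma.
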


\begin{defn}\label{odd def}
\textup{An IRL $\sbA$ is said to be {\em odd\/} if $f=e$ in $\sbA$.}
\end{defn}

\begin{thm}\label{odd cor}
Every odd De Morgan monoid is a Sugihara monoid.
\end{thm}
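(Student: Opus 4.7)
The plan is to reduce the statement immediately to Theorem~\ref{idempotence f and e}. Recall that a De Morgan monoid is by Definition~\ref{de morgan monoid definition} a distributive square-increasing IRL, and a Sugihara monoid is by Definition~\ref{sug def} an idempotent De Morgan monoid. Thus it suffices to show that if $f=e$ in a De Morgan monoid $\sbA$, then $\sbA$ is idempotent.

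First I would observe that the hypothesis $f=e$ trivially yields $f\leqslant e$, which is condition~(\ref{f<=e}) of Theorem~\ref{idempotence f and e}. That theorem, applied to $\sbA$ (which is a square-increasing IRL), then gives condition~(\ref{a is idempotent}), namely that $\sbA$ is idempotent. Combined with the fact that $\sbA$ is already a distributive square-increasing IRL, this shows $\sbA$ is a Sugihara monoid.

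There is essentially no obstacle here; the work has been done in Theorem~\ref{idempotence f and e}, where the nontrivial implication $(\ref{f<=e})\Rig(\ref{a is idempotent})$ was established via the chain $a\bcdw\neg a \leqslant a\bcdw(\neg a)^2 = (a\bcdw(a\rig f))\bcdw\neg a \leqslant f\bcdw\neg a \leqslant e\bcdw\neg a = \neg a$. The present theorem is therefore an immediate corollary, and no further calculation is required beyond invoking that result.
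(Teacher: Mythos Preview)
Your proposal is correct and follows exactly the paper's approach: the paper's proof is the single sentence ``By Theorem~\ref{idempotence f and e}, every square-increasing odd IRL is idempotent,'' which is precisely what you do. Your recap of the inequality chain from the proof of Theorem~\ref{idempotence f and e} is accurate but superfluous here.
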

\begin{proof}
By Theorem~\ref{idempotence f and e}, every square-increasing odd IRL is idempotent.
\end{proof}

In the Sugihara monoid $\sbS=\langle\mathbb{Z};\bcdw,\wedge,\vee,-,0\rangle$
on the set of {\em all\/} integers, the operations are defined like those of $\sbS^*$, except that
$0$ takes over from $1$ as the neutral element for $\bcdw$.  Both $e$
and $f$ are $0$ in $\sbS$, so $\sbS$ is odd.
It follows from Theorem~\ref{odd cor} and Dunn's results in \cite{AB75,Dun70} that the variety of all odd Sugihara monoids is the
smallest quasivariety containing $\sbS$, and that {\sf SM} is the smallest quasivariety containing both $\sbS^*$ and $\sbS$.

For each positive integer $n$, let $\sbS_{2n}$ denote the subalgebra of $\sbS^*$ with universe $\{-n,\dots,-1,1,\dots,n\}$ and,
for $n\in\omega$, let $\sbS_{2n+1}$ be the subalgebra of $\sbS$ with universe $\{-n,\dots,-1,0,1,\dots,n\}$.
Note that $\sbS_2$ is a Boolean algebra.
The results cited above yield:
\begin{thm}\label{sug thm}
Up to isomorphism,
the algebras\/ $\sbS_n$ \textup{($1<n\in\omega$)} are precisely the finitely generated SI\, Sugihara monoids, whence the algebras\/ $\sbS_{2n+1}$ $\textup{($0<n\in\omega$)}$
are just the finitely generated SI odd Sugihara monoids.
\end{thm}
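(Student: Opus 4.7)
The plan is to prove both implications. For the easy direction: each $\sbS_n$ with $n>1$ is finite (hence finitely generated) and, being a subalgebra of $\sbS^*$ (when $n$ is even) or $\sbS$ (when $n$ is odd), is a Sugihara monoid; being a finite nontrivial chain, it has a largest element strictly below $e$, so it is SI by Lemma~\ref{fsi si simple}(\ref{si}). The odd $\sbS_{2m+1}$ have $e=0=f$, so they are precisely the odd ones; this gives the ``whence'' clause in the forward direction.

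For the converse, let $\sbA$ be any finitely generated SI Sugihara monoid. By local finiteness of $\mathsf{SM}$ (noted after Definition~\ref{sug def}), $\sbA$ is finite; by Lemma~\ref{sug fsi}, $\sbA$ is a chain. Set $n=|A|$; I would show $\sbA\cong\sbS_n$. An order-reversing involution on a finite chain has a fixed point iff $n$ is odd. Any fixed point lies in $[e)\cap(f\,]$ by Theorem~\ref{splitting}, hence equals $e=f$ and makes $\sbA$ odd; absent a fixed point, Theorem~\ref{splitting} forces $e$ to cover $f$. Thus the order, the involution, and the position of $e$ in $\sbA$ match those of $\sbS_n$, and the unique order-isomorphism $\varphi\colon\sbA\to\sbS_n$ with $\varphi(e)=e$ automatically commutes with $\neg$.

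It remains to show that $\varphi$ respects fusion; equivalently, that fusion on a totally ordered Sugihara monoid is determined by its order and involution alone. For $a,b\geqslant e$, idempotence and isotonicity (\ref{isotone}) yield $a\bcdw b=a\vee b$; for $a,b\leqslant f$, (\ref{square increasing cor2}) yields $a\bcdw b=a\wedge b$. In the mixed case $a\geqslant e\geqslant f\geqslant b$, (\ref{involution-fusion law}) gives $a\bcdw b\leqslant f$ iff $a\leqslant\neg b$; when $a\leqslant\neg b$, Lemma~\ref{3 conditions} applied to the idempotent $\neg b$ yields $\neg b\bcdw b=b$, whence isotonicity forces $a\bcdw b=b$; when $a>\neg b$, Theorem~\ref{splitting} gives $c:=a\bcdw b\geqslant e$, and then $a\bcdw c=a^2\bcdw b=a\bcdw b=c$ combined with $c\rig c=c$ (Lemma~\ref{3 conditions}) and residuation yields $a\leqslant c\leqslant a$, so $a\bcdw b=a$. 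The main obstacle is this mixed-case analysis; once complete, $\varphi$ is an IRL isomorphism and the theorem follows.
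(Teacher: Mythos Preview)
Your argument is correct. The paper itself does not prove this theorem: it is stated as a consequence of ``the results cited above'', meaning Dunn's work in \cite{AB75,Dun70}, together with the remark that $\mathsf{SM}$ is the smallest quasivariety containing $\sbS^*$ and $\sbS$. So there is no proof in the paper to compare against in detail.

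What you have supplied is a self-contained argument using only material developed earlier in the paper (local finiteness of $\mathsf{SM}$, Lemma~\ref{sug fsi}, Theorem~\ref{splitting}, Lemma~\ref{3 conditions}, and law~(\ref{square increasing cor2})). The substantive step---showing that fusion on a finite totally ordered Sugihara monoid is completely determined by the order and the involution---is carried out cleanly: the cases $a,b\geqslant e$ and $a,b\leqslant e$ are immediate, and your mixed-case analysis (splitting on whether $a\leqslant\neg b$, and in the second subcase using $a\bcdw c=c$ together with $c\rig c=c$ to squeeze $c=a$) is a nice elementary replacement for the appeal to Dunn's external results. The paper's route, by contrast, implicitly relies on the classification of subdirectly irreducible Sugihara monoids already established in \cite{Dun70,AB75}; your approach trades that citation for a short direct computation, which is arguably preferable in a paper that otherwise develops most of its tools internally.
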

We cannot embed $\sbS$ (nor even $\sbS_{2n+1}$) into $\sbS^*$, owing to the involution.  Nevertheless, $\sbS$
is a homomorphic image of $\sbS^*$, and $\sbS_{2n+1}$ is a homomorphic image of $\sbS_{2n+2}$, for all $n\in\omega$.  In each
case, the kernel of the homomorphism identifies $-1$ with $1$; it identifies no other pair of distinct elements.  Also,
$\sbS_{2n-1}$ is a homomorphic image of $\sbS_{2n+1}$ if $n>0$; in this case the kernel collapses $-1,0,1$ to a point,
while isolating all other elements.  Thus, $\sbS_3$ is a homomorphic image of $\sbS_n$ for all $n\geq 3$.
In particular, every nontrivial variety of Sugihara monoids includes $\sbS_2$ or $\sbS_3$.
\begin{cor}\label{osm varieties}
The lattice of varieties of odd Sugihara monoids is the following chain of order type\/ $\omega+1$\textup{\,:}
\[
\mathbb{V}(\sbS_1)\subsetneq\mathbb{V}(\sbS_3)\subsetneq\mathbb{V}(\sbS_5)\subsetneq\,\dots\,\subsetneq\mathbb{V}(\sbS_{2n+1})\subsetneq\,\dots\,\subsetneq\mathbb{V}(\sbS).
\]
\end{cor}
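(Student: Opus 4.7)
The plan divides the argument into three stages: verifying the inclusions, sharpening them to strict inclusions, and proving exhaustiveness among subvarieties of $\mathbb{V}(\sbS)$.

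First, the chain of inclusions is immediate from facts noted just before the corollary: $\sbS_{2n+1}$ is a homomorphic image of $\sbS_{2n+3}$, giving $\mathbb{V}(\sbS_{2n+1})\subseteq\mathbb{V}(\sbS_{2n+3})$, and $\sbS_{2n+1}$ is a subalgebra of $\sbS$, giving $\mathbb{V}(\sbS_{2n+1})\subseteq\mathbb{V}(\sbS)$. Strictness follows from J\'{o}nsson's Theorem (congruence distributivity of $\mathsf{DMM}$ is available): $\mathbb{V}(\sbS_{2n+1})_\textup{FSI}\subseteq\mathbb{HS}(\sbS_{2n+1})$, whose members all have at most $2n+1$ elements, so $\sbS_{2n+3}\notin\mathbb{V}(\sbS_{2n+1})$; and $\sbS\notin\mathbb{V}(\sbS_{2n+1})$ since $\sbS$ contains $\sbS_{2n+3}$ as a subalgebra.

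The bulk of the work is exhaustiveness. Let $\mathsf{K}$ be any variety of odd Sugihara monoids and set $I=\{n\in\omega : \sbS_{2n+1}\in\mathsf{K}\}$. If $I$ has a maximum $N$, I plan to show $\mathsf{K}=\mathbb{V}(\sbS_{2N+1})$. Every nontrivial FSI member $\sbA$ of $\mathsf{K}$ is totally ordered by Lemma~\ref{sug fsi}, so its finitely generated subalgebras are (by Theorem~\ref{sug thm}) various $\sbS_{2k+1}$, all of which lie in $\mathsf{K}$ and therefore satisfy $k\leq N$. I rule out $\sbA$ being infinite by noting that in an infinite totally ordered odd Sugihara monoid the positive elements (paired with negatives around the neutral $0$ via the antitone involution) form an infinite collection, so the subalgebra generated by any $k$ positives is isomorphic to $\sbS_{2k+1}$, contradicting boundedness of $I$. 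Hence every FSI member of $\mathsf{K}$ is isomorphic to some $\sbS_{2k+1}$ with $k\leq N$, and is therefore a homomorphic image of $\sbS_{2N+1}$ by iterating the quotient relationship stated before the corollary. This gives $\mathsf{K}\subseteq\mathbb{V}(\sbS_{2N+1})$, with equality since $\sbS_{2N+1}\in\mathsf{K}$.

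If $I$ is unbounded, a standard ultraproduct argument yields $\sbS\in\mathsf{K}$: since every finitely generated subalgebra of $\sbS$ is contained in some $\sbS_{2n+1}$, $\sbS$ embeds into an ultraproduct of the family $\{\sbS_{2n+1}:n\in\omega\}$ under any non-principal ultrafilter on $\omega$, and varieties are closed under $\mathbb{P}_\mathbb{U}$ and $\mathbb{S}$; so $\mathsf{K}=\mathbb{V}(\sbS)$. The main obstacle I anticipate is in the bounded case, namely cleanly arguing that an infinite FSI odd Sugihara monoid would force $I$ to be unbounded; once this is pinned down, the rest is routine bookkeeping with J\'{o}nsson's Theorem and the homomorphic-image relationships already spelled out in the excerpt.
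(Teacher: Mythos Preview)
The paper does not actually prove this corollary; it simply cites \cite[Sec.~29.4]{AB75} and \cite[Fact~7.6]{GR12}. Your proposal therefore supplies a self-contained argument where the paper offers none, and its overall strategy (inclusions via the stated homomorphic-image and subalgebra relations, strictness via J\'onsson's Theorem, exhaustiveness by analysing which $\sbS_{2k+1}$ lie in an arbitrary subvariety) is correct.

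Two points are worth tightening. First, the detour through infinite FSI members is unnecessary. The paper records (just before Theorem~\ref{sug thm}) that $\mathsf{SM}$ is locally finite, and varieties are determined by their finitely generated SI members; so every SI algebra you need to consider in $\mathsf{K}$ is already finite, and Theorem~\ref{sug thm} identifies it directly as some $\sbS_{2k+1}$. This bypasses the whole paragraph about ruling out infinite $\sbA$. Second, your claim that ``the subalgebra generated by any $k$ positives is isomorphic to $\sbS_{2k+1}$'' asserts an exact size that is not justified by anything in the excerpt (you would need to know the fusion of $a$ with $\neg b$ for arbitrary positives $a>b$ in a general totally ordered odd Sugihara monoid, not just in $\sbS$). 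If you do want to keep the infinite-$\sbA$ argument, the harmless fix is to observe only that such a subalgebra has \emph{at least} $2k+1$ elements (it contains $e$, the $k$ generators, and their $k$ distinct negations), which, combined with local finiteness and Theorem~\ref{sug thm}, forces it to be $\sbS_{2m+1}$ for some $m\geq k$. Either route completes the proof.
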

\begin{proof}
See \cite[Sec.~29.4]{AB75} or \cite[Fact~7.6]{GR12}.
\end{proof}
Odd Sugihara monoids are categorically equivalent to relative Stone algebras \cite[Thm.~5.8]{GR12}.  The equivalence sends an odd Sugihara monoid to the set of lower bounds
of its neutral element $e$, redefining residuation as $(x\rig y)\wedge e$ and restricting the other
RL-operations, as well as all morphisms.  An analogous but more complex
result for arbitrary Sugihara monoids is proved in \cite[Thm.~10.5]{GR15} and refined in \cite[Thm.~2.24]{FG}.
The subvariety lattice of $\mathsf{SM}$ is fully described in \cite{MM12}.
Every quasivariety of odd Sugihara monoids is a variety \cite[Thm.~7.3]{GR12}.  (For a stronger result, see \cite[Thm.~9.4]{OR07}.)

As the structure of Sugihara monoids is very transparent, we concentrate now on De Morgan
monoids that are {\em not\/} idempotent.

\begin{lem}\label{idempotents above f}
Let\/ $\sbA$ be a non-idempotent FSI De Morgan monoid, and let $a$ be an idempotent element of $\sbA$\textup{.}  If\/ $a\geqslant f$\textup{,} then\/ $a>e$\textup{.}
In particular, $f^2>e$\textup{.}
\end{lem}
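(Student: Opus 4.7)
The plan is to exploit the dichotomy imposed by FSI-ness (Theorem~\ref{splitting}) together with the ``collapse'' criterion for idempotence (Theorem~\ref{idempotence f and e}), which together leave very little room for $a$ to sit.

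First I would apply Theorem~\ref{splitting} to the element $a$: since $\sbA$ is FSI, we have $e\leqslant a$ or $a\leqslant f$. In the second case, combining with the hypothesis $a\geqslant f$ forces $a=f$, so $f$ itself is idempotent ($f^2=f$); but Theorem~\ref{idempotence f and e} then declares $\sbA$ idempotent, contradicting the assumption on $\sbA$. Hence we must be in the first case, $e\leqslant a$.

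Next I would rule out the equality $a=e$. If $a=e$, then the hypothesis $a\geqslant f$ becomes $f\leqslant e$, and again Theorem~\ref{idempotence f and e} (in the form (\ref{f<=e})$\,\Rig\,$(\ref{a is idempotent})) makes $\sbA$ idempotent, another contradiction. Thus $a>e$, establishing the main assertion.

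For the ``in particular'' clause, I would apply the just-proved statement to $a\seteq f^2$. The element $f^2$ lies above $f$ by the square-increasing law, and it is idempotent: from Lemma~\ref{cube} we have $f^3=f^2$, so $(f^2)^2=f^4=f\bcdw f^3=f\bcdw f^2=f^3=f^2$. Hence the previous paragraph yields $f^2>e$.

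I do not foresee a real obstacle here; the whole argument is a two-line case split. The only thing to keep track of is that both ``escape routes'' from the desired conclusion ($a<e$ or $a=e$) are blocked by the same tool, namely the equivalence of $f\leqslant e$ with full idempotence.
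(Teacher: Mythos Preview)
Your proof is correct and follows essentially the same approach as the paper's: both use Theorem~\ref{splitting} to force $e\leqslant a$ (after ruling out $a=f$ via Theorem~\ref{idempotence f and e}), then exclude $a=e$ by the same theorem, and finally invoke Lemma~\ref{cube} for the idempotence of $f^2$. The only cosmetic difference is that the paper first notes $a\neq f$ and then applies the dichotomy, whereas you start from the dichotomy directly; the content is identical.
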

\begin{proof}
Suppose $a^2=a\geqslant f$.  As $\sbA$ is not idempotent, $f^2\neq f$, by Theorem~\ref{idempotence f and e}, so
$a\neq f$.  Therefore, $a\not\leqslant f$, whence $e\leqslant a$, by Theorem~\ref{splitting}.  As $f\leqslant a$, we cannot have $a=e$, by Theorem~\ref{idempotence f and e},
so $e<a$.  The last claim follows because $f^2$ is an idempotent upper bound of $f$ (by Lemma~\ref{cube}).
\end{proof}

\begin{thm}\label{Sugihara factor 2}
Let\/ $G$ be a deductive filter of a non-idempotent FSI De Morgan monoid\/ $\sbA$\textup{,} and suppose\/
$\neg(f^2)\in G$\textup{.} Then\/
$\sbA/G$ is an odd Sugihara monoid.
\end{thm}
\begin{proof}
By Theorems~\ref{Sugihara factor} and \ref{idempotence f and e},
$\sbA/G$ is idempotent and $f/G\leqslant e/G$.
By Lemma~\ref{idempotents above f},
$f^2> e$, i.e., $\neg(f^2)< f$, whence $f\in G$, i.e.,
$e\rig f\in G$.
Then
$e/G
\leqslant f/G$
(by (\ref{factor order})), so $e/G
=f/G$, as required.
\end{proof}

\begin{lem}\label{idempotents linear}
Let\/ $\sbA$ be a De Morgan monoid that is FSI, with\/ $f\leqslant a,b\in A$\textup{,} where\/ $a$ and\/ $b$ are idempotent.
Then\/ $a\leqslant b$ or\/ $b\leqslant a$\textup{.}
\end{lem}
\begin{proof}
If $\sbA$ is a Sugihara monoid,
the result follows from Lemma~\ref{sug fsi}.  We may therefore assume that $\sbA$ is not idempotent,
so $e<a,b$, by Lemma~\ref{idempotents above f}.
Then $a\bcdw\neg a=\neg a$
and $b\bcdw\neg b=\neg b$, by Lemma~\ref{3 conditions}, so
\begin{align*}
& (a\bcdw\neg b)\wedge(b\bcdw\neg a)\leqslant (a\bcdw\neg b)\bcdw(b\bcdw\neg a) \textup{ \,\,(by (\ref{square increasing cor}))}\\
& = (a\bcdw\neg a)\bcdw(b\bcdw\neg b)=\neg a\bcdw\neg b \textup{ \,\,(by the above)}\\
& =\neg a\wedge\neg b \textup{ \,\,(by (\ref{square increasing cor2}), as $\neg a,\neg b\leqslant e$).}
\end{align*}
Therefore, by De Morgan's laws,
\begin{align*}
& \,\,\neg(\neg a\wedge\neg b)\leqslant \neg((a\bcdw\neg b)\wedge(b\bcdw\neg a))\\
& =\neg(a\bcdw\neg b)\vee\neg(b\bcdw\neg a)=(a\rig b)\vee (b\rig a)
\end{align*}
and $e< a\vee b=\neg(\neg a\wedge\neg b)$, so $e< (a\rig b)\vee(b\rig a)$.  Then, since $\sbA$ is FSI,
Lemma~\ref{fsi si simple}(\ref{prime}) and (\ref{t order}) yield
$e\leqslant a\rig b$ or $e\leqslant b\rig a$, i.e., $a\leqslant b$ or $b\leqslant a$.
\end{proof}

The subalgebra of an algebra $\sbA$ generated by a subset $X$ of $A$ shall be denoted by $\boldsymbol{\mathit{Sg}}^\sbA X$.

\begin{lem}\label{a is idempotent 2}
Let\/ $\sbA$ be a De Morgan monoid that is FSI, and let $f\leqslant a\in A$\textup{,} where\/
$a\not< f^2$\textup.  Then\/ $a$ is idempotent.
\end{lem}
\begin{proof}
By Lemma~\ref{cube}, $f^2$ is idempotent, so
assume that $a\neq f^2$.
From $f\leqslant f^2$ and $a\not\leqslant f^2$, we infer $a\not\leqslant f$.  Then $e\leqslant a$, by Theorem~\ref{splitting}, so
$e,f\in[\neg a,a]\seteq\{b\in A:\neg a\leqslant b\leqslant a\}$.
Therefore,
$\neg(a^2)\leqslant x\leqslant a^2$ for all $x\in{\mathit{Sg}}^\sbA\{a\}$,
by Theorem~\ref{bounded}.  By Corollary~\ref{rigorously compact generation},
$\boldsymbol{\mathit{Sg}}^\sbA\{a\}$ is
rigorously compact.  In particular,
\begin{equation}\label{absorption}
a^2\bcdw x= a^2 \textup{ \,whenever $\neg(a^2)<x\in \mathit{Sg}^\sbA\{a\}$.}
\end{equation}
As $a\leqslant a^2$ and $a\not\leqslant f^2$, we have $a^2\not\leqslant f^2$.  But $a^2$ and $f^2$ are idempotent, by Lemma~\ref{cube}, so $f^2<a^2$, by Lemma~\ref{idempotents linear}.  Thus,
$\neg(a^2)<\neg(f^2)\in\mathit{Sg}^\sbA\{a\}$, so
\begin{equation}\label{get a life}
a^2=a^2\bcdw\neg(f^2),
\end{equation}
by (\ref{absorption}).
As $\sbA/[\neg(f^2))$ is idempotent (by Theorem~\ref{Sugihara factor}), $\neg(f^2)\leqslant a^2\rig a$, i.e.,
$a^2\bcdw\neg(f^2)\leqslant a$,
by (\ref{factor order}) and (\ref{residuation}).
Then (\ref{get a life})
gives $a^2\leqslant a$, and so $a^2=a$.
\end{proof}

\begin{thm}\label{interval subalgebra}
Let\/ $\sbA$ be a non-idempotent FSI De Morgan monoid, with\/ $f^2\leqslant a\in A$\textup{.}  Then\/ $\neg a<a$ and the interval\/
$[\neg a,a]$ is a subuniverse (i.e., the universe of a subalgebra) of\/ $\sbA$\textup{.}
In particular, $[\neg(f^2),f^2]$
is a subuniverse
of\/ $\sbA$\textup{.}
\end{thm}
\begin{proof}
In $\sbA$, we have
$\neg(f^2)\leqslant e$, as noted after Lemma~\ref{ded filter}, while
$e< f^2$, by Lemma~\ref{idempotents above f}.
Of course, $\neg a\leqslant \neg(f^2)$, so $\neg a<a$.  Thus, $[\neg a,a]$
includes $e$, and it is obviously closed under $\wedge$, $\vee$ and $\neg$.
Closure under fusion
follows from (\ref{isotone}) and the
square-increasing law,
because $a$ is idempotent (by Lemma~\ref{a is idempotent 2}).
\end{proof}

\begin{thm}\label{lollipop}
In any FSI\, De Morgan monoid,
the filter\/ $[f)$ is the union of the interval\/ $[f,f^2]$ and a chain whose least element is\/ $f^2$\textup{.}  The elements
of this chain are just the idempotent upper bounds of\/ $f$.
\end{thm}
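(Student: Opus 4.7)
The plan is to pin down the set of idempotent upper bounds of $f$ as exactly $[f^2)$, and then to show that every element of $[f)$ lying outside $[f,f^2]$ must be such an idempotent upper bound. The only real work is to combine Lemma~\ref{a is idempotent 2} with the monotonicity of fusion and Lemma~\ref{idempotents linear}; the rest is bookkeeping.

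First I would dispose of the idempotent (Sugihara) case trivially: by Lemma~\ref{sug fsi}, an FSI Sugihara monoid is totally ordered; every element is idempotent and $f=f^2$, so $[f,f^2]=\{f\}$ and $[f)=[f^2)$ is a chain consisting of idempotent upper bounds of $f$.

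From now on I would assume $\sbA$ is not idempotent, so that $f<f^2$ and $e<f^2$ by Theorem~\ref{idempotence f and e} and Lemma~\ref{idempotents above f}. I would then verify three assertions in turn. (i)~Every idempotent $a\in[f)$ satisfies $a\geqslant f^2$: by (\ref{isotone}), $a=a^2\geqslant f\bcdw f=f^2$. (ii)~Every $a\in[f^2)$ is idempotent: we have $f\leqslant f^2\leqslant a$ and $a\not<f^2$, so Lemma~\ref{a is idempotent 2} applies. (iii)~Every $a\in[f)$ lies in $[f,f^2]\cup[f^2)$: if $a\not\leqslant f^2$, then $a\not<f^2$, whence Lemma~\ref{a is idempotent 2} forces $a$ to be idempotent, and then (i) gives $a\geqslant f^2$.

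Combining (i) and (ii), the idempotent upper bounds of $f$ are exactly the elements of $[f^2)$. Any two such elements are idempotent and lie above $f$, so Lemma~\ref{idempotents linear} makes them comparable; hence $[f^2)$ is a chain, with least element $f^2$ (which is itself idempotent by Lemma~\ref{cube}). Together with (iii), this yields the decomposition $[f)=[f,f^2]\cup[f^2)$ claimed by the theorem. The one non-cosmetic step is (iii); I do not expect it to be an obstacle, since Lemma~\ref{a is idempotent 2} has already been established precisely to cover this situation.
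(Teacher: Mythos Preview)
Your proof is correct and follows essentially the same approach as the paper's. Both arguments split off the idempotent case via Lemma~\ref{sug fsi}, then in the non-idempotent case use (\ref{isotone}) and Lemma~\ref{a is idempotent 2} to identify the idempotent upper bounds of $f$ with $[f^2)$, and Lemma~\ref{idempotents linear} to make this set a chain; your steps (i)--(iii) simply unpack more explicitly what the paper compresses into one sentence.
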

\begin{proof}
This follows from Lemma~\ref{sug fsi} when the algebra is idempotent.  In the opposite case, the idempotent upper bounds of $f$ are exactly the upper bounds
of $f^2$ (by (\ref{isotone}) and Lemma~\ref{a is idempotent 2}), and they are comparable with all upper bounds of $f$ (by Lemmas~\ref{a is idempotent 2} and \ref{idempotents linear}).
\end{proof}

{\tiny
\thicklines
\begin{center}
\begin{picture}(250,120)(15,-39)

\put(45,19){\small{$f^2$}}
\put(48,4){\small{$f$}}
\put(50,-2){\circle*{3}}
\put(50,30){\line(0,1){15}}
\put(50,47){\line(0,1){2}}
\put(50,51){\line(0,1){2}}
\put(50,55){\line(0,1){2}}
\put(50,59){\line(0,1){2}}
\put(50,30){\circle*{3}}
\put(50,14){\circle{30}}
\put(55,51){idempotents}
\put(55,43){above $f$}
\put(45,-17){\small $[f)$}

\put(0,50){\small{\ref{lollipop}:}}


\put(245,29){\small{$f^2$}}
\put(237,7){\small{$\neg(f^2)$}}
\put(250,0){\circle*{3}}
\put(250,40){\line(0,1){15}}
\put(250,57){\line(0,1){2}}
\put(250,61){\line(0,1){2}}
\put(250,65){\line(0,1){2}}
\put(250,69){\line(0,1){2}}
\put(250,40){\circle*{3}}
\put(250,20){\circle{50}}
\put(250,0){\line(0,-1){15}}
\put(250,-17){\line(0,-1){2}}
\put(250,-21){\line(0,-1){2}}
\put(250,-25){\line(0,-1){2}}
\put(250,-29){\line(0,-1){2}}

\put(200,50){\small{\ref{lollipop 2}:}}

\end{picture}
\end{center}}

\begin{thm}\label{lollipop 2}
Any non-idempotent FSI De Morgan monoid is the union of the interval subuniverse\/ $[\neg(f^2),f^2]$ and two chains of idempotents,
$(\neg(f^2)]$ and\/ $[f^2)$\textup{.}
\end{thm}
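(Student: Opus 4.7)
The plan is to split the statement into two claims: (a)~every $a \in A$ lies in one of the three pieces $[\neg(f^2),f^2]$, $[f^2)$, $(\neg(f^2)]$; and (b)~the two outer pieces are chains of idempotents.

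Claim (b) should come almost for free from earlier results. The chain $[f^2)$ is essentially already in Theorem~\ref{lollipop}: any $a \geqslant f^2$ satisfies $f \leqslant a$ and $a \not< f^2$, so Lemma~\ref{a is idempotent 2} makes it idempotent and Lemma~\ref{idempotents linear} makes it comparable with everything else in $[f^2)$. For $(\neg(f^2)]$ I would use the involution: if $a \leqslant \neg(f^2)$ then $\neg a \geqslant f^2 > e$ (Lemma~\ref{idempotents above f}) lies in the already-established top chain of idempotents, so $\neg a$ is idempotent; Lemma~\ref{3 conditions} applied to $\neg a$ yields $a \bcdw \neg a = a$, and since $a \leqslant \neg(f^2) \leqslant e \leqslant \neg a$, isotonicity gives $a^2 \leqslant \neg a \bcdw a = a$, forcing $a^2 = a$. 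Comparability of elements of $(\neg(f^2)]$ passes from $[f^2)$ through the order-reversing involution.

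For claim (a), fix $a \in A$. By Theorem~\ref{splitting} I may assume $e \leqslant a$ (otherwise dualise via $\neg a$); then $a \geqslant \neg(f^2)$ automatically, and the only remaining case is $a \not\leqslant f^2$, where the goal is $a \geqslant f^2$. My strategy is to reduce this to proving $f \leqslant a$, because then Lemma~\ref{a is idempotent 2} (using $a \not< f^2$) will make $a$ idempotent, and Lemma~\ref{idempotents linear} applied to $a$ and $f^2$, combined with $a \not\leqslant f^2$, will upgrade this to $a > f^2$.

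The hard part is proving $f \leqslant a$, and I plan to do it inside $\sbB := \boldsymbol{\mathit{Sg}}^\sbA\{a\}$, which is FSI by Lemma~\ref{fsi si simple}(\ref{fsi}) and rigorously compact by Corollary~\ref{rigorously compact generation}, with extrema $\bot_\sbB < \top_\sbB$. Theorem~\ref{bounded} applied to the single generator $a$ (noting $e \leqslant a$ and $\neg a \leqslant f$) gives $\top_\sbB = (e \vee f \vee a \vee \neg a)^2 = (a \vee f)^2$; and since $f \leqslant a \vee f$ and $a \vee f \not\leqslant f^2$, Lemma~\ref{a is idempotent 2} makes $a \vee f$ idempotent, so $\top_\sbB = a \vee f$. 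The crux is then the universal estimate $\neg a \bcdw (a \vee f) \leqslant f^2$: by distributivity (\ref{fusion distributivity}) it suffices to bound the two summands, and (i)~$a \bcdw \neg a \leqslant f^2$ holds in every square-increasing IRL, since by (\ref{involution-fusion law}) it amounts to $\neg(f^2) \leqslant \neg a \rig \neg a$, which is true because $\neg(f^2) \leqslant e$; while (ii)~$\neg a \bcdw f \leqslant f \bcdw f = f^2$ uses only $\neg a \leqslant f$. Were $\neg a$ distinct from $\bot_\sbB$, rigorous compactness would force $\neg a \bcdw \top_\sbB = \top_\sbB = a \vee f$, contradicting $a \vee f \not\leqslant f^2$. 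Hence $\neg a = \bot_\sbB$, so $a = \top_\sbB = a \vee f \geqslant f$. I expect the main conceptual obstacle to be spotting this particular contradictory pair: the purely universal IRL bound $\neg a \bcdw (a \vee f) \leqslant f^2$ is incompatible with the absorption of $\top_\sbB$ forced by rigorous compactness, and it is precisely that clash which corners $\neg a$ into being the bottom of $\sbB$.
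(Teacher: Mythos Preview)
Your argument is correct, and the route you take for the hard direction differs from the paper's in an interesting way.

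For claim (b), the paper does essentially what you do, though it invokes (\ref{square increasing cor2}) directly to see that elements of $(\neg(f^2)]$ are idempotent (they are $\leqslant e$), rather than going through Lemma~\ref{3 conditions}; either way is fine.

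For claim (a), both arguments ultimately exploit rigorous compactness of a one-generated subalgebra (Corollary~\ref{rigorously compact generation}), but they do so against different elements. The paper never tries to prove $f\leqslant a$. Instead, assuming $a$ incomparable with $f^2$ and $e<a$, it shows that $f\bcdw a$ is an idempotent upper bound of $f$ lying strictly above $f^2$ (via Theorem~\ref{lollipop}), works in $\sbC=\boldsymbol{\mathit{Sg}}^\sbA\{f\bcdw a\}$, and uses rigorous compactness there to get $\neg(f^2)\bcdw(f\bcdw a)=f\bcdw a\not\leqslant a$. On the other hand, $(\neg(f^2)\bcdw f)\bcdw a=\neg(f^2)\bcdw a\leqslant a$ because $\neg(f^2)\leqslant e$. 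This contradicts associativity of fusion.

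Your version avoids the associativity trick: you identify the top of $\sbB=\boldsymbol{\mathit{Sg}}^\sbA\{a\}$ explicitly as $a\vee f$ (using Lemma~\ref{a is idempotent 2} to collapse the square in Theorem~\ref{bounded}), and then the universal bound $\neg a\bcdw(a\vee f)\leqslant f^2$ clashes with the absorption law of rigorous compactness unless $\neg a$ is the bottom of $\sbB$, which forces $a=\top_\sbB\geqslant f$ directly. From there Lemmas~\ref{a is idempotent 2} and \ref{idempotents linear} finish as you say. This is arguably a cleaner endgame: rather than engineering two computations of the same fusion product that disagree, you pin down $a$ itself as the top element of its own generated subalgebra. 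The price is that you need the explicit estimate $\neg a\bcdw(a\vee f)\leqslant f^2$, but as you note both summands are easy (indeed $a\bcdw\neg a\leqslant f$ already by (\ref{x y law})).
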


\begin{proof}
Let $\sbA$ be a non-idempotent FSI De Morgan monoid.  Theorem~\ref{interval subalgebra} shows that $e,f\in[\neg(f^2),f^2]$ and
(with Lemma~\ref{bounds}) that $\neg(f^2)\bcdw f=\neg(f^2)$.  Note that $[f^2)$ and $(\neg(f^2)]$ are both chains
of idempotents, by Theorem~\ref{lollipop}, involution properties and (\ref{square increasing cor2}).

Suppose, with a view to contradiction, that there exists $a\in A$ such that $a\notin (\neg(f^2)]\cup[\neg(f^2),f^2]\cup [f^2)$.
By Theorem~\ref{splitting}, $e<a$ or $a<f$.  By involutional symmetry, we may assume that $e< a$.  Then $a$ is incomparable
with $f^2$ (as $a\notin [\neg(f^2),f^2]\cup [f^2)$), so $f^2\vee a>f^2$.  Also, since $f^2,a\geqslant e$, we have $f^2\bcdw a\geqslant f^2\vee a$, by (\ref{isotone}), so $f^2\bcdw a> f^2$.

Because $a>e$, we have $f\bcdw a\geqslant f$.  If $f\bcdw a\in[\neg(f^2),f^2]$, then
\[
f^2\bcdw a\leqslant (f\bcdw a)^2\leqslant f^4=f^2 \textup{ \ (by Lemma~\ref{cube}),}
\]
a contradiction.  So, by Theorem~\ref{lollipop}, $f\bcdw a$ is idempotent and $f\bcdw a>f^2$.  Then
$f\bcdw a>e,f$, and by Theorem~\ref{interval subalgebra}, $\neg(f\bcdw a)< f\bcdw a$.  This, with Theorem~\ref{bounded}, shows that
$f\bcdw a$ is the greatest element of the algebra $\sbC\seteq\boldsymbol{\mathit{Sg}}^\sbA \{f\bcdw a\}$, and $\neg(f\bcdw a)$ is the
least element of $\sbC$.  Note that $\neg(f\bcdw a)<\neg(f^2)$, as $f^2< f\bcdw a$.  Now $\sbC$ is rigorously compact, by
Corollary~\ref{rigorously compact generation}, so $\neg(f^2)\bcdw(f\bcdw a)=f\bcdw a>f^2$.  Thus, $\neg(f^2)\bcdw(f\bcdw a)\nleqslant a$,
as $f^2\nleqslant a$.

Nevertheless, as $\neg(f^2)\bcdw f=\neg(f^2)$, we have $\text{$(\neg(f^2)\bcdw f)\bcdw a=\neg(f^2)\bcdw a\leqslant a$}$, because
$\neg(f^2)\leqslant e$.  This contradicts the associativity of fusion in $\sbA$.  Therefore, $A=(\neg(f^2)]\cup[\neg(f^2),f^2]\cup[f^2)$.
\end{proof}

Recall from (\ref{square increasing cor2}) that fusion and meet coincide on the lower bounds of $e$ in any De Morgan monoid.
For the algebras in Theorem~\ref{lollipop 2}, the behaviour of fusion is further constrained as follows.

\begin{thm}\label{main structure}
Let\/ $\sbA$ be a non-idempotent FSI De Morgan monoid,
and let\/ $f\leqslant a,b\in A$\textup{.}  Then
\begin{equation*}
a \bcdw b \; = \; \left\{ \begin{array}{ll}
                           f^2   & \text{if\, $a,b \leqslant f^2$\textup{;}} \\
                           \textup{max}_{\,\leqslant}\{a,b\} & \text{otherwise.}
                           \end{array}
                   \right.
\end{equation*}
If, moreover, $a<b$ and $f^2\leqslant b$\textup{,}
then\/ $a\bcdw\neg b=\neg b=b\bcdw\neg b$ and\/ $b\bcdw\neg a=b$\textup{.}
\end{thm}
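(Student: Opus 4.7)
The plan is to identify the extrema of the finitely generated subalgebra $\sbB\seteq\boldsymbol{\mathit{Sg}}^\sbA \{a, b\}$, which is rigorously compact by Corollary~\ref{rigorously compact generation}, and then to read off the claimed identities from the absorbing laws in Lemmas~\ref{bounds} and~\ref{rigorously compact}.

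For the fusion formula, I would first handle the case $a, b \leqslant f^2$ by sandwiching: isotonicity gives $f^2 = f \bcdw f \leqslant a \bcdw b$, while $a \bcdw b \leqslant f^2 \bcdw f^2 = f^4 = f^2$ by Lemma~\ref{cube}, so $a \bcdw b = f^2$. Otherwise, by the symmetry of the claim I may assume $b \not\leqslant f^2$; Theorem~\ref{lollipop 2} then forces $b > f^2$, and $b$ is idempotent by Theorem~\ref{lollipop}. The elements $a, b$ are now comparable: either $a \leqslant f^2 < b$ directly, or $a > f^2$ is also idempotent and so Lemma~\ref{idempotents linear} applies. By symmetry again, take $a \leqslant b$. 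Then the bound $c = e \vee f \vee a \vee \neg a \vee b \vee \neg b$ supplied by Theorem~\ref{bounded} for $\sbB$ collapses to $c = b$, because $b \geqslant e, f, a$ while $\neg a, \neg b \leqslant e \leqslant b$; hence $\top_\sbB = c^2 = b^2 = b$ and $\bot_\sbB = \neg b$. Since $\neg b \leqslant \neg(f^2) < f \leqslant a$ (using $e < f^2$ from Lemma~\ref{idempotents above f}), the element $a$ differs from $\bot_\sbB$, and rigorous compactness yields $a \bcdw b = b = \max\{a, b\}$.

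For the second assertion, the hypotheses $a < b$ and $f^2 \leqslant b$ place us in the same setup, so $\top_\sbB = b$ and $\bot_\sbB = \neg b$ still. Lemma~\ref{bounds} tells us that $\bot_\sbB = \neg b$ absorbs fusion in $\sbB$, which at once gives $a \bcdw \neg b = \neg b = b \bcdw \neg b$. And $b \bcdw \neg a = b$ follows from Lemma~\ref{rigorously compact}, because $\neg a \neq \bot_\sbB = \neg b$ (owing to $a \neq b$).

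The one nontrivial step I anticipate is verifying that the bound from Theorem~\ref{bounded} collapses exactly to $b$, together with the little inequality $\neg b < f$ used to distinguish $a$ from $\bot_\sbB$; after these, every remaining identity is a one-line consequence of the extremal laws in a rigorously compact IRL.
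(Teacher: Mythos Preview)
Your proof is correct and follows essentially the same strategy as the paper: sandwich for the case $a,b\leqslant f^2$, then identify $b$ and $\neg b$ as the extrema of $\boldsymbol{\mathit{Sg}}^\sbA\{a,b\}$ via Theorem~\ref{bounded}, and read everything off from rigorous compactness (Corollary~\ref{rigorously compact generation}). The paper obtains comparability of $a,b$ directly from Theorem~\ref{lollipop} rather than via Theorem~\ref{lollipop 2} plus Lemma~\ref{idempotents linear}, and it verifies $a\neq\neg b$ from $\neg a<b$ rather than from $\neg b<f\leqslant a$, but these are cosmetic variations; the architecture is the same. One stylistic note: your two successive ``by symmetry'' reductions would read more cleanly if you first argued comparability and \emph{then} named $b$ as the larger element (which is automatically ${}\not\leqslant f^2$), as the paper does---but your version is not wrong, since in the case $b<a$ both elements exceed $f^2$ and the roles can still be exchanged.
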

\begin{proof}
If $a,b\leqslant f^2$, then $f^2\leqslant a\bcdw b\leqslant f^4=f^2$, by (\ref{isotone}) and
Lemma~\ref{cube}, so $a\bcdw b=f^2$.
We may therefore assume (in respect of the first claim) that $a\not\leqslant f^2$ or $b\not\leqslant f^2$.
Then $a$ and $b$ are comparable, by Theorem~\ref{lollipop}.
By symmetry, we may assume that $a\leqslant b$ and
hence
that $b\not\leqslant f^2$, so $e<f^2<b=b^2$, by Theorems~\ref{interval subalgebra} and \ref{lollipop}.

If $a=b$, then
$a\bcdw b=b^2=b=\textup{max}_{\,\leqslant}\{a,b\}$, so we may assume that $a\neq b$.
Thus, $b>a\geqslant f$, and so $\neg b<\neg a\leqslant e< b$.

As $b$ is an idempotent upper bound of $e,f,a,\neg a,\neg b$,
Theorem~\ref{bounded} shows that $b$ is the greatest element of ${\boldsymbol{\mathit{Sg}}}^\sbA\{a,b\}$,
and $\neg b$ is the least element.

By Corollary~\ref{rigorously compact generation},
${\boldsymbol{\mathit{Sg}}}^\sbA\{a,b\}$ is rigorously compact.
We shall therefore have $a\bcdw b=b=\textup{max}_{\,\leqslant}\{a,b\}$, provided that $\neg b\neq a$.
This is indeed the case, as we have seen that $\neg a < b$.

Finally, suppose $a<b$ and $f^2\leqslant b$.  Again, Theorems~\ref{interval subalgebra} and
\ref{lollipop} show that $\neg b,b$ are the (idempotent) extrema of the algebra
${\boldsymbol{\mathit{Sg}}}^\sbA\{a,b\}$, whose non-extreme elements include $\neg a,a$, so the
remaining claims also follow from the rigorous compactness of ${\boldsymbol{\mathit{Sg}}}^\sbA\{a,b\}$.
\end{proof}

\begin{remk}\label{interpretation}
\emph{The foregoing results imply that, for an FSI\, De Morgan monoid $\sbA$, there are just two possibilities.}

\emph{The first is that
$f<e$, in which case, by Theorems~\ref{idempotence f and e} and \ref{splitting}
and Lemmas~\ref{fsi si simple}(\ref{si}) and \ref{sug fsi},
$\sbA$ is a totally ordered SI \,Sugihara monoid whose fusion resembles that of $\sbS^*$, because the latter
operation is definable
by universal first order sentences, and because $\sbA\in\mathbb{ISP}_\mathbb{U}(\sbS^*)$.  (See the remarks preceding
Definition~\ref{sug def} and recall
that the absolute value function on $\sbS^*$ is the term function of $x\rig x$.)  The improvement here on $\sbA\in\mathbb{HSP}_\mathbb{U}(\sbS^*)$ is due to the assumption $f<e$.  Indeed, a nontrivial congruence on any $\sbB\in\mathbb{SP}_\mathbb{U}(\sbS^*)$ must identify $f$ with $e$, because
$e$ covers $f$ in $\sbS^*$, and therefore in $\sbB$.}

\emph{The second possibility is that $\sbA$ is the `rigorous extension' of its anti-idempotent subalgebra (on $[\neg(f^2),f^2]$) by
an (idempotent) totally ordered odd Sugihara monoid.  More precisely, in this case, if $\theta=\leibniz\,[\neg(f^2))$, then
$\sbA/\theta$
is a totally ordered odd Sugihara monoid (and is therefore determined by its $e,\leqslant$ reduct),
while
$[\neg(f^2),f^2]$ is the congruence class $e/\theta$
and
no two distinct non-elements of $[\neg(f^2),f^2]$ are identified by $\theta$ (an easy consequence of Theorem~\ref{main structure}).
Thus, when $\neg(f^2)$ and $f^2$ are identified in $(\neg (f^2)]\cup[f^2)$, we obtain a copy of
$\langle A/\theta;\leqslant\rangle$.  Both $\sbA/\theta$ and the algebra on $[\neg(f^2),f^2]$ are FSI, by
Lemma~\ref{fsi si simple}(\ref{fsi}), and may be trivial.
By the last assertion of Theorem~\ref{idempotence f and e},
$\sbA/\theta$ is not a retract of $\sbA$, unless
$\sbA$ is odd (i.e., $\theta=\textup{id}_A$).
There is no further constraint on $[\neg(f^2),f^2]$, while the $e,\leqslant$ reduct
of $\sbA/\theta$ may be any chain with a self-inverting antitone bijection, having a fixed point.
In fact, $\sbA$ is a directed union of algebras, each of which results from $[\neg(f^2),f^2]$ by taking
a rigorously compact two-point extension finitely many times.}

\emph{This largely reduces the study of irreducible De Morgan monoids to the anti-idempotent case.}\hfill{$\Box$}
\end{remk}

We depict below the two-element Boolean algebra $\mathbf{2}$ ($=\sbS_2$), the three-element Sugihara monoid $\sbS_3$, and two $0$--generated four-element
De Morgan monoids, $\sbC_4$ and $\sbD_4$.  In each case, the labeled Hasse diagram determines the structure, in view of Lemma~\ref{bounds},
Theorem~\ref{dm fsi rigorously compact} and the definitions.  That $\sbC_4$ and $\sbD_4$ are indeed De Morgan monoids was noted long ago in the relevance logic literature,
e.g., \cite{Mey83,Mey86}.
All four algebras are simple, by Lemma~\ref{fsi si simple}(\ref{simple}).

{\tiny

\thicklines
\begin{center}
\begin{picture}(80,60)(-28,51)

\put(-105,63){\line(0,1){30}}
\put(-105,63){\circle*{4}}
\put(-105,93){\circle*{4}}

\put(-101,91){\small $e$}
\put(-101,60){\small $f$}

\put(-122,80){\small $\mathbf{{2}\colon}$}

%
%

\put(-50,78){\circle*{4}}
\put(-50,63){\line(0,1){30}}
\put(-50,63){\circle*{4}}
\put(-50,93){\circle*{4}}

\put(-46,91){\small ${\top}$}
\put(-45,76){\small ${e}=f$}
\put(-45,61){\small ${\bot}$}

\put(-75,80){\small ${\sbS_3}\colon$}

%
%

\put(30,59){\circle*{4}}
\put(30,59){\line(0,1){39}}
\put(30,72){\circle*{4}}
\put(30,85){\circle*{4}}
\put(30,98){\circle*{4}}

\put(35,96){\small ${f^2}$}
\put(35,82){\small $f$}
\put(35,69){\small ${e}$}
\put(35,56){\small $\neg(f^2)$}

\put(2,80){\small ${\sbC_4}\colon$}

%
%

\put(120,65){\circle*{4}}
\put(135,80){\line(-1,-1){15}}
\put(135,80){\circle*{4}}
\put(105,80){\line(1,-1){15}}
\put(105,80){\circle*{4}}
\put(105,80){\line(1,1){15}}
\put(120,95){\circle*{4}}
\put(135,80){\line(-1,1){15}}

\put(122,99){\small ${f^2}$}
\put(95,78){\small ${e}$}
\put(140,78){\small $f$}
\put(118,55){\small $\neg(f^2)$}

\put(69,80){\small ${\sbD_4}\colon$}

\end{picture}\nopagebreak
\end{center}

}

The next theorem is implicit in the findings of Slaney \cite{Sla85,Sla89} mentioned after Lemma~\ref{integral implies boolean},
but it is easier here to give a self-contained proof.

\begin{thm}\label{0 gen simples}
Let\/ $\sbA$ be a simple\/ $0$--generated De Morgan monoid.  Then\/ $\sbA\cong\mathbf{2}$ or\/ $\sbA\cong\sbC_4$ or\/ $\sbA\cong\sbD_4$\textup{.}
\end{thm}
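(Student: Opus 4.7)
Since $\sbA$ is simple it is SI and hence FSI, and being $0$--generated (so finitely generated) Theorem~\ref{bounded} makes it bounded; the remark following that theorem identifies its extrema as $\top=(e\vee f)^2=f^2\vee e$ and $\bot=f\wedge\neg(f^2)$. My plan is to split on whether $\sbA$ is idempotent (Theorem~\ref{idempotence f and e}) and, in each case, to use the simplicity criterion of Lemma~\ref{fsi si simple}(\ref{simple}) --- that $e$ has a unique strict lower bound --- to force $\sbA$ to have at most four elements.

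In the idempotent case, $f\leqslant e$ by Theorem~\ref{idempotence f and e}, so $\top=e$ and $\bot=f$. Every element of $\sbA$ therefore lies in $[f,e]$, but Lemma~\ref{fsi si simple}(\ref{simple}) forbids any element strictly between $\bot=f$ and $e$. Thus $\sbA=\{f,e\}\cong\mathbf{2}$.

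In the non-idempotent case, Lemma~\ref{idempotents above f} gives $f^2>e$, so $\top=f^2$. Because $f^2\neq f$ (Theorem~\ref{idempotence f and e}) we have $\neg(f^2)<e$, and Theorem~\ref{splitting} then forces $\neg(f^2)\leqslant f$, whence $\bot=\neg(f^2)$. Theorem~\ref{lollipop 2} now shows $\sbA=[\bot,\top]$, since the two idempotent chains $(\bot]$ and $[\top)$ degenerate to singletons. Simplicity applied to $e\wedge f\leqslant e$ yields a dichotomy: either $e\wedge f=e$, in which case $e<f$, $e\vee f=f$, and the elements $\bot<e<f<\top$ form a chain; or $e\wedge f=\bot$, so that $e,f$ are incomparable and the involutional dual of Lemma~\ref{fsi si simple}(\ref{simple}), applied to $f$, gives $e\vee f=\top$. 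In each sub-case $\{\bot,e,f,\top\}$ is closed under $\neg,\wedge,\vee$ obviously, and under $\cdot$ by neutrality of $e$, $f^2=\top$, Lemma~\ref{bounds}, and rigorous compactness (Theorem~\ref{dm fsi rigorously compact}); thus $\sbA$, being $0$--generated, equals that four-element set, giving $\sbC_4$ and $\sbD_4$ respectively. The main obstacle is the clean identification $\bot=\neg(f^2)$, which hinges on combining non-idempotence with Theorem~\ref{splitting}; once that and the simplicity dichotomy on $e\wedge f$ are in place, the enumeration of operations is mechanical.
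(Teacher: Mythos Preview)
Your proof is correct and follows essentially the same line as the paper's: identify the extrema as $\neg(f^2)$ and $f^2$, split on the order relation between $e$ and $f$, and verify that $\{\neg(f^2),e,f,f^2\}$ is a subalgebra, hence all of $\sbA$ by $0$--generation.

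A few differences are worth noting. First, your handling of the idempotent case is more direct than the paper's: you compute the extrema as $e,f$ and invoke simplicity, whereas the paper routes through Lemma~\ref{integral implies boolean} and the classification of simple Boolean algebras. Second, you invoke heavier machinery than necessary. The appeal to Theorem~\ref{lollipop 2} is vacuous, since $\sbA=[\bot,\top]$ holds trivially once $\bot,\top$ are identified as extrema; and once $\top=f^2$ is known, $\bot=\neg\top=\neg(f^2)$ follows immediately from the involution without the detour through Theorem~\ref{splitting}. Similarly, the paper obtains $\neg(f^2)<e$ directly from $f<f^2$ rather than citing Lemma~\ref{idempotents above f}. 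None of these citations are circular (all precede Theorem~\ref{0 gen simples} in the paper), but trimming them would make your argument as self-contained as the paper's.
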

\begin{proof}
Because $\sbA$ is simple (hence nontrivial) and $0$--generated, $\{e\}$ is not a subuniverse of $\sbA$, so $e\neq f$ and
$e$ has just one strict lower bound in $\sbA$ (Lemma~\ref{fsi si simple}(\ref{simple})).  Suppose $\sbA\not\cong\mathbf{2}$.
As every simple Boolean algebra is isomorphic to $\mathbf{2}$, Lemma~\ref{integral implies boolean} shows that $\sbA$ is not integral.
Equivalently, $f$ is not the least element of $\sbA$, so $f\nleqslant e$.  Then by Theorem~\ref{idempotence f and e}, $\sbA$ is not
idempotent and
$f<f^2$, hence $\neg(f^2)<e$, so
$\neg(f^2)$ is the least
element of $\sbA$, i.e., $f^2$ is the greatest element.  Consequently, $a\bcdw\neg(f^2)=\neg(f^2)$ for all $a\in A$, by Lemma~\ref{bounds}, and
$a\bcdw f^2=f^2$ whenever $\neg(f^2)\neq a\in A$, by Theorem~\ref{dm fsi rigorously compact}.

There are two possibilities for the order: $e<f$ or $e\not\leqslant f$.  If $e\not\leqslant f$, then $e\wedge f<e$, whence $e\wedge f$ is
the extremum $\neg(f^2)$ and, by De Morgan's laws, $e\vee f=f^2$.  Otherwise, $\neg(f^2)<e<f<f^2$.  Either way, $\{\neg(f^2),e,f,f^2\}$
is the universe of a four-element subalgebra of $\sbA$, having no proper subalgebra of its own, so $A=\{\neg(f^2),e,f,f^2\}$, as
$\sbA$ is $0$--generated.  Thus, $\sbA\cong\sbC_4$ if $e<f$, and
$\sbA\cong\sbD_4$ if $e\not\leqslant f$.
\end{proof}

We remark that both $\mathbb{V}(\sbC_4)$ and $\mathbb{V}(\sbD_4)$ are categorically equivalent to the variety $\mathbb{V}(\mathbf{2})$ of all Boolean algebras.
(Equivalently, $\sbC_4$ and $\sbD_4$ are primal algebras, as they generate arithmetical varieties and are finite, simple and lack proper
subalgebras and nontrivial automorphisms; see \cite{FP64a,Hu69,McK96}.)

\begin{thm}\label{omit c4 d4}
A variety\/ $\mathsf{K}$ of De Morgan monoids consists of Sugihara monoids iff it excludes\/ $\sbC_4$ and\/ $\sbD_4$\textup{.}
\end{thm}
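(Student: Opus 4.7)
The forward direction is immediate: Sugihara monoids are idempotent, whereas $f<f^2$ in both $\sbC_4$ and $\sbD_4$, so these algebras belong to no subvariety of $\mathsf{SM}$.

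For the converse, I would argue contrapositively. Suppose some $\sbA\in\mathsf{K}$ fails the equation $x=x^2$; the plan is to place $\sbC_4$ or $\sbD_4$ inside $\mathsf{K}$. Let $\sbB\seteq\boldsymbol{\mathit{Sg}}^\sbA\emptyset$. The terms $f$ and $f^2$ take the same values in $\sbB$ as in $\sbA$, so $f\neq f^2$ in $\sbB$, whence $\sbB$ is non-idempotent by Theorem~\ref{idempotence f and e}. Slaney's theorem on the finiteness of the free $0$--generated De Morgan monoid then makes $\sbB$ finite. By Birkhoff's theorem, $\sbB$ is a subdirect product of SI quotients, each of them $0$--generated (as homomorphic images of $\sbB$). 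Since $f=f^2$ fails in $\sbB$, it fails in at least one factor, yielding a $0$--generated SI non-idempotent $\sbB^*\in\mathsf{K}$.

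The decisive step is to show that $\neg(f^2)$ is the bottom element of $\sbB^*$. By Theorem~\ref{Sugihara factor 2}, applied to the deductive filter $[\neg(f^2))$, the quotient $\sbB^*/[\neg(f^2))$ is an odd Sugihara monoid; being a homomorphic image of the $0$--generated $\sbB^*$, it is itself $0$--generated. But in any odd Sugihara monoid $e=f$, so the smallest subalgebra is $\{e\}$, which is trivial. Hence $\sbB^*/[\neg(f^2))$ is trivial, meaning $[\neg(f^2))=\sbB^*$, i.e.\ $\neg(f^2)=\bot$ in $\sbB^*$.

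Since $\sbB^*$ is finite and nontrivial, it admits a maximal proper congruence $\theta$, whose quotient $\sbB^*/\theta$ is both simple and $0$--generated. By Theorem~\ref{0 gen simples}, $\sbB^*/\theta$ is isomorphic to $\mathbf{2}$, $\sbC_4$, or $\sbD_4$. If it were $\mathbf{2}$, then $f/\theta=\bot\leqslant\top=e/\theta$, so by~(\ref{factor order}) the associated deductive filter $G_\theta$ would contain $f\rig e=\neg(f^2)=\bot$, forcing $G_\theta=\sbB^*$ and trivialising $\sbB^*/\theta$---absurd. Hence $\sbB^*/\theta\cong\sbC_4$ or $\sbD_4$, placing one of these algebras in $\mathsf{K}$, contrary to hypothesis. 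The main obstacle is ruling out $\mathbf{2}$ as the simple quotient, and this is overcome through the triviality of $0$--generated odd Sugihara monoids, which forces $\neg(f^2)$ to the bottom of $\sbB^*$.
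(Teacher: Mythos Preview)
Your proof is correct. Both arguments share the same skeleton: isolate a $0$--generated non-idempotent algebra in $\mathsf{K}$, pass to a simple $0$--generated quotient, invoke Theorem~\ref{0 gen simples}, and rule out $\mathbf{2}$ via anti-idempotence. The difference is in how anti-idempotence is obtained. The paper starts from an SI $\sbA\in\mathsf{K}$ and uses the interval-subalgebra result (Theorem~\ref{interval subalgebra}) to see that the $0$--generated subalgebra lies inside $[\neg(f^2),f^2]$ and hence satisfies $x\leqslant f^2$; this identity then excludes $\mathbf{2}$ from $\mathbb{HS}$ of that subalgebra. You instead pass to an SI $0$--generated quotient $\sbB^*$ and derive $\neg(f^2)=\bot$ there from Theorem~\ref{Sugihara factor 2}: the quotient $\sbB^*/[\neg(f^2))$ is an \emph{odd} Sugihara monoid, and a $0$--generated odd algebra is trivial, forcing $[\neg(f^2))=B^*$. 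Your route is attractive in that it bypasses the structural machinery of Theorems~\ref{interval subalgebra}--\ref{lollipop 2} and Remark~\ref{interpretation}, resting only on the earlier Theorem~\ref{Sugihara factor 2}.

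One minor streamlining: Slaney's finiteness of the free $0$--generated De Morgan monoid is not actually needed. Birkhoff's subdirect decomposition applies to any algebra, and the existence of a simple homomorphic image of $\sbB^*$ follows from the general fact (cited in the paper's proof) that every nontrivial finitely generated algebra of finite type has one, since $\sbB^*$ is $0$--generated. Dropping the appeal to finiteness leaves your argument intact and slightly leaner.
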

\begin{proof}
Necessity is clear.  Conversely, suppose $\sbC_4,\sbD_4\notin\mathsf{K}$ and let $\sbA\in\mathsf{K}$ be SI.
It suffices to show that $\sbA$ is a Sugihara monoid.  Suppose not.  Then, by Theorem~\ref{interval subalgebra}
and Remark~\ref{interpretation}, $\neg(f^2)<f^2$ and
the subalgebra $\sbB$ of $\sbA$ on $[\neg(f^2),f^2]$ is nontrivial, whence the $0$--generated subalgebra $\sbE$ of $\sbA$
is nontrivial.
Recall that
every nontrivial finitely generated algebra of finite type has a simple homomorphic image
\cite[Cor.~4.1.13]{Jon72}.  Let $\sbG$ be a simple homomorphic image of $\sbE$, so $\sbG\in\mathsf{K}$.
By assumption, neither $\sbC_4$ nor $\sbD_4$ is isomorphic to $\sbG$, but $\sbG$ is $0$--generated,
so $\mathbf{2}\cong\sbG$, by Theorem~\ref{0 gen simples}.  Thus, $\mathbf{2}\in\mathbb{HS}(\sbB)$.  Then
$\mathbf{2}$ must inherit from $\sbB$ the anti-idempotent identity $x\leqslant f^2$.
This is false, however, so $\sbA$ is a Sugihara monoid.
\end{proof}

In what follows, some features of $\sbC_4$ will be important.

\begin{lem}\label{pre m}
Let\/ $\sbA$ be a nontrivial square-increasing IRL.
\begin{enumerate}
\item\label{pre m 1}
If\/ $e\leqslant f$ and\/ $a\leqslant f^2$ for all\/ $a\in A$\textup{,}
then\/
$e<f$\textup{.}

\smallskip

\item\label{pre m 0}
If\/ $e<f$ in\/ $\sbA$\textup{,} then\/ $\sbC_4$ can be embedded into\/
$\sbA$\textup{.}
\end{enumerate}
\end{lem}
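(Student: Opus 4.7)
The plan is to handle the two parts separately. Part (i) is an easy contradiction argument exploiting the fact that $e=f$ would force idempotence in the presence of the square-increasing law. Part (ii) amounts to exhibiting the explicit four-element subalgebra $\{\neg(f^2),e,f,f^2\}$ of $\sbA$ and identifying it with $\sbC_4$.

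For (i), I would suppose for contradiction that $e=f$. Then $\sbA$ is odd, hence idempotent by Theorem~\ref{idempotence f and e}, so $f^2=f=e$. The hypothesis $a\leqslant f^2$ for all $a$ then says $e$ is the top element of $\sbA$. Since $\neg$ is antitone and $\neg e=e$, every $\neg a$ satisfies $e=\neg e\leqslant \neg a$, and combined with maximality of $e$ this gives $\neg a=e$, so $a=\neg\neg a=\neg e=e$ for all $a$. Thus $\sbA$ is trivial, contradicting the hypothesis.

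For (ii), set $S\seteq\{\neg(f^2),e,f,f^2\}$. The plan is to show these four elements are distinct, form a chain closed under the operations, and produce the fusion table of $\sbC_4$. Distinctness: the square-increasing law gives $f\leqslant f^2$, and $f=f^2$ would force $f\leqslant e$ by Theorem~\ref{idempotence f and e}, contradicting $e<f$; hence $f<f^2$, and applying $\neg$ yields $\neg(f^2)<e<f<f^2$. The chain structure immediately gives closure under $\wedge$ and $\vee$, and $\neg$ swaps $e\leftrightarrow f$ and $\neg(f^2)\leftrightarrow f^2$.

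The substantive step is closure and agreement under fusion. The element $e$ acts as identity. By Lemma~\ref{cube}, $f^3=f^2$, so $f\bcdw f=f^2$, $f\bcdw f^2=f^3=f^2$, and $f^2\bcdw f^2=f^4=f^2$. For the bottom, $\neg(f^2)\leqslant e$ (apply $\neg$ to $e\leqslant f^2$), so (\ref{square increasing cor2}) gives $\neg(f^2)\bcdw\neg(f^2)=\neg(f^2)$. For $x\in\{f,f^2\}$, the law (\ref{involution-fusion law}) converts $\neg(f^2)\bcdw x\leqslant\neg(f^2)$ into $f^2\bcdw x\leqslant f^2$, which holds since $f^2\bcdw x\in\{f^3,f^4\}=\{f^2\}$; the reverse $\neg(f^2)\leqslant\neg(f^2)\bcdw x$ follows from $e\leqslant x$ by (\ref{isotone}). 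This matches the fusion of $\sbC_4$ exactly, so the labeling $\neg(f^2)\mapsto\neg(f^2)$, $e\mapsto e$, etc.\ is the desired embedding.

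The main obstacle is the verification that $\neg(f^2)$ genuinely behaves as the absorbing bottom of $\sbC_4$ inside $\sbA$, since $\sbA$ itself need not be bounded; this is handled by the involution-fusion law together with Lemma~\ref{cube}, so no new idea beyond careful bookkeeping is required.
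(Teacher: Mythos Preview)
Your proof is correct and follows essentially the same approach as the paper. The only cosmetic differences are that for (i) you unpack the triviality argument directly rather than invoking Corollary~\ref{sug cor}, and for (ii) you compute $\neg(f^2)\bcdw x=\neg(f^2)$ via the involution--fusion law (\ref{involution-fusion law}) rather than citing Lemma~\ref{3 conditions}; both routes yield the same verifications of the same four-element subalgebra.
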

\begin{proof}
(\ref{pre m 1}) \,Suppose $\sbA$ satisfies $e\leqslant f$ and $x\leqslant f^2$.  Then $\sbA$
is not idempotent, by Corollary~\ref{sug cor}, so $f\neq e$, by Theorem~\ref{idempotence f and e}, i.e.,
$e<f$.

(\ref{pre m 0}) \,Suppose $e<f$ in $\sbA$.  Then $f<f^2$, by Theorem~\ref{idempotence f and e}, i.e., $\neg(f^2)<e$.   Thus, $\{\neg(f^2),e,f,f^2\}$
is closed under $\wedge,\vee$ and $\neg$, and $\neg(f^2)$ is idempotent, by (\ref{square increasing cor2}).
By Lemma~\ref{cube}, $f^2$ is an idempotent upper bound of $e$,
so $f^2\bcdw\neg(f^2)=\neg(f^2)$, by Lemma~\ref{3 conditions}.
Closure of $\{\neg(f^2),e,f,f^2\}$ under fusion follows from these
observations and (\ref{isotone}), so $\sbC_4$ embeds into $\sbA$.
\end{proof}

\begin{thm}\label{slaney onto c4}
\textup{(Slaney \cite[Thm.~1]{Sla89})}
\,Let\/ $h\colon\sbA\mrig\sbB$ be a homomorphism, where\/ $\sbA$ is an FSI \,De Morgan monoid, and\/ $\sbB$ is nontrivial
and\/ $0$--generated.  Then\/ $h$ is an isomorphism or\/ $\sbB\cong\sbC_4$\textup{.}
\end{thm}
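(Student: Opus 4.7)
The plan is to exploit the $0$-generation of $\sbB$ in two ways: first to see that $h$ is automatically surjective, and second to pin down the isomorphism type of $\sbB$ when $h$ fails to be injective. Because $h$ is an IRL homomorphism, $h[\sbA]$ is a subalgebra of $\sbB$ containing $h(e^\sbA)=e^\sbB$; as $\sbB$ is $0$-generated, this forces $h[\sbA]=\sbB$, so $h$ is surjective. Suppose now that $h$ is not an isomorphism, hence not injective. By the remark following (\ref{t reg}) in Section~\ref{residuated structures section}, there exists $c\in A$ with $c<e$ and $h(c)=e$. Since $\sbA$ is FSI, Theorem~\ref{splitting} yields $c\leqslant f$.

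I would next rule out the case $c=f$. If $c=f$, then $h(f)=e$, so $e=f$ in $\sbB$; but in any odd IRL, $\neg e=e$ and $e\bcdw e=e$, making $\{e\}$ a subuniverse, so the $0$-generated $\sbB$ would be trivial, contradicting the hypothesis. Hence $c<f$. As $h$ preserves the lattice order (being a homomorphism of $\wedge$), applying $h$ to $c\leqslant f$ gives $e\leqslant f$ in $\sbB$, and the same oddness argument precludes $e=f$ in $\sbB$; thus $e<f$ in $\sbB$.

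At this point Lemma~\ref{pre m}(\ref{pre m 0}) supplies an embedding $\sbC_4\hookrightarrow\sbB$. Its image is a subalgebra of $\sbB$ containing $e^\sbB$, and since $\sbC_4$ itself is $0$-generated, this image coincides with the unique $0$-generated subalgebra of $\sbB$. But $\sbB$ is $0$-generated, so the image must be all of $\sbB$, giving $\sbB\cong\sbC_4$. I do not foresee any serious obstacle beyond the bookkeeping in the odd-case exclusion; the argument is a short chain of Theorem~\ref{splitting}, Lemma~\ref{pre m}(\ref{pre m 0}) and the easy observation that a nontrivial $0$-generated De Morgan monoid cannot be odd.
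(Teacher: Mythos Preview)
Your proof is correct and follows essentially the same route as the paper's: surjectivity from $0$--generation, then Theorem~\ref{splitting} to get $e\leqslant f$ in $\sbB$, the oddness obstruction to rule out $e=f$, and Lemma~\ref{pre m}(\ref{pre m 0}) to finish. Your separate treatment of the case $c=f$ is harmless but redundant---the paper simply passes directly from $c\leqslant f$ to $e\leqslant f$ in $\sbB$ and then invokes the same oddness argument once.
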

\begin{proof}
As $\sbB$ is
$0$--generated, $h$ is surjective.  Suppose $h$ is not an isomorphism.  By the remarks preceding
Lemma~\ref{bounds},
$h(a)=e$ for some $a\in A$ with $a<e$.
By Theorem~\ref{splitting}, $a\leqslant f$, so $h(a)\leqslant h(f)$, i.e., $e
\leqslant f$ in $\sbB$.  As $\sbB$ is $0$--generated but not trivial, it cannot
satisfy $e=f$, so $e<f$ in $\sbB$.  Then $\sbC_4$ embeds into $\sbB$, by Lemma~\ref{pre m}(\ref{pre m 0}),
so $\sbB\cong\sbC_4$, again because $\sbB$ is $0$--generated.
\end{proof}

\section{Minimality}\label{minimality section}

A quasivariety is said to be {\em minimal\/} if it is nontrivial and has no nontrivial proper subquasivariety.  If we say that a variety is {\em minimal\/} (without
further qualification), we mean that it is nontrivial and has no nontrivial proper subvariety.  When we mean instead that it is {\em minimal as a quasivariety}, we shall say
so explicitly, thereby avoiding ambiguity.

\begin{thm}\label{atoms}
The distinct classes\/ $\mathbb{V}(\mathbf{2})$\textup{,} $\mathbb{V}(\sbS_3)$\textup{,} $\mathbb{V}(\sbC_4)$ and\/ $\mathbb{V}(\sbD_4)$
are precisely the minimal varieties of De Morgan monoids.
\end{thm}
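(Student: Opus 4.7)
The plan is to show, first, that each of $\mathbb{V}(\mathbf{2})$, $\mathbb{V}(\sbS_3)$, $\mathbb{V}(\sbC_4)$, $\mathbb{V}(\sbD_4)$ is minimal, then that they are pairwise distinct, and finally that no other minimal variety of De Morgan monoids exists.

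For minimality of each $\mathbb{V}(\sbA)$ with $\sbA$ one of the four algebras, I would use the fact that $\sbA$ is finite, simple, and $0$--generated. Being $0$--generated, $\sbA$ has no proper subalgebra; being simple, it has no nontrivial proper homomorphic image. Jónsson's Theorem (as recalled in the excerpt), together with $\mathbb{P}_\mathbb{U}(\{\sbA\})\subseteq\mathbb{I}(\sbA)$ for a single finite algebra, gives $\mathbb{V}(\sbA)_\textup{FSI}\subseteq\mathbb{HS}(\sbA)$, which contains only (copies of) the trivial algebra and $\sbA$. Any nontrivial subvariety of $\mathbb{V}(\sbA)$ must contain a nontrivial FSI member, hence must contain $\sbA$ itself, and therefore coincides with $\mathbb{V}(\sbA)$.

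For pairwise distinctness, I would invoke separating identities. $\mathbf{2}$ satisfies $x\leqslant e$ (integrality), while each of $\sbS_3, \sbC_4, \sbD_4$ has $e<f^2$, so $\mathbb{V}(\mathbf{2})$ differs from the other three. Next, $\sbS_3$ is idempotent (indeed odd), whereas $\sbC_4$ and $\sbD_4$ are anti-idempotent in view of Corollary~\ref{sug cor}, separating $\mathbb{V}(\sbS_3)$ from $\mathbb{V}(\sbC_4)$ and $\mathbb{V}(\sbD_4)$. Finally, in $\sbC_4$ we have $e\leqslant f$ (i.e., $e\vee f=f$), while in $\sbD_4$ the elements $e$ and $f$ are incomparable, so $e\vee f=f^2\neq f$; hence $\mathbb{V}(\sbC_4)\neq\mathbb{V}(\sbD_4)$.

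For completeness, let $\mathsf{K}$ be an arbitrary minimal variety of De Morgan monoids. If $\sbC_4\in\mathsf{K}$ or $\sbD_4\in\mathsf{K}$, then $\mathsf{K}$ contains the corresponding nontrivial subvariety $\mathbb{V}(\sbC_4)$ or $\mathbb{V}(\sbD_4)$, and minimality gives equality. Otherwise $\mathsf{K}$ excludes both $\sbC_4$ and $\sbD_4$, so by Theorem~\ref{omit c4 d4}, $\mathsf{K}$ consists of Sugihara monoids. Using the observation stated just before Corollary~\ref{osm varieties}, every nontrivial variety of Sugihara monoids contains $\sbS_2=\mathbf{2}$ or $\sbS_3$; minimality of $\mathsf{K}$ then forces $\mathsf{K}=\mathbb{V}(\mathbf{2})$ or $\mathsf{K}=\mathbb{V}(\sbS_3)$.

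No single step presents a substantial obstacle, since the two nontrivial ingredients (Theorem~\ref{omit c4 d4} and the description of Sugihara monoid varieties from Theorem~\ref{sug thm} onwards) have already been established; the main point is to package them together with the standard Jónsson-style argument for minimality of varieties generated by a finite simple algebra with no proper subalgebra.
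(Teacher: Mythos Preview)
Your overall strategy matches the paper's: use J\'{o}nsson's Theorem for minimality, then invoke Theorem~\ref{omit c4 d4} together with the classification of Sugihara monoids for exhaustiveness. There is, however, a factual slip in the minimality step: you assert that each of the four generators is $0$--generated, but $\sbS_3$ is \emph{not} $0$--generated. In $\sbS_3$ one has $e=f$, so the $0$--generated subalgebra is the trivial algebra $\{e\}$; thus $\sbS_3$ has a proper subalgebra (the trivial one). The correct property, as the paper states it, is that $\sbS_3$ has no proper \emph{nontrivial} subalgebra. Your conclusion that the nontrivial members of $\mathbb{HS}(\sbS_3)$ are all isomorphic to $\sbS_3$ is still true, so the minimality argument survives with this minor repair.

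A secondary difference: for pairwise distinctness the paper simply observes that, by the same J\'{o}nsson argument, the SI members of $\mathbb{V}(\sbX)$ lie in $\mathbb{I}(\sbX)$, so $\mathbb{V}(\sbX)=\mathbb{V}(\sbY)$ would force $\sbX\cong\sbY$. Your equational separation also works, and is a perfectly good alternative, but note that your first separating claim ($e<f^2$ in $\sbS_3$) is false---in $\sbS_3$ one has $f^2=f=e$. A cleaner separation of $\mathbf{2}$ from $\sbS_3$ is that $\mathbf{2}$ satisfies $x\leqslant e$ while $\sbS_3$ does not.
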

\begin{proof}
Each $\sbX\in\{\mathbf{2},\sbS_3,\sbC_4,\sbD_4\}$ is finite and simple, with no proper nontrivial subalgebra, so the nontrivial members
of $\mathbb{HS}(\sbX)$ are isomorphic to $\sbX$.  Thus, the SI members of $\mathbb{V}(\sbX)$ belong to $\mathbb{I}(\sbX)$, by
J\'{o}nsson's Theorem, because $\mathsf{DMM}$ is a
congruence distributive variety.
As varieties are determined by their SI members, this shows that
$\mathbb{V}(\sbX)$ has no proper nontrivial subvariety, and that $\mathbb{V}(\sbX)\neq\mathbb{V}(\sbY)$ for distinct $\sbX,\sbY\in
\{\mathbf{2},\sbS_3,\sbC_4,\sbD_4\}$.  As $\mathbb{V}(\mathbf{2})$ and $\mathbb{V}(\sbS_3)$ are the only minimal varieties of
Sugihara monoids, Theorem~\ref{omit c4 d4} shows that they, together with $\mathbb{V}(\sbC_4)$ and $\mathbb{V}(\sbD_4)$,
are the only minimal subvarieties of $\mathsf{DMM}$.
\end{proof}

\begin{tightcenter}
\begin{picture}(120,170)(-60,-4)

\put(0,20){\circle*{4}}
\put(-16,44){\line(2,-3){16}}
\put(-16,44){\circle*{4}}
\put(-40,44){\line(5,-3){40}}
\put(-40,44){\circle*{4}}
\put(16,44){\line(-2,-3){16}}
\put(16,44){\circle*{4}}
\put(40,44){\line(-5,-3){40}}
\put(40,44){\circle*{4}}

\qbezier(-40,44)(-50,50)(-50,100)
\qbezier(-50,100)(-45,135)(0,140)

\qbezier(40,44)(50,50)(50,100)
\qbezier(50,100)(45,135)(0,140)

\put(0,140){\circle*{4}}
\put(-13,7){\small trivial}
\put(-60,30){\small $ \Vop(\sbC_4) $}
\put(35,30){\small $ \Vop(\sbS_3) $}
\put(-30,52){\small $ \Vop(\sbD_4) $}
\put(8,52){\small $ \Vop(\Alg{2}) $}
\put(-10,145){\small $ \Class{DMM} $}

\end{picture}
\end{tightcenter}

Bergman and McKenzie \cite{BM90} showed that every locally finite congruence modular minimal variety is also minimal as a quasivariety.
Thus, by Theorem~\ref{atoms},
$\mathbb{V}(\mathbf{2})$, $\mathbb{V}(\sbS_3)$, $\mathbb{V}(\sbC_4)$ and $\mathbb{V}(\sbD_4)$ are minimal
as quasivarieties.  (In a sequel paper \cite{MRW2}, we show that $\mathsf{DMM}$ has just 68 minimal subquasivarieties.)
With a view to axiomatizing the varieties in Theorem~\ref{atoms}, consider the following (abbreviated) equations. \enlargethispage{5pt}
\begin{align}
\label{eq:semilinear}&e\leqslant (x\rig y)\vee(y\rig x)\\
\label{eq:S3}&e \leqslant (x \to (y \vee \neg y)) \vee (y \wedge \neg y)\\
\label{eq:D42}&e \leqslant (f^2 \to x) \vee (x \to e) \vee \neg x\\
\label{eq:C41}&x \wedge (x \to f) \leqslant (f \to x) \vee (x \to e) \\
\label{eq:C42}&x \to e \leqslant x \vee (f^2 \to \neg x)
\end{align}
It is shown in \cite{HRT02} that an [I]RL $\sbA$ is semilinear (i.e., a subdirect product of chains) iff it is distributive and satisfies (\ref{eq:semilinear}).

\begin{thm}\
\label{thm:axiomatization}
\begin{enumerate}
\item \label{thm:axiomatization:2} $\Vop(\Alg{2})$ is axiomatized by adding\/ $x \leqslant e$ to the axioms of\/ $\Class{DMM}$\textup{;}

\smallskip

\item \label{thm:axiomatization:S3} $\Vop(\sbS_3)$ by adding\/ $e = f$\textup{,  (\ref{eq:semilinear})} and\/ \textup{(\ref{eq:S3});}

\smallskip

\item \label{thm:axiomatization:D4} $\Vop(\sbD_4)$ by adding\/ $x \leqslant f^2$\textup{,}\, $x \wedge \neg x \leqslant y$ and\/ \textup{(\ref{eq:D42});}

\smallskip

\item \label{thm:axiomatization:C4} $\Vop(\sbC_4)$ by adding\/ $x \leqslant f^2$\textup{,}\, $e \leqslant f$\textup{, (\ref{eq:semilinear}), (\ref{eq:C41})} and\/ \textup{(\ref{eq:C42}).}\,\footnote{\,Of course, (\ref{thm:axiomatization:2}) is well known.  We have not encountered (\ref{thm:axiomatization:S3})--(\ref{thm:axiomatization:C4}) in the literature, but a variant of (\ref{thm:axiomatization:S3}) could be derived from \cite[Cor.~2]{Dun70}.}
\end{enumerate}
\end{thm}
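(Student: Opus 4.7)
The plan is to treat each item in two steps: verify that $X \in \{\Alg{2}, \sbS_3, \sbD_4, \sbC_4\}$ satisfies the listed equations (so that $\Vop(X) \subseteq \mathsf{K}$, where $\mathsf{K}$ denotes the subvariety of $\mathsf{DMM}$ axiomatized by them), and then show that every nontrivial FSI member of $\mathsf{K}$ is isomorphic to $X$ (yielding $\mathsf{K} \subseteq \Vop(X)$ via Birkhoff). The former verifications are routine Hasse-diagram calculations using Lemma~\ref{bounds}; the structural converse is where the content lies.

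Item (\ref{thm:axiomatization:2}) follows immediately from Lemma~\ref{integral implies boolean}, since $x \leqslant e$ is integrality. For (\ref{thm:axiomatization:S3}), $e = f$ places $\mathsf{K}$ inside $\mathsf{SM}$ via Theorem~\ref{odd cor}, and (\ref{eq:semilinear}) with distributivity forces every FSI member of $\mathsf{K}$ to be a chain, so by Corollary~\ref{osm varieties} it suffices to check that (\ref{eq:S3}) holds in $\sbS_3$ but fails in $\sbS_5$ (take $y = 1$, $x = 2$: the right-hand side evaluates to $-1 < e$), pinning $\mathsf{K}$ to $\Vop(\sbS_3)$. For (\ref{thm:axiomatization:D4}), anti-idempotence $x \leqslant f^2$ and Theorem~\ref{lollipop 2} reduce every nontrivial FSI $\sbA \in \mathsf{K}$ to its interval $[\neg(f^2), f^2]$, so $\sbA$ is bounded with extrema $\neg(f^2), f^2$; the axiom $x \wedge \neg x \leqslant y$ then forces $\neg(f^2)$ to be the minimum of $\sbA$, whence $e \wedge f = e \wedge \neg e = \neg(f^2)$. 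Since $\neg(f^2) < e, f$ (using Corollary~\ref{sug cor} to rule out $e = f$), this makes $e$ and $f$ incomparable. Applying the join-primeness of $e$ (Lemma~\ref{fsi si simple}(\ref{prime})) to (\ref{eq:D42}) shows that each $a \in A$ satisfies $a = f^2$, $a \leqslant e$, or $a \leqslant f$; the involutional dual yields $a = \neg(f^2)$, $a \geqslant e$, or $a \geqslant f$. Combining these disjunctions with the incomparability of $e$ and $f$ forces $A = \{\neg(f^2), e, f, f^2\}$, so $\sbA \cong \sbD_4$.

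Item (\ref{thm:axiomatization:C4}) is the main obstacle. Here every nontrivial FSI $\sbA \in \mathsf{K}$ is totally ordered (by semilinearity), anti-idempotent, and has $e < f$ strictly (Corollary~\ref{sug cor} again), so Lemma~\ref{pre m}(\ref{pre m 0}) embeds $\sbC_4$ into $\sbA$ with image the subchain $\{\neg(f^2), e, f, f^2\}$, and the task is to exclude any further $d \in A$. If $d \in (\neg(f^2), e)$, then since $\sbA$ is bounded and FSI it is rigorously compact by Theorem~\ref{dm fsi rigorously compact}, giving $f^2 \bcdw d = f^2$ and hence $f^2 \rig \neg d = \neg(f^2)$; then (\ref{eq:C42}) applied to $x = d$ collapses to $d \rig e \leqslant d$, contradicting $d \rig e \geqslant e > d$. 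The case $d \in (f, f^2)$ reduces to the previous one by applying (\ref{eq:C42}) instead to $x = \neg d$. For the middle case $d \in (e, f)$, one has $d \rig f = \neg d \in (e, f)$, so the left side of (\ref{eq:C41}) equals $d \wedge \neg d > e$; while $f \not\leqslant d$ and $d \not\leqslant e$ force $f \rig d, d \rig e < e$ in the chain, so the right side lies strictly below $e$, a contradiction. Therefore $A = \{\neg(f^2), e, f, f^2\}$ and $\sbA \cong \sbC_4$. The delicate residuation calculations underlying this case analysis are where the hard work lies; items (\ref{thm:axiomatization:2})--(\ref{thm:axiomatization:D4}) by contrast flow directly from the structural results of Sections~\ref{residuated structures section}--\ref{structure section}.
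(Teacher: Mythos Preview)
Your argument is correct, and the global strategy agrees with the paper's: in each case one checks the axioms in the target algebra, then shows that any nontrivial (F)SI model of the axioms is isomorphic to that algebra.  The tactical choices differ in a couple of places worth noting.

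For item~(\ref{thm:axiomatization:S3}) you take a genuinely different route.  The paper works directly with an SI model: using the monolith $a$ (the greatest strict lower bound of $e$) it observes that $[a,\neg a]=\{a,e,\neg a\}$, and then uses (\ref{eq:S3}) with join-primeness of $e$ to show that $\neg a$ has no strict upper bound, whence the algebra has exactly three elements.  Your argument instead places the axiomatized class inside the variety of odd Sugihara monoids and then exploits Corollary~\ref{osm varieties} (that chain of subvarieties) together with a single failure of (\ref{eq:S3}) in $\sbS_5$.  Your approach is conceptually cleaner and reuses existing structure theory, at the cost of depending on Corollary~\ref{osm varieties}; the paper's is more self-contained.

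For items~(\ref{thm:axiomatization:D4}) and~(\ref{thm:axiomatization:C4}) the two arguments are close variants.  The paper organizes both around the monolith $a$ and repeatedly invokes join-primeness of $e$; you instead lean on rigorous compactness (Theorem~\ref{dm fsi rigorously compact}) to evaluate $f^2\to\neg d$ directly and then squeeze out the contradictions.  Both lead to the same three-interval case split for $\sbC_4$ and the same trichotomy argument for $\sbD_4$.  A small remark: your invocation of Theorem~\ref{lollipop 2} in item~(\ref{thm:axiomatization:D4}) is unnecessary, since the axiom $x\leqslant f^2$ already gives the bounds $\neg(f^2)\leqslant x\leqslant f^2$ outright; and the sentence ``forces $\neg(f^2)$ to be the minimum'' is better phrased as ``forces $x\wedge\neg x=\neg(f^2)$ for all $x$'', which is what you actually use.
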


\begin{proof}
Let $\Alg{X} \in \{ \Alg{2}, \sbS_3, \sbC_4, \sbD_4 \}$. It can be verified mechanically that $\Alg{X}$ satisfies the proposed axioms for $\Vop(\Alg{X})$. Let $\Alg{A}$ be an SI De Morgan monoid satisfying the
same
axioms, and let $a$ be the largest element of $\Alg{A}$ strictly below $e$, which exists by Lemma~\ref{fsi si simple}(\ref{si}).
By involution properties, $\neg a$ is the smallest element of $\Alg{A}$ strictly above $f$.  It suffices to show that $\Alg{A} \cong \Alg{X}$.

When $\Alg{X}$ is $\Alg{2}$, this follows from Lemma~\ref{integral implies boolean}, as every SI Boolean algebra is isomorphic to $\Alg{2}$.

If $\Alg{X}$ is $\sbS_3$ or $\sbC_4$, then $\Alg{A}$ is totally ordered
(because it is semilinear, by (\ref{eq:semilinear}), and SI).

Suppose that $\Alg{X} = \sbS_3$. In $\Alg{A}$, since $e=f$, we have $a < e < \neg a$, and there is no other element in the interval $[a,\neg a]$. We claim, moreover, that $\neg a$ has no strict
upper bound in $\sbA$.
Suppose, on the contrary, that $\neg a<b\in A$. By (\ref{eq:S3}) and since $e$ is join-prime (Lemma~\ref{fsi si simple}(\ref{prime})),
we have $e \leqslant b \to (a \vee \neg a) $ or $e \leqslant a \wedge \neg a$.  But $a \wedge \neg a = a < e$, so by (\ref{t order}), $ b \leqslant a \vee \neg a = \neg a$, a contradiction. This vindicates the above claim.
By involutional symmetry, $a$ has no strict lower bound in $\sbA$.  As $\sbA$ is totally ordered, this shows that
$A = \{a,e,\neg a\}$.  Then
$\Alg{A} \cong \sbS_3$, in view of Lemma~\ref{bounds}.

We may now assume that $\Alg{X}$ is $\sbC_4$ or $\sbD_4$, so $\sbA$
satisfies $\textup{$\neg(f^2)\leqslant x\leqslant f^2$}$ and is therefore
rigorously compact (Theorem~\ref{dm fsi rigorously compact})
and not idempotent (Corollary~\ref{sug cor}), whence $f<f^2$ and $f\nleqslant e$ in $\sbA$ (Theorem~\ref{idempotence f and e}).

Suppose $\Alg{X} = \Alg{D_4}$.  By assumption, $b \wedge \neg b = \neg(f^2)$ for any $ b \in A$.
If $e < f$, then $e = e \wedge f = \neg(f^2)$, i.e., $e$ is the bottom element of $\Alg{A}$, forcing $\Alg{A}$ to be trivial (see the remarks before Lemma~\ref{bounds}).  This contradiction shows that $e$ and $f$ are incomparable in $\Alg{A}$.

As $a$ is the greatest strict lower bound of $e$, we now have $a < f$, by Corollary~\ref{subcover}.
Then $a = e \wedge f = \neg(f^2)$ and,
by involution properties,
no element lies strictly between $f$ and $f^2$.
Suppose $b \in A$, with $\neg(f^2) < b < f$.
By (\ref{eq:D42}),
$$e \leqslant (f^2 \to \neg b) \vee (\neg b \to e) \vee b.$$
Since $\Alg{A}$ is rigorously compact and $\neg b \neq f^2$, we have $f^2 \to \neg b = \neg(f^2)$. So, because $e$ is join-prime, $e \leqslant \neg b \to e$ or $e \leqslant b$. The last disjunct is false, for otherwise $e \leqslant b < f$. Therefore, $\neg b \leqslant e$, i.e., $f \leqslant b$, contrary to assumption. Thus, no element of $\sbA$ lies strictly between $\neg(f^2)$ and $f$ and, by involution properties, no element lies strictly between $e$ and $f^2$.  It follows that $A = \{ \neg(f^2), e,f,f^2 \}$, in view of Theorem~\ref{splitting}.
In this case, $\Alg{A} \cong \sbD_4$.

Lastly, suppose $\Alg{X} = \sbC_4$. Note that $\sbC_4$ embeds into $\Alg{A}$, by Lemma~\ref{pre m}.
As $ a < e $, it follows from (\ref{eq:C42}) that
$$ e \leqslant a \to e \leqslant a\vee(f^2 \to \neg a), $$
but $e$ is join-prime and $e\nleqslant a$, so
$f^2 \leqslant \neg a$, whence $a = \neg(f^2)$.
Thus, no element of $\Alg{A}$ lies strictly between $\neg(f^2)$ and $e$, nor strictly between $f$ and $f^2$.

Suppose, with a view to contradiction, that $ b \in A \setminus \{ \neg(f^2), e, f, f^2 \} $.
By the previous paragraph and since $\Alg{A}$ is totally ordered, $ e < b < f $.
Then $ e \leqslant b \to f $,
so by (\ref{eq:C41}),
$$ e \leqslant b \wedge (b \to f) \leqslant (f \to b) \vee (b \to e). $$
Now join-primeness of $e$ gives
$f \leqslant b$ or $b \leqslant e$, a contradiction, so
$\Alg{A} \cong \sbC_4$.
\end{proof}

Theorem~\ref{atoms} says, in effect, that for each axiomatic consistent extension $\mathbf{L}$ of $\mathbf{R}^\mathbf{t}$, there exists
$\sbB\in\{\mathbf{2},\sbS_3,\sbC_4,\sbD_4\}$ such that the theorems of $\mathbf{L}$ all take values $\geqslant e$
on any evaluation of their variables in $\sbB$.
Postulates for the four maximal consistent axiomatic extensions of $\Logic{R^t}$ follow systematically from Theorem~\ref{thm:axiomatization}.
For example,
(\ref{eq:semilinear}) becomes the axiom $\textup{$(p \to q) \vee (q \to p)$}$, while (\ref{eq:C42}) becomes $(p \to \mathbf{t}) \to (p \vee(\mathbf{f}^2 \to \neg p))$.

\section{Relevant Algebras}\label{relevant algebras}

The relevance logic literature is equivocal
as to the precise definition of a De Morgan monoid.  Our Definition~\ref{de morgan monoid definition}
conforms with Dunn and Restall \cite{DR01}, Meyer and Routley \cite{MR72,RM73}, Slaney \cite{Sla85} and Urquhart \cite{Urq84}, yet other papers by some of the same
authors entertain a discrepancy.  In all sources, the neutral element of a De Morgan monoid $\sbA$ is assumed to exist but, in \cite{Sla89,Sla91,Sla93} for
instance, it is not distinguished, i.e., the symbol for $e$ (and likewise $f$) is absent from the signature of $\sbA$.  That locally innocuous convention has
global effects: it would prevent $\mathsf{DMM}$ from being a variety, as it would cease to be closed under subalgebras, and the tight correspondence
between axiomatic extensions of $\mathbf{R}^\mathbf{t}$ and subvarieties of $\mathsf{DMM}$ would
disappear.\footnote{\,The meanings of statements about `$n$--generated De Morgan monoids' would also change.
For instance, \cite[Thm.~5]{Sla89} says that every FSI De Morgan monoid on one idempotent generator is finite, but this
is false when $e$ is distinguished, as the proof of \cite[Thm.~6]{Sla89}
makes clear.}

This may explain why we have found in the literature no
analysis of the
subvariety lattice of $\mathsf{DMM}$
(despite interest in the problem discernable in
\cite{Mey83,Mey86}), and in particular no
statement of Theorem~\ref{atoms}, identifying the only four maximal consistent axiomatic extensions
of $\mathbf{R}^\mathbf{t}$ (although the algebras defining these extensions were well known to relevance logicians).

The practice of not distinguishing neutral elements stems from
the formal system $\mathbf{R}$ of Anderson and Belnap \cite{AB75}, which differs from
$\mathbf{R}^\mathbf{t}$ only in that it lacks the sentential constant $\mathbf{t}$ (corresponding to $e$) and its postulates.
The omission of constants from $\mathbf{R}$ produces a desirable
{\em variable sharing principle\/} for `relevant' implication:
\[
\textup{if \,$\,\vdash_\mathbf{R}\al\rig\be$, \,then $\al$ and $\be$ have a common variable \,\cite{Bel60}.}
\]
The corresponding claim for $\mathbf{R}^\mathbf{t}$ is false, e.g.,
\[
\textup{$\,\vdash_{\mathbf{R}^\mathbf{t}}\,\mathbf{t}\rig (p\rig p)$ \ and \ $\,\vdash_{\mathbf{R}^\mathbf{t}}\,(p\wedge\mathbf{t})\rig(\mathbf{t}\vee q)$.}
\]
\begin{defn}
\textup{A \emph{relevant algebra} is an algebra $\langle A; \bdot, \wedge, \vee, \neg  \rangle$ such that
$\langle A; \bdot \rangle$ is a commutative semigroup, $\langle A; \wedge, \vee \rangle $ is a distributive lattice and
\begin{align*}
& \neg \neg a = a \leqslant a \bdot a,\\
& \textup{$a \leqslant b$ \,iff\, $\neg b \leqslant \neg a$,}\\
& \textup{$a \bdot b \leqslant c$ \,iff\, $a \bdot \neg c \leqslant \neg b$,}\\
& \textup{$a \leqslant a \bdot (\neg (b \bdot \neg b) \wedge \neg(c \bdot \neg c))$,}
\end{align*}
for all $a,b,c \in A$.
The class of all relevant algebras is denoted by $\mathsf{RA}$.}
\end{defn}
The two defining postulates of $\mathsf{RA}$ that are not pure equations can be paraphrased easily as equations, so $\mathsf{RA}$ is a variety.
It is congruence distributive (since its members have lattice reducts) and congruence permutable (see for instance \cite[Prop.~8.3]{vAR04}).

The main motivation for $\mathsf{RA}$ is that it algebraizes the logic $\mathbf{R}$.
The algebraization process for
$\mathbf{R}^\mathbf{t}$ and $\mathsf{DMM}$
carries over verbatim to $\mathbf{R}$ and $\mathsf{RA}$,
provided we use (\ref{t eq}) as a formal device for eliminating all
mention of $e$.
Further work on relevant algebras can be found in \cite{Dzi83,FR90,Mak71,Mak73,Raf11,RS,Swi95,Swi99}.

Because $\mathsf{RA}$ \emph{is} closed under subalgebras, its study accommodates the variable sharing principle of relevance logic, without sacrificing the benefits of
accurate algebraization.  For the algebraist, however, $\mathsf{RA}$ has some forbidding features.
It
lacks the congruence extension property (CEP), for instance, as does its class of finite members (see
\cite[p.\,289]{CD99}), whereas $\mathsf{DMM}$ has the CEP.  Also, De Morgan monoids have much in common with
abelian groups
(residuals being a partial surrogate for multiplicative inverses), but relevant algebras are less intuitive, being semigroup-based, rather than
monoid-based.

The following facts are therefore noteworthy.

\begin{thm}\label{thm:fgenRAisDMM}\
\begin{enumerate}
\item\label{ra subreducts}
$\mathsf{RA}$ coincides with the class of all\/ $e$--\textup{free subreducts} of De Morgan monoids (i.e., all subalgebras of reducts\/ $\langle A;\bcdw,\wedge,\vee,\neg\rangle$
of De Morgan monoids\/ $\sbA$\textup{).}

\smallskip

\item\label{ra fg reducts}
If a relevant algebra is finitely generated, then it is the\/ $e$--free\/ \emph{reduct} $\langle A;\bcdw,\wedge,\vee,\neg\rangle$ of a De Morgan monoid\/ $\sbA$\textup{.}
In this case, the unique neutral element of\/ $\sbA$ is
the greatest lower bound of all\/ $a\rig a$\textup{,} where\/ $a$ ranges over any finite generating set for\/ $\langle A;\bcdw,\wedge,\vee,\neg\rangle$\textup{.}
\end{enumerate}
\end{thm}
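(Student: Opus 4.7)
For the easy inclusion of~(i), each axiom of $\mathsf{RA}$ is a universal Horn sentence in the $e$-free signature and hence transfers from $\langle A;\bdot,\wedge,\vee,\neg\rangle$ to any subalgebra. The only non-routine check is the last axiom, which holds in any De Morgan monoid because $\neg(x\bdot\neg x)=x\rig x\geqslant e$ (since $x\bdot\neg x\leqslant f$), so the meet in question exceeds $e$ and multiplying $a$ by an element $\geqslant e$ preserves $a$ from below. For the reverse inclusion of~(i), reduce to~(ii): every RA embeds into an ultraproduct of its finitely generated subalgebras, and the $e$-free subreducts of De Morgan monoids form a class closed under $\Iop\Sop\Pop\Puop$; so if every finitely generated RA is in fact the $e$-free \emph{reduct} of some De Morgan monoid, then every RA is a subreduct of one.

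For~(ii), let $\sbB$ be generated as an RA by the finite set $X=\{a_1,\dots,a_n\}$, and put $e:=\bigwedge_{a\in X}\neg(a\bdot\neg a)\in B$. The plan is to verify that $\sbA:=\langle B;\bdot,\wedge,\vee,\neg,e\rangle$ is a De Morgan monoid, with residuation $a\rig b:=\neg(a\bdot\neg b)$. Every DMM-axiom other than the unit law $e\bdot b=b$ is immediate from the RA-axioms. For the $\leqslant$ half of the unit law, the involutive residuation of $\mathsf{RA}$ delivers the equivalence $e\bdot b\leqslant b\iff b\bdot\neg b\leqslant\neg e$. Since $\neg e=\bigvee_{a\in X}(a\bdot\neg a)$, the set $S:=\{b\in B:b\bdot\neg b\leqslant\neg e\}$ contains $X$ trivially; it is closed under $\neg$ by the symmetry $\neg b\bdot b=b\bdot\neg b$, under $\wedge$ and $\vee$ by distributivity of fusion over joins combined with De Morgan's laws and monotonicity, and under $\bdot$ by the identity $(b\bdot c)\bdot e=b\bdot(c\bdot e)\leqslant b\bdot c$ (using $c\in S$).

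The main obstacle is the reverse inequality $b\leqslant b\bdot e$: the RA-axiom supplies this only when the relevant meet involves at most two $\neg(x\bdot\neg x)$-terms, whereas here we need it over $n$ generators simultaneously. My strategy is to prove the $\subseteq$ direction of~(i) first by an explicit adjoin-a-unit construction, in the spirit of the RL-to-IRL embedding of \cite{GR04}, and then to apply it to $\sbB$. Inside a De Morgan monoid $\sbA^*$ containing $\sbB$ as an $e$-free subreduct, the DMM-subalgebra generated by $X\cup\{e^{\sbA^*}\}$ is finitely generated and hence bounded, by Theorem~\ref{bounded}; one then verifies that $e^{\sbA^*}$ coincides with the proposed meet (the containment $e^{\sbA^*}\leqslant\bigwedge_{a\in X}\neg(a\bdot\neg a)$ being automatic, with the reverse forced by finite generation and boundedness). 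Consequently $e^{\sbA^*}\in B$, and $\sbB$ is literally the $e$-free reduct of this DMM. The uniqueness of $e$ and its independence from the choice of generating set, claimed in~(ii), then follow from uniqueness of the identity of a commutative monoid.
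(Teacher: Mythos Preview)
The paper does not prove this theorem: it cites \cite[Thm.~5.3]{ORV08} and \cite[p.\,343]{AB75} for~(\ref{ra fg reducts}), and then derives~(\ref{ra subreducts}) from~(\ref{ra fg reducts}) via the ultraproduct-of-finitely-generated-subalgebras argument, exactly as you do. So your reduction of~(\ref{ra subreducts}) to~(\ref{ra fg reducts}) matches the paper, and your verification of the easy inclusion is fine.

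Your attempted proof of~(\ref{ra fg reducts}), however, has a genuine gap. The appeal to an ``adjoin-a-unit construction in the spirit of \cite{GR04}'' is not a proof: \cite{GR04} adjoins an \emph{involution} to an RL, not a unit to an involutive semigroup, and you give no construction. More seriously, the claim that $e^{\sbA^*}$ coincides with $u\seteq\bigwedge_{a\in X}\neg(a\bcdw\neg a)$ is false. Take $\sbB=\mathbf{2}^-$ sitting inside $\sbA^*=\sbS_3$ with $X=\{1\}$: then $u=1\rig 1=1$, while $e^{\sbA^*}=0$. Boundedness of the subalgebra generated by $X\cup\{e^{\sbA^*}\}$ does not force $u\leqslant e^{\sbA^*}$. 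What your argument \emph{would} yield, once an embedding into some $\sbA^*$ is granted, is only that $u\geqslant e^{\sbA^*}$ and hence $b\bcdw u\geqslant b\bcdw e^{\sbA^*}=b$; combined with your correct proof of $u\bcdw b\leqslant b$, that already shows $u$ is a unit for $\sbB$ (possibly different from $e^{\sbA^*}$), which suffices.

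But you can avoid the unconstructed embedding altogether and resolve your ``main obstacle'' directly in $\mathsf{RA}$. Write $|x|\seteq\neg(x\bcdw\neg x)=x\rig x$ and note that residuation $x\bcdw y\leqslant z\iff y\leqslant x\rig z$ holds in every relevant algebra. From $|b|\bcdw b\leqslant b$ one gets that each $|b|$ is idempotent, hence so is $w_2\seteq|b|\wedge|c|$. The RA axiom gives $a\leqslant a\bcdw w_2$ for all $a$; taking $a=|w_2|$ yields $|w_2|\leqslant|w_2|\bcdw w_2\leqslant w_2$, while idempotence gives $w_2\leqslant|w_2|$, so $|w_2|=w_2$. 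Now induct: if $w_n\seteq\bigwedge_{i\leqslant n}|a_i|$ satisfies $|w_n|=w_n$, then the RA axiom applied to $w_n$ and $a_{n+1}$ gives
\[
a\;\leqslant\; a\bcdw\bigl(|w_n|\wedge|a_{n+1}|\bigr)\;=\;a\bcdw\bigl(w_n\wedge|a_{n+1}|\bigr)\;=\;a\bcdw w_{n+1},
\]
and the same argument shows $|w_{n+1}|=w_{n+1}$. This yields $b\leqslant b\bcdw u$ for all $b\in B$, and together with your proof of $u\bcdw b\leqslant b$ makes $u$ the required neutral element.
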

\noindent
Here, (\ref{ra fg reducts}) is a specialization of \cite[Thm.~5.3]{ORV08}, but it algebraizes a much older logical
result of Anderson and Belnap
\cite[p.\,343]{AB75}, already implicit in the proof of \cite[Lem.~2]{AB59}.
We can infer (\ref{ra subreducts}) from (\ref{ra fg reducts}), as every algebra embeds into an ultraproduct
of finitely generated subalgebras of itself (or see \cite[Cor.~4.11]{HR07}).

Theorem~\ref{thm:fgenRAisDMM}(\ref{ra subreducts}) reflects the fact that the $\mathbf{t}$--free fragment of $\,\vdash_{\mathbf{R}^\mathbf{t}}$ is just
$\,\vdash_\mathbf{R}$, so there is a smooth passage from either system to the other.  In particular, the variable sharing principle holds for the
$\mathbf{t}$--free formulas of $\mathbf{R}^\mathbf{t}$.

The $e$--free reduct $\langle A;\bcdw,\wedge,\vee,\neg\rangle$ of a De Morgan monoid $\sbA$ shall be denoted by $\sbA^-$.
Also, if $\Class{K}$ is a class
of De Morgan monoids, then $\Class{K}^-$ shall denote the class of $e$--free reducts of the members of $\Class{K}$.
In this case, on general grounds,
\begin{equation}\label{eq:reducts}
\Vop(\Class{K})^- \subseteq \Vop(\Class{K}^-).
\end{equation}
Indeed, every equation satisfied by $\Class{K}^-$
is an $e$--free identity of $\Class{K}$,
and therefore of $\Vop(\Class{K})$,
and therefore of $\Vop(\Class{K})^-$.
Because a De Morgan monoid and its $e$--free reduct have the same congruence lattice,
we also obtain:

\begin{lem}
\label{thm:isomorphicCongruences}
A De Morgan monoid\/ $\Alg{A}$ is SI \,[resp.\ FSI; simple] \,iff the same is true of\/ $\Alg{A}^-$\textup{.}
\end{lem}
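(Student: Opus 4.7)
The plan is to reduce all three equivalences to the single observation that $\sbA$ and $\sbA^-$ have identical congruence lattices, and then recall that SI, FSI, and simple are each defined purely in terms of the congruence lattice.

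First I would verify that $\boldsymbol{\mathit{Con}}\,\sbA = \boldsymbol{\mathit{Con}}\,\sbA^-$. A binary relation $\theta$ on $A$ is a congruence iff it is an equivalence relation that is compatible with every fundamental operation. The operations of $\sbA^-$ are exactly those of $\sbA$ with the sole exception of the nullary operation $e$. But compatibility with the constant $e$ is the trivial condition $\langle e,e\rangle\in\theta$, which holds automatically whenever $\theta$ is reflexive. Hence an equivalence relation on $A$ is a congruence of $\sbA$ iff it is a congruence of $\sbA^-$, and the two congruence lattices coincide (as lattices).

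Next I would simply invoke the definitions recalled in the excerpt (before Lemma~\ref{fsi si simple}): $\sbA$ is SI iff $\textup{id}_A$ is completely meet-irreducible in $\boldsymbol{\mathit{Con}}\,\sbA$; it is FSI iff $\textup{id}_A$ is meet-irreducible there; and it is simple iff $|\boldsymbol{\mathit{Con}}\,\sbA|=2$. All three properties are thus purely lattice-theoretic features of $\boldsymbol{\mathit{Con}}\,\sbA$, so by the previous paragraph they transfer verbatim between $\sbA$ and $\sbA^-$.

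There is essentially no obstacle here; the only thing to be careful about is the (standard) point that universal algebraists sometimes define congruences as subsets of $A^2$ and sometimes as kernel-like equivalence structures, but in both conventions a nullary operation contributes only the tautological requirement $\langle e,e\rangle\in\theta$. Once this observation is stated, the lemma is immediate.
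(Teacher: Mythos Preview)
Your proposal is correct and matches the paper's own justification exactly: the paper simply states (just before the lemma) that a De Morgan monoid and its $e$--free reduct have the same congruence lattice, and the lemma is drawn as an immediate consequence. Your more explicit observation that compatibility with the nullary operation $e$ is automatic from reflexivity is the content behind that remark.
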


Crucially, however, subalgebras
of the $e$--free reduct of a De Morgan monoid need not contain $e$, and they need not be reducts of De Morgan monoids themselves,
unless they are finitely generated.
For instance, the free $\aleph_0$--generated relevant algebra is such a subreduct, and it lacks a neutral element,
because the variable sharing principle rules out theorems of $\mathbf{R}$ of the form $\alpha\rig (p\rig p)$
whenever $p$ is a variable not occurring in the formula $\alpha$.

Still, because of Theorem~\ref{thm:fgenRAisDMM}, it is often easiest to obtain a result about relevant algebras indirectly,
via
a more swiftly established
property of De Morgan monoids.  This is exemplified below in Corollary~\ref{thm:FGRAisBounded}, and more strikingly in Theorem~\ref{thm:minimalvarietiesRA}.
(We extend our use of the terms `bounded' and `rigorously compact' to relevant algebras in the obvious way, noting that the existence of a neutral element is not needed in the proof of Lemma~\ref{rigorously compact}.)

\begin{cor}
\label{thm:FGRAisBounded}
\textup{(\cite{Swi99})} Every finitely generated relevant algebra $\Alg{A}$ is bounded.
\end{cor}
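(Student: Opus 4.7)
The plan is to reduce immediately to Theorem~\ref{bounded} via Theorem~\ref{thm:fgenRAisDMM}(\ref{ra fg reducts}). Let $\sbA$ be a finitely generated relevant algebra, and let $\{a_1,\dots,a_n\}$ be a finite generating set for $\sbA$ in the signature $\bcdw,\wedge,\vee,\neg$. By Theorem~\ref{thm:fgenRAisDMM}(\ref{ra fg reducts}), there is a De Morgan monoid $\sbB$ such that $\sbA=\sbB^-$; in particular $A=B$ as sets, and the neutral element $e$ of $\sbB$ satisfies $e=\bigwedge_{i\leq n}(a_i\rig a_i)\in A$.

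Next I would observe that $\sbB$ itself is finitely generated as a De Morgan monoid by the same set $\{a_1,\dots,a_n\}$. Indeed, let $\sbB'$ denote the subalgebra of $\sbB$ generated by $\{a_1,\dots,a_n\}$ in the full signature $\bcdw,\rig,\wedge,\vee,\neg,e$. Since $\rig$ is term-definable in an IRL from $\bcdw$ and $\neg$ (by (\ref{neg properties})), the universe of $\sbB'$ already contains every element expressible from $\{a_1,\dots,a_n\}$ using $\bcdw,\wedge,\vee,\neg$, so $B'\supseteq A=B$, forcing $\sbB'=\sbB$.

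Now Theorem~\ref{bounded} applies to the square-increasing IRL $\sbB$: it produces an element $b\in B$ with $\neg b\leqslant c\leqslant b$ for every $c\in B$. Since $B=A$ and the lattice order of $\sbA$ is inherited from $\sbB$, the same element $b$ witnesses that $\sbA$ is bounded.

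There is essentially no obstacle here; the only thing to be careful about is signatures: one must verify that $e$ lies in $A$ (which is given by Theorem~\ref{thm:fgenRAisDMM}(\ref{ra fg reducts})) so that the subalgebra generated in the enriched signature does not outgrow $A$, and one must note that the bounds produced by Theorem~\ref{bounded} are elements of the common underlying set, not of some strictly larger monoid reduct.
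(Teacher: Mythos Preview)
Your proof is correct and follows essentially the same approach as the paper: invoke Theorem~\ref{thm:fgenRAisDMM}(\ref{ra fg reducts}) to realize $\sbA$ as the $e$--free reduct of a De Morgan monoid with the same finite generating set, then apply Theorem~\ref{bounded}. The paper simply asserts that the De Morgan monoid has ``the same finite generating set'', whereas you spell out why (the constant $e$ lies in $A$, so adding it to the signature cannot enlarge the generated subalgebra); your remark about $\rig$ being definable from $\bcdw,\neg$ is harmless but not actually needed for that step.
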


\begin{proof}
By Theorem~\ref{thm:fgenRAisDMM}(\ref{ra fg reducts}), $\Alg{A}$ is a reduct of a De Morgan monoid with
the same finite generating set, so $\Alg{A}$ is bounded,
by Theorem~\ref{bounded}.
\end{proof}

In contrast with this argument, the only published proof of Corollary~\ref{thm:FGRAisBounded},
viz.\ \cite[Prop.~5]{Swi99}, is quite complicated.
(For one generator, the indicated bounds are built up using all six of the inequivalent implicational
one-variable formulas of $\mathbf{R}$, determined in \cite{Mey70}.)
The result is attributed in \cite{Swi99} to Meyer and to Dziobiak (independently).

\begin{cor}
\label{thm:booleansubalgebra}
Every nontrivial relevant algebra\/ $\Alg{A}$ has a copy of\/ $\Alg{2}^-$ as a subalgebra.
\end{cor}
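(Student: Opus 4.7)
The plan is to reduce the statement to the finitely generated case, where the boundedness machinery of Section~4 applies. Since $\Alg{A}$ is nontrivial, I would pick distinct elements $a,b\in A$ and form the subalgebra $\sbB:=\boldsymbol{\mathit{Sg}}^{\Alg{A}}\{a,b\}$, which is finitely generated and nontrivial.

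By Theorem~\ref{thm:fgenRAisDMM}(\ref{ra fg reducts}), $\sbB$ is the $e$--free reduct $\sbC^{-}$ of a De Morgan monoid $\sbC$ with $C=B$. Since $\sbC$ inherits nontriviality from $\sbB$, Theorem~\ref{bounded} (equivalently Corollary~\ref{thm:FGRAisBounded}) yields extrema $\bot,\top$ of $\sbC$, and the remark after (\ref{t reg}) forces $\bot<e\leqslant\top$, hence $\bot<\top$. Lemma~\ref{bounds} then tells me that $\{\bot,\top\}$ is closed under every operation of the $\bcdw,\rig,\wedge,\vee,\neg$ reduct of $\sbC$, and \emph{a fortiori} under those of the smaller-signature reduct $\sbC^{-}=\sbB\subseteq\Alg{A}$. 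So $\{\bot,\top\}$ is the universe of a two-element subalgebra $\sbD$ of $\Alg{A}$.

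To identify $\sbD$ with $\Alg{2}^{-}$, I would note—again using Lemma~\ref{bounds} together with $e\leqslant\top$ (which gives $\top\bcdw\top=\top$)—that on $\{\bot,\top\}$ fusion coincides with meet while the involution swaps the two elements, matching the $e$--free structure of $\mathbf{2}$ read off from the Hasse diagram preceding Theorem~\ref{0 gen simples}.

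The only real obstacle is that an arbitrary relevant algebra need not contain a neutral element or be bounded (the free $\aleph_0$--generated relevant algebra, mentioned earlier in the section, exemplifies this), so Lemma~\ref{bounds} cannot be applied to $\Alg{A}$ directly. The reduction via Theorem~\ref{thm:fgenRAisDMM}(\ref{ra fg reducts}) is precisely what removes this difficulty, by supplying the missing $e$ inside every finitely generated nontrivial subalgebra of $\Alg{A}$ and thereby legitimising the passage to $\bot,\top$.
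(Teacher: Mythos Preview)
Your proof is correct and follows essentially the same approach as the paper: pass to a finitely generated nontrivial subalgebra, invoke Theorem~\ref{thm:fgenRAisDMM}(\ref{ra fg reducts}) to view it as the $e$--free reduct of a De Morgan monoid, and then use boundedness together with Lemma~\ref{bounds} to exhibit $\{\bot,\top\}$ as a copy of $\Alg{2}^-$. Your version is somewhat more explicit about why $\bot\neq\top$ and about the identification with $\Alg{2}^-$, but the argument is the same.
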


\begin{proof}
Let $\sbB$ be the subalgebra of $\sbA$ generated by an arbitrary pair
of distinct elements of $A$.
By Corollary~\ref{thm:FGRAisBounded},
$\Alg{B}$ has
(distinct) extrema $\bot,\top$.
By Theorem~\ref{thm:fgenRAisDMM}(\ref{ra fg reducts}),
$\sbB$ is the $e$--free reduct of a De Morgan monoid, so
by Lemma~\ref{bounds},
$\{ \bot, \top \}$ is the universe of a subalgebra of $\Alg{B}$, isomorphic to $\Alg{2}^-$.
\end{proof}

Clearly, when a Boolean algebra $\Alg{A}$ is thought of as an integral De Morgan monoid, it has the same term operations as its $e$--free reduct $\Alg{A}^-$, because $e$ is definable as $x \to x$.
Thus,
the variety
of Boolean algebras can be identified with $\Vop(\Alg{2}^-) = \Qop(\Alg{2}^-)$.

\begin{cor}
\label{thm:booleanisminimal}
Boolean algebras constitute the smallest nontrivial (quasi) variety of relevant algebras.
\end{cor}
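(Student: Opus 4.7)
The plan is to derive the corollary as an immediate consequence of Corollary~\ref{thm:booleansubalgebra} together with the identification, noted just before the corollary, of the variety of Boolean algebras with $\Vop(\Alg{2}^-)=\Qop(\Alg{2}^-)$. Specifically, I would let $\mathsf{K}$ be any nontrivial quasivariety of relevant algebras and pick a nontrivial $\Alg{A}\in\mathsf{K}$. By Corollary~\ref{thm:booleansubalgebra}, a copy of $\Alg{2}^-$ embeds into $\Alg{A}$. Since quasivarieties are closed under isomorphism and subalgebras, $\Alg{2}^-\in\mathsf{K}$, and therefore $\Qop(\Alg{2}^-)\subseteq\mathsf{K}$. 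By the preceding remark, $\Qop(\Alg{2}^-)$ is (up to term equivalence) the variety of Boolean algebras, proving the corollary for both quasivarieties and varieties simultaneously.

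The argument really has no hard step: all the substance has already been packaged into Corollary~\ref{thm:booleansubalgebra}, which in turn relies on Theorem~\ref{thm:fgenRAisDMM}(\ref{ra fg reducts}) and Lemma~\ref{bounds} to produce the two-element subalgebra. The only point worth flagging in the write-up is the justification for $\Qop(\Alg{2}^-)=\Vop(\Alg{2}^-)$ being identifiable with Boolean algebras: since $e$ is term-definable in $\Alg{2}$ as $x\rig x$, the $e$--free reduct $\Alg{2}^-$ carries the same clone of term operations as $\Alg{2}$, so the class operators $\Hop,\Sop,\Pop,\Puop$ applied to $\Alg{2}^-$ and $\Alg{2}$ yield matching classes, and the equality $\Vop(\Alg{2})=\Qop(\Alg{2})$ for Boolean algebras transfers to the reducts.

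In short, the proof would occupy only a couple of lines, citing Corollary~\ref{thm:booleansubalgebra} and the preceding identification, and observing that closure of $\mathsf{K}$ under $\Iop$ and $\Sop$ suffices.
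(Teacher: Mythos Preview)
Your proposal is correct and matches the paper's intended argument: the corollary is stated without proof precisely because it follows immediately from Corollary~\ref{thm:booleansubalgebra} and the identification $\Vop(\Alg{2}^-)=\Qop(\Alg{2}^-)$ with Boolean algebras, via closure of quasivarieties under $\Iop$ and $\Sop$, exactly as you describe.
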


This reconfirms, of course, that classical propositional logic
is the largest consistent extension of $\Logic{R}$.
With Theorem~\ref{thm:fgenRAisDMM}(\ref{ra fg reducts}),
it also yields the following.

\begin{thm}\label{surjection}
There is a join-preserving (hence isotone) surjection from the lattice of subvarieties of\/ $\mathsf{DMM}$ to that of\/ $\mathsf{RA}$\textup{,}
defined by\/ $\Class{K}\mapsto\Vop(\Class{K}^-)$\textup{.}

Moreover, this map remains surjective when its domain is restricted to the varieties that contain\/ $\mathbf{2}$\textup{,} together
with the trivial variety.
\end{thm}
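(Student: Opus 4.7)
The plan is to treat the three claims separately: well-definedness and join-preservation; surjectivity; and the refinement under domain restriction. The key tools throughout are the inclusion (\ref{eq:reducts}) and Theorem~\ref{thm:fgenRAisDMM}(\ref{ra fg reducts}).

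First I would observe that the map $\mathsf{K}\mapsto\Vop(\mathsf{K}^-)$ is well-defined, because each $\sbA^-$ is a relevant algebra, and visibly isotone, because $\mathsf{K}_1\subseteq\mathsf{K}_2$ entails $\mathsf{K}_1^-\subseteq\mathsf{K}_2^-$. Isotonicity already yields $\Vop(\mathsf{K}_1^-)\vee\Vop(\mathsf{K}_2^-)\subseteq\Vop((\mathsf{K}_1\vee\mathsf{K}_2)^-)$. For the opposite inclusion, I would exploit $(\mathsf{K}_1\cup\mathsf{K}_2)^-=\mathsf{K}_1^-\cup\mathsf{K}_2^-$ and apply (\ref{eq:reducts}) to the class $\mathsf{K}_1\cup\mathsf{K}_2$:
\[
(\mathsf{K}_1\vee\mathsf{K}_2)^-\;=\;\Vop(\mathsf{K}_1\cup\mathsf{K}_2)^-\;\subseteq\;\Vop(\mathsf{K}_1^-\cup\mathsf{K}_2^-),
\]
from which taking $\Vop$ yields the desired bound.

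Next, for surjectivity, I would fix a subvariety $\mathsf{L}\subseteq\mathsf{RA}$ and let $\mathsf{L}_{\textup{fg}}$ denote its class of finitely generated members. Since every algebra embeds into an ultraproduct of its finitely generated subalgebras, $\mathsf{L}=\Vop(\mathsf{L}_{\textup{fg}})$. By Theorem~\ref{thm:fgenRAisDMM}(\ref{ra fg reducts}), each $\sbB\in\mathsf{L}_{\textup{fg}}$ is isomorphic to $\sbA^-$ for some De Morgan monoid $\sbA$; I would choose one such $\sbA$ per isomorphism type in $\mathsf{L}_{\textup{fg}}$, collect these into a class $\mathsf{K}'$, and set $\mathsf{K}=\Vop(\mathsf{K}')$. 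Then $\Vop((\mathsf{K}')^-)=\Vop(\mathsf{L}_{\textup{fg}})=\mathsf{L}$. From $\mathsf{K}^-\supseteq(\mathsf{K}')^-$ one gets $\Vop(\mathsf{K}^-)\supseteq\mathsf{L}$; conversely, (\ref{eq:reducts}) gives $\mathsf{K}^-=\Vop(\mathsf{K}')^-\subseteq\Vop((\mathsf{K}')^-)=\mathsf{L}$, so $\Vop(\mathsf{K}^-)\subseteq\mathsf{L}$.

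For the refinement, the trivial subvariety of $\mathsf{DMM}$ maps to the trivial subvariety of $\mathsf{RA}$, so I may assume $\mathsf{L}$ is nontrivial. Corollary~\ref{thm:booleanisminimal} then supplies $\mathbf{2}^-\in\mathsf{L}$, so replacing $\mathsf{K}'$ by $\mathsf{K}'\cup\{\mathbf{2}\}$ in the construction above produces a subvariety of $\mathsf{DMM}$ that contains $\mathbf{2}$ and still maps onto $\mathsf{L}$, precisely because the added reduct $\mathbf{2}^-$ already lies in $\mathsf{L}$. The sole substantive input is Theorem~\ref{thm:fgenRAisDMM}(\ref{ra fg reducts}), which lets me lift finitely generated relevant algebras back to De Morgan monoids; everything else is formal manipulation with $\Vop$ and the inclusion (\ref{eq:reducts}), so I anticipate no serious obstacle.
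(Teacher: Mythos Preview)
Your proof is correct, and for surjectivity and the refinement it matches the paper's argument almost verbatim (the paper writes $\mathsf{M}=\{\sbA^+:\sbA\in\mathsf{L}_{\textup{FG}}\}$ where you write $\mathsf{K}'$, and invokes Corollary~\ref{thm:booleansubalgebra} rather than Corollary~\ref{thm:booleanisminimal}, but the content is the same).

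Your join-preservation argument, however, is genuinely different and more elementary. The paper derives join-preservation from J\'{o}nsson's Theorem together with two facts about ultraproducts: that an ultraproduct of reducts is the reduct of the corresponding ultraproduct, and that an ultraproduct drawn from the join of two varieties already lies in one of them. You bypass all of this by applying (\ref{eq:reducts}) directly to the class $\mathsf{K}_1\cup\mathsf{K}_2$, obtaining $(\mathsf{K}_1\vee\mathsf{K}_2)^-=\Vop(\mathsf{K}_1\cup\mathsf{K}_2)^-\subseteq\Vop(\mathsf{K}_1^-\cup\mathsf{K}_2^-)$ in one step. This is cleaner, avoids congruence distributivity entirely, and extends without change to arbitrary joins. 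The paper's route, on the other hand, exposes where the FSI members of the image variety actually live, which could be useful elsewhere---but for the bare statement of the theorem, your approach is preferable.
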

\begin{proof}
Preservation of joins follows from J\'{o}nsson's Theorem and the following two facts:
(i)~an ultraproduct of reducts of members of a class $\mathsf{C}$ is the reduct of a (corresponding) ultraproduct of members of $\mathsf{C}$, and (ii)~an
ultra\-product of members of the join of two varieties belongs to one of the two varieties.
To prove surjectivity, let $\Class{L}$ be a variety of relevant algebras, and $\Class{L}_\textup{FG}$ its class
of finitely generated members.  By Theorem~\ref{thm:fgenRAisDMM}(\ref{ra fg reducts}), each $\sbA\in\Class{L}_\textup{FG}$ is
the $e$--free reduct of a unique De Morgan monoid $\sbA^+$.
Let
$\textup{$\Class{M}=\{\sbA^+:\sbA\in\Class{L}_\textup{FG}\}$}$.
Then $\Class{L}_\textup{FG}=\Class{M}^-\subseteq\Vop(\Class{M})^-$, while (\ref{eq:reducts}) shows that
$\Vop(\Class{M})^-\subseteq\Vop(\Class{L}_\textup{FG})$.  Thus, $\Class{L}=\Vop(\Vop(\Class{M})^-)$, as varieties are
determined by their finitely generated members.  If $\Class{L}$ is nontrivial, then $\mathbf{2}^-\in\Class{L}_\textup{FG}$, by Corollary~\ref{thm:booleansubalgebra}, so $\mathbf{2}\in\Class{M}$.
\end{proof}

Whereas the above argument about joins would apply in any context where the indicated reduct class is a congruence
distributive variety, the surjectivity of the (restricted) function in Theorem~\ref{surjection} is a special feature
of relevant algebras, reliant on Theorem~\ref{thm:fgenRAisDMM}(\ref{ra fg reducts}).  The restricted function is not
injective, however.  Indeed, J\'{o}nsson's Theorem shows that
$\mathbb{V}(\mathbf{2},\sbS_{2n+1})\subsetneq\mathbb{V}(\sbS_{2n},\sbS_{2n+1})$
for all integers $n>1$, but these two varieties have the same image
under the map $\mathsf{K}\mapsto\mathbb{V}(\mathsf{K}^-)$, because $\sbS_{2n}^-$
embeds into $\sbS_{2n+1}^-$ (although $\sbS_{2n}$ does not embed into $\sbS_{2n+1}$).\,\footnote{\,The
function sending a subvariety $\mathsf{W}$ of $\mathsf{RA}$ to the variety generated by the
De Morgan monoids whose $e$--free reducts lie in $\mathsf{W}$ is an injective join-preserving right-inverse for the
function in Theorem~\ref{surjection}, but it is not surjective.}

This failure of injectivity limits the usefulness of the above function when we analyse the
subvariety lattice of $\mathsf{RA}$.
Nevertheless, we can already derive \'{S}wirydowicz's description of the lower
part of that lattice
by a mathematically simpler argument, based on the situation for De Morgan
monoids.  In particular, we avoid use of the complex ternary relation semantics for $\mathbf{R}$ (see \cite{RM73}),
employed in \cite{Swi95}.

\begin{thm}
\label{thm:minimalvarietiesRA}
\textup{(\cite[Thm.~12]{Swi95})} \,$\Vop(\sbS_3^-)$\textup{,} $\Vop(\sbC_4^-) $ and\/ $\Vop(\sbD_4^-)$ are exactly the covers of\/ $\Vop(\Alg{2}^-)$ in the subvariety lattice of\/ $\Class{RA}$\textup{.}
\end{thm}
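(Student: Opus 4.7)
My plan is to verify two things: (a)~each of $\Vop(\sbS_3^-)$, $\Vop(\sbC_4^-)$ and $\Vop(\sbD_4^-)$ actually covers $\Vop(\Alg{2}^-)$ in the subvariety lattice of $\Class{RA}$, and (b)~any subvariety of $\Class{RA}$ strictly containing $\Vop(\Alg{2}^-)$ includes one of these three.

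For (a), I would first observe that each $\sbX \in \{\sbS_3,\sbC_4,\sbD_4\}$ is non-integral, hence not Boolean (Lemma~\ref{integral implies boolean}), so $\sbX^- \notin \Vop(\Alg{2}^-)$ and therefore $\Vop(\sbX^-) \supsetneq \Vop(\Alg{2}^-)$. Since $\Class{RA}$ is congruence distributive, J\'{o}nsson's Theorem gives that every FSI member of $\Vop(\sbX^-)$ lies in $\Hop\Sop(\sbX^-)$, as $\sbX^-$ is finite. By Lemma~\ref{thm:isomorphicCongruences}, $\sbX^-$ is simple, so its nontrivial homomorphic images are copies of itself; and a direct inspection of subalgebras (using Lemma~\ref{bounds} to identify the extrema as a copy of $\Alg{2}^-$, and checking that every other proper subset fails closure under $\neg$ or $\bcdw$) yields $\Hop\Sop(\sbX^-) \subseteq \{\textup{trivial},\Alg{2}^-,\sbX^-\}$. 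Consequently, any proper subvariety of $\Vop(\sbX^-)$ has all its FSI members inside $\{\textup{trivial},\Alg{2}^-\}$ and is therefore contained in $\Vop(\Alg{2}^-)$. The same enumeration shows that the three varieties are pairwise distinct.

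For (b), let $\Class{W}$ be a subvariety of $\Class{RA}$ with $\Vop(\Alg{2}^-) \subsetneq \Class{W}$. By Theorem~\ref{surjection}, I may write $\Class{W} = \Vop(\Class{K}^-)$ for some subvariety $\Class{K}$ of $\Class{DMM}$ that contains $\Alg{2}$; as $\Class{W} \neq \Vop(\Alg{2}^-)$, this forces $\Class{K} \supsetneq \Vop(\Alg{2})$. If $\sbC_4 \in \Class{K}$ or $\sbD_4 \in \Class{K}$, then $\Class{W}$ includes $\Vop(\sbC_4^-)$ or $\Vop(\sbD_4^-)$. Otherwise, Theorem~\ref{omit c4 d4} forces $\Class{K}$ to consist of Sugihara monoids; and since every variety is generated by its finitely generated SI members, which (for Sugihara monoids) are the $\sbS_n$ by Theorem~\ref{sug thm}, the strict inclusion $\Vop(\sbS_2) \subsetneq \Class{K}$ produces some $\sbS_n$ with $n \geq 3$ inside $\Class{K}$. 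Because $\sbS_3$ is a homomorphic image of every such $\sbS_n$, we get $\sbS_3 \in \Class{K}$, whence $\Vop(\sbS_3^-) \subseteq \Class{W}$.

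The main obstacle is the translation between subvarieties of $\Class{RA}$ and of $\Class{DMM}$, which is the job of Theorem~\ref{surjection} (itself underpinned by Theorem~\ref{thm:fgenRAisDMM}(\ref{ra fg reducts})). Once the problem is transported into $\Class{DMM}$, the classification of its minimal subvarieties (Theorem~\ref{atoms}), the dichotomy in Theorem~\ref{omit c4 d4}, and the description of Sugihara monoid subvarieties together finish the argument.
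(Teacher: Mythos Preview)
Your proof is correct and follows essentially the same route as the paper: J\'onsson's Theorem for part~(a), and for part~(b) a transfer to $\Class{DMM}$ followed by Theorem~\ref{omit c4 d4} and the Sugihara-monoid classification. The only cosmetic difference is that you invoke the packaged surjection of Theorem~\ref{surjection} to obtain a full subvariety $\Class{K}\subseteq\Class{DMM}$, whereas the paper works directly with a single finitely generated SI algebra $\sbA\in\Class{W}\setminus\Vop(\Alg{2}^-)$, lifts it to $\sbA^+$ via Theorem~\ref{thm:fgenRAisDMM}(\ref{ra fg reducts}), and argues with $\Vop(\sbA^+)$; the two arguments are interchangeable.
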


\begin{proof}
Let $\sbX\in\{\sbS_3^-, \sbC_4^-,\sbD_4^-\}$, so $\sbX$ is simple (by Lemma~\ref{thm:isomorphicCongruences})
and $\sbX$ has just one nontrivial proper subalgebra, which is isomorphic to $\Alg{2}^-$.
Then
every SI member of $\Vop(\Alg{X})$ is isomorphic to $\Alg{2}^-$ or to $\Alg{X}$, by J\'{o}nsson's Theorem (cf.\ the proof of Theorem~\ref{atoms}).
So,
there are no subvarieties of $\Class{RA}$ strictly between $\Vop(\Alg{2}^-)$ and $\Vop(\Alg{X})$,
and $\Vop(\Alg{X}) \neq \Vop(\Alg{Y})$ for $\Alg{X}\neq \Alg{Y} \in \{ \sbS_3^-, \sbC_4^-, \sbD_4^- \}$.

Conversely,
let $\Class{K}$ be a subvariety of $\Class{RA}$\textup{,} not consisting entirely of Boolean algebras.  We must show that
$\Vop(\Alg{X}) \subseteq \Class{K}$ for some $\Alg{X} \in \{ \sbS_3^-, \sbC_4^-, \sbD_4^- \}$.

As
$\Vop(\Alg{2}^-)\subsetneq\mathsf{K}$ are varieties,
there exists
a finitely generated SI algebra $\textup{$\Alg{A} \in \Class{K}\setminus\Vop(\Alg{2}^-)$}$.
Now
$\Alg{A}$ is the $e$--free reduct of some $\Alg{A}^+\in\Class{DMM}$, by Theorem~\ref{thm:fgenRAisDMM}(\ref{ra fg reducts}),
and $\Alg{A}^+$ is SI (by Lemma~\ref{thm:isomorphicCongruences}) and finitely generated.
By (\ref{eq:reducts}),
$$\Vop(\Alg{A}^+)^- \subseteq \Vop(\Alg{A})
\subseteq \Class{K},$$
so it suffices to show that one
of $\sbS_3$, $\sbC_4$ or $\sbD_4$ belongs to $\Vop(\Alg{A}^+)$.

Suppose $\sbC_4,\sbD_4\notin\Vop(\Alg{A}^+)$.  Then
$\Alg{A}^+$ is
a Sugihara monoid, by Theorem~\ref{omit c4 d4}.
As $\Alg{A}^+$ is SI, finitely generated, and not a Boolean algebra, it is isomorphic to $\sbS_n$ for some
$n\geq 3$, by Theorem~\ref{sug thm}.  Then $\sbS_3\in\mathbb{H}(\Alg{A}^+)\subseteq \mathbb{V}(\Alg{A}^+)$
(by the remarks preceding Corollary~\ref{osm varieties}), completing the proof.
\end{proof}

\begin{tightcenter}
\begin{picture}(100,168)(-50,-3)

\put(0,20){\circle*{4}}
\put(0,40){\line(0,-1){20}}
\put(0,40){\circle*{4}}
\put(0,60){\line(0,-1){20}}
\put(0,60){\circle*{4}}
\put(-20,60){\line(1,-1){20}}
\put(-20,60){\circle*{4}}
\put(20,60){\line(-1,-1){20}}
\put(20,60){\circle*{4}}

\qbezier(-20,60)(-40,75)(-40,100)
\qbezier(-40,100)(-35,135)(0,140)

\qbezier(20,60)(40,75)(40,100)
\qbezier(40,100)(35,135)(0,140)
\put(0,140){\circle*{4}}

\put(-13,7){\small trivial}
\put(5,30){\small $ \Vop(\Alg{2}^-) $}
\put(-55,48){\small $ \Vop(\sbC_4^-) $}
\put(20,48){\small $ \Vop(\sbS_3^-) $}
\put(-17,67){\small $ \Vop(\sbD_4^-) $}
\put(-5,145){\small $ \Class{RA} $}

\end{picture}
\end{tightcenter}

This means that the logics algebraized by\/ $\Vop(\sbS_3^-)$\textup{,} $\Vop(\sbC_4^-)$ and\/ $\Vop(\sbD_4^-)$ are exactly the maximal non-classical axiomatic extensions of\/
$\Logic{R}$\textup{,} as was observed in \cite{Swi95}.
The proof of Theorem~\ref{thm:minimalvarietiesRA} in \cite{Swi95} relies on a lemma,
which says that every bounded SI relevant algebra is rigorously compact \cite[Lem.~8]{Swi95}.  In \cite{Swi95}, the proof of the lemma uses the ternary relation semantics for $\mathbf{R}$.  As the lemma is itself of some interest, we supply an algebraic justification of it here.  The key to the argument is that the subalgebras of FSI relevant algebras are still FSI, but that fact is concealed by the failure of the CEP and the lack of an obvious analogue for Lemma~\ref{fsi si simple}(\ref{fsi}) in $\mathsf{RA}$.  One way to circumvent these difficulties is to extend the concept of deductive filters to relevant algebras.

\begin{defn}\label{deductive filter}
\textup{A subset $F$ of a
relevant algebra $\Alg{A}$
is called a
\emph{deductive filter}
of $\Alg{A}$ if $F$ is a lattice filter of $\langle A;\wedge,\vee\rangle$ and
\[
\textup{$|a|\seteq a\rig a \in F$ for all $a \in A$.}
\]}\end{defn}
Clearly, the set of deductive filters of $\Alg{A}$ is closed under arbitrary intersections and under unions of non-empty directed subfamilies,
so it is both an algebraic closure system over $A$ and the universe of an algebraic lattice $\boldsymbol{\mathit{DFil}}\,\Alg{A}$, ordered by
inclusion.  We denote by $\mathit{DFg}^\sbA X$ the smallest deductive filter of $\Alg{A}$ containing $X$, whenever $X \subseteq A$.  Thus,
the compact elements of $\boldsymbol{\mathit{DFil}}\,\Alg{A}$ are just the finitely generated deductive filters of $\sbA$, i.e., those of the
form $\mathit{DFg}^\sbA X$ for some finite $X\subseteq A$.

The deductive filters of a relevant algebra $\sbA$ are just the subsets that contain all $\sbA$--instances of the axioms of $\mathbf{R}$
and are closed under the inference rules---modus ponens and adjunction---of $\mathbf{R}$.
(This is easily verified, using (\ref{t eq}) and Theorem~\ref{thm:fgenRAisDMM}(\ref{ra subreducts}).)
Therefore, by the theory of algebraization \cite[Thm.~5.1]{BP89}, and since $\mathsf{RA}$
is a variety, we have
\[
\textup{$\boldsymbol{\mathit{DFil}}\,\Alg{A} \cong \boldsymbol{\mathit{Con}}\,\Alg{A}$, for all $\sbA\in\mathsf{RA}$.}
\]

\begin{thm}
\label{thm:filtergenerationRA}
Let\/ $\Alg{A}$ be a relevant algebra, with\/ $a,b \in A$\textup{.}  Then
\begin{enumerate}
\item \label{mods}
$\left|\left|a\right|\wedge\left|b\right|\right|\leqslant\left|a\right|\wedge\left|b\right|$\textup{;}

\smallskip

\item \label{thm:filtergenerationRA:single}
$\mathit{DFg}^\Alg{A}\{a\} = \{ c \in A : a \wedge |d| \leqslant c \text{ for some } d \in A \}$\textup{;}

\smallskip

\item \label{thm:filtergenerationRA:meet}
$\mathit{DFg}^\sbA\{a\} \cap \mathit{DFg}^\sbA\{b\} = \mathit{DFg}^\sbA\{a \vee b\}$\textup{.}
\end{enumerate}
\end{thm}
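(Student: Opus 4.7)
My plan is to prove (\ref{mods}) first using the embedding available for finitely generated relevant algebras, then to leverage (\ref{mods}) when verifying that the set on the right of (\ref{thm:filtergenerationRA:single}) is the generated deductive filter, and finally to deduce (\ref{thm:filtergenerationRA:meet}) by lattice distributivity together with (\ref{mods}) and (\ref{thm:filtergenerationRA:single}).

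For (\ref{mods}), I would apply Theorem~\ref{thm:fgenRAisDMM}(\ref{ra fg reducts}) to $\sbB \seteq \boldsymbol{\mathit{Sg}}^\sbA\{a,b\}$: being finitely generated, $\sbB$ is the $e$--free reduct of some De Morgan monoid $\sbB^+$. Inside $\sbB^+$, law (\ref{t laws}) gives $e \leqslant |a|$ and $e \leqslant |b|$, so $e \leqslant |a| \wedge |b|$, and (\ref{t eq}) converts this to $(|a| \wedge |b|) \rig (|a| \wedge |b|) \leqslant |a| \wedge |b|$. Since $x \rig y$ abbreviates $\neg(x \bcdw \neg y)$, this inequality is term-expressible in the signature of $\sbA$, so it transfers back.

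For (\ref{thm:filtergenerationRA:single}), write $F$ for the right-hand set. I would check that $F$ is a deductive filter of $\sbA$ containing $a$, and that any deductive filter containing $a$ must include $F$. Upward-closedness of $F$ is immediate; $a \in F$ via any witness $d$, since $a \wedge |d| \leqslant a$; and for each $c \in A$ the witness $d \seteq c$ places $|c|$ in $F$, since $a \wedge |c| \leqslant |c|$ holds trivially. Closure of $F$ under $\wedge$ is where (\ref{mods}) comes in: if $a \wedge |d_i| \leqslant c_i$ for $i = 1,2$, then $d \seteq |d_1| \wedge |d_2|$ satisfies $|d| \leqslant d$ by (\ref{mods}), so $a \wedge |d| \leqslant a \wedge |d_1| \wedge |d_2| \leqslant c_1 \wedge c_2$. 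The reverse inclusion $F \subseteq \mathit{DFg}^\sbA\{a\}$ is immediate, because any deductive filter containing $a$ must contain every $|d|$, hence every $a \wedge |d|$, hence everything above.

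For (\ref{thm:filtergenerationRA:meet}), the inclusion $\supseteq$ is clear, since both $\mathit{DFg}^\sbA\{a\}$ and $\mathit{DFg}^\sbA\{b\}$ are upward-closed and therefore contain $a \vee b$. For the converse, take $c$ in the intersection; (\ref{thm:filtergenerationRA:single}) supplies $d_1, d_2$ with $a \wedge |d_1| \leqslant c$ and $b \wedge |d_2| \leqslant c$. Lattice distributivity then yields $(a \vee b) \wedge |d_1| \wedge |d_2| \leqslant c$, and setting $d \seteq |d_1| \wedge |d_2|$ and invoking (\ref{mods}) gives $(a \vee b) \wedge |d| \leqslant c$, placing $c$ in $\mathit{DFg}^\sbA\{a \vee b\}$ by (\ref{thm:filtergenerationRA:single}). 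The principal obstacle throughout is (\ref{mods}): the operation $|\cdot|$ lives in $\sbA$, but its defining inequality rests on the neutral element $e$ of the RL signature, which is absent from $\mathsf{RA}$. The bridge is Theorem~\ref{thm:fgenRAisDMM}(\ref{ra fg reducts}), which produces an enveloping De Morgan monoid for any pair of relevant-algebra elements we need to control; once this is secured, the rest is routine lattice manipulation.
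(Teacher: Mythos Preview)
Your proof is correct and follows essentially the same route as the paper: (\ref{mods}) is obtained by passing to an enveloping De Morgan monoid (you use Theorem~\ref{thm:fgenRAisDMM}(\ref{ra fg reducts}) on the subalgebra generated by $\{a,b\}$, the paper uses~\ref{thm:fgenRAisDMM}(\ref{ra subreducts}) directly, but the argument via $e\leqslant |a|\wedge|b|$ and (\ref{t eq}) is identical), and (\ref{thm:filtergenerationRA:single}) and (\ref{thm:filtergenerationRA:meet}) proceed exactly as in the paper, with (\ref{mods}) supplying the witness $|d_1|\wedge|d_2|$ for closure under meets and the distributivity step.
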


\begin{proof}
(\ref{mods}) \,It suffices, by Theorem~\ref{thm:fgenRAisDMM}(\ref{ra subreducts}) and (\ref{t eq}), to show that $e\leqslant \left|a\right|\wedge\left|b\right|$ in
any De Morgan monoid that contains $\sbA$ as a subreduct.  And this follows from (\ref{t laws}).

(\ref{thm:filtergenerationRA:single}) \,Let $F =  \{ c \in A : a \wedge |d| \leqslant c \text{ for some } d \in A \}$. Then $a \in F$, since $a \wedge |a| \leqslant a$.
We claim that $F$ is a deductive filter of $\Alg{A}$.
Clearly, $F$ is upward closed.
Suppose $c,c' \in F$, so there exist $d,d' \in A$ such that $a \wedge |d| \leqslant c $ and $a \wedge |d'| \leqslant c'$. Then
$ c \wedge c' \geqslant a \wedge |d| \wedge |d'| \geqslant a \wedge ||d| \wedge |d'||$,
by (\ref{mods}),
so $c \wedge c' \in F$.
Also, for any $d \in A$, we have $a \wedge |d| \leqslant |d|$, so $|d| \in F$.
This vindicates the claim.
It remains to show that $F$ is the \emph{smallest} deductive filter of $\sbA$ containing $a$.  So, let $G\in\mathit{DFil}\,\sbA$, with $a\in G$, and let $c \in F$.  Choose $d \in A$ with
$a \wedge |d| \leqslant c$.  Since $a,|d| \in G$, we have $ a \wedge |d| \in G$, whence $c \in G$, as required.

(\ref{thm:filtergenerationRA:meet})
\,Certainly, $ a \vee b \in \mathit{DFg}^\sbA\{a\} \cap \mathit{DFg}^\sbA\{b\}$, as $ a,b \leqslant a \vee b$.  Now
suppose $ c \in \mathit{DFg}^\sbA\{a\} \cap \mathit{DFg}^\sbA\{b\}$.  Choose $d, d' \in A$, with $ a \wedge |d| \leqslant c $ and $ b \wedge |d'| \leqslant c$.
Then $ c \geqslant a \wedge |d| \wedge |d'|,\,b \wedge |d| \wedge |d'|$, so \enlargethispage{5pt}
\begin{align*}
c &\geqslant (a \wedge |d| \wedge |d'|)\vee(b \wedge |d| \wedge |d'|) \\
&= (a \vee b) \wedge (|d| \wedge |d'|) \textup{ \ (by distributivity) }\\
&\geqslant  (a \vee b) \wedge ||d|\wedge|d'|| \textup{ \ (by (\ref{mods})).}
\end{align*}
Thus, $ c \in \mathit{DFg}^\sbA\{a \vee b\}$ and the result follows.
\end{proof}

\begin{cor}
\label{thm:subalgebrasAreFSI}
The class of FSI relevant algebras
is closed under subalgebras (and ultraproducts).
\end{cor}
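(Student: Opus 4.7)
The plan is to reduce FSI of a relevant algebra to a universal first-order sentence in its elements, and then invoke general preservation facts. The first step is a characterization: for any $\sbA\in\mathsf{RA}$ and any $a\in A$, membership of $a$ in the smallest deductive filter $F_0(\sbA)\seteq\mathit{DFg}^\sbA\emptyset$ is equivalent to the inequality $a\rig a\leqslant a$. One direction is immediate from upward closure of $F_0(\sbA)$, since $a\rig a=\left|a\right|\in F_0(\sbA)$. For the converse, embed $\sbA$ as an $e$--free subreduct of a De Morgan monoid $\sbA^+$ via Theorem~\ref{thm:fgenRAisDMM}(\ref{ra subreducts}); if $a\in F_0(\sbA)$, then $\left|d\right|\leqslant a$ for some $d\in A$, so $e\leqslant\left|d\right|\leqslant a$ in $\sbA^+$ by (\ref{t laws}), and (\ref{t eq}) then yields $a\rig a\leqslant a$.

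Next I would use the isomorphism $\boldsymbol{\mathit{DFil}}\,\sbA\cong\boldsymbol{\mathit{Con}}\,\sbA$ to translate FSI of $\sbA$ into meet-irreducibility of $F_0(\sbA)$ in the algebraic lattice $\boldsymbol{\mathit{DFil}}\,\sbA$. As the compact elements of $\boldsymbol{\mathit{DFil}}\,\sbA$ are the principal deductive filters $\mathit{DFg}^\sbA\{a\}$, meet-irreducibility of $F_0(\sbA)$ reduces to checking that $\mathit{DFg}^\sbA\{a\}\cap\mathit{DFg}^\sbA\{b\}=F_0(\sbA)$ implies $\mathit{DFg}^\sbA\{a\}=F_0(\sbA)$ or $\mathit{DFg}^\sbA\{b\}=F_0(\sbA)$. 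By Theorem~\ref{thm:filtergenerationRA}(\ref{thm:filtergenerationRA:meet}), together with the obvious fact that $\mathit{DFg}^\sbA\{c\}=F_0(\sbA)$ iff $c\in F_0(\sbA)$, this becomes: for all $a,b\in A$, $a\vee b\in F_0(\sbA)$ implies $a\in F_0(\sbA)$ or $b\in F_0(\sbA)$.

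Combining the two steps, $\sbA$ is FSI exactly when it satisfies the universal first-order sentence
\[
\forall x,y\colon\;(x\vee y)\rig(x\vee y)\leqslant x\vee y\;\Longrightarrow\;x\rig x\leqslant x\textup{ \,or\, }y\rig y\leqslant y,
\]
which can be recast in pure first-order form via $u\leqslant v\Leftrightarrow u\wedge v=u$. Universal sentences are preserved under substructures, giving closure of the FSI relevant algebras under subalgebras; closure under ultraproducts then follows from {\L}o\'{s}'s theorem. The one mildly delicate point is the reduction of meet-irreducibility of $F_0(\sbA)$ in the full lattice $\boldsymbol{\mathit{DFil}}\,\sbA$ to the principal-filter case, but this is routine for algebraic lattices: any $F\in\boldsymbol{\mathit{DFil}}\,\sbA$ that strictly contains $F_0(\sbA)$ contains some $c\notin F_0(\sbA)$, and then $\mathit{DFg}^\sbA\{c\}\subseteq F$ is a principal deductive filter strictly above $F_0(\sbA)$, so any failure of the principal-filter version of meet-irreducibility lifts to a failure in the full lattice (and conversely).
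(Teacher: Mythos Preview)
Your argument is correct and genuinely different from the paper's. The paper does not extract an explicit universal sentence. Instead, it observes (again via Theorem~\ref{thm:filtergenerationRA}(\ref{thm:filtergenerationRA:meet}), together with the fact that finitely generated deductive filters are principal) that the intersection of any two compact congruences of a relevant algebra is compact, and then invokes the Blok--Pigozzi theorem that, in a congruence distributive variety, this condition is equivalent to the FSI members forming a universal class. So both proofs rest on the same filter identity $\mathit{DFg}^\sbA\{a\}\cap\mathit{DFg}^\sbA\{b\}=\mathit{DFg}^\sbA\{a\vee b\}$, but you combine it with the first-order description $c\in F_0(\sbA)\Leftrightarrow c\rig c\leqslant c$ of the least deductive filter to obtain the concrete universal axiom
\[
\forall x,y\;\bigl((x\vee y)\rig(x\vee y)\leqslant x\vee y\;\Longrightarrow\;x\rig x\leqslant x\textup{ or }y\rig y\leqslant y\bigr),
\]
whereas the paper packages the same information abstractly as ``compact $\cap$ compact is compact'' and defers to \cite{BP86a}. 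Your route is more elementary and self-contained; the paper's route situates the result within a known general criterion. One small point worth making explicit in your write-up: the description $F_0(\sbA)=\{c:\left|d\right|\leqslant c\textup{ for some }d\}$ that you use for the converse direction follows from Theorem~\ref{thm:filtergenerationRA}(\ref{mods}), which ensures that finite meets of elements $\left|d_i\right|$ are bounded below by a single $\left|d\right|$.
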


\begin{proof}
Let $\Alg{A} \in \Class{RA}$.
Clearly, $\mathit{DFg}^\Alg{A}\{a_1, \dots, a_n\} = \mathit{DFg}^\Alg{A}\{a_1 \wedge \,\dots\, \wedge a_n\}$ for all $\textup{$a_1,\dots,a_n\in A$}$,
so every finitely generated deductive filter of $\Alg{A}$ is principal.
Therefore, by Theorem~\ref{thm:filtergenerationRA}(\ref{thm:filtergenerationRA:meet}), the intersection of any two compact (i.e., finitely generated)
elements of
$\boldsymbol{\mathit{DFil}}\,\Alg{A}$ is compact.  The same applies to the lattice $\boldsymbol{\mathit{Con}}\,\Alg{A}$, as it is isomorphic
to $\boldsymbol{\mathit{DFil}}\,\Alg{A}$ (and since lattice isomorphisms between complete lattices preserve compactness).
Now the result follows from the well known theorem below.
\end{proof}

\begin{thm}
\textup{(\cite{BP86a})} \,In any congruence distributive variety\/ $\mathsf{K}$\textup{,} the following conditions are equivalent.
\begin{enumerate}
\item
For any\/ $\sbA\in\mathsf{K}$\textup{,} the intersection of any two compact (i.e., finitely generated) congruences of\/ $\sbA$ is compact.

\smallskip

\item
$\mathsf{K}_\textup{FSI}$ is closed under\/ $\mathbb{S}$ and\/ $\mathbb{P}_\mathbb{U}$ (i.e., it is a universal class).
\end{enumerate}
\end{thm}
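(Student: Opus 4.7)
My plan is to prove the two implications separately, using the fact that in any algebra $\sbA$, the compact elements of $\boldsymbol{\mathit{Con}}\,\sbA$ are exactly the finite joins of principal congruences $\Theta^\sbA(a,b)$, and that $\sbA$ is FSI iff $\Theta^\sbA(a,b) \cap \Theta^\sbA(c,d) \neq \id_A$ whenever $a \neq b$ and $c \neq d$. Both (1) and (2) thus concern the behaviour of meets of principal congruences.

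For (1)$\Rightarrow$(2), I would apply (1) in the free $\mathsf{K}$-algebra $\boldsymbol{F}$ on four generators $x,y,z,w$: the intersection $\Theta^{\boldsymbol{F}}(x,y) \cap \Theta^{\boldsymbol{F}}(z,w)$ is then compact, hence equal to $\Theta^{\boldsymbol{F}}(\sigma_1,\tau_1) \vee \dots \vee \Theta^{\boldsymbol{F}}(\sigma_n,\tau_n)$ for certain terms $\sigma_i,\tau_i$ in $x,y,z,w$. Because principal congruence generation is preserved by surjective homomorphisms, the analogous identity $\Theta^\sbA(a,b) \cap \Theta^\sbA(c,d) = \bigvee_{i \leqslant n} \Theta^\sbA(\sigma_i(a,b,c,d),\tau_i(a,b,c,d))$ holds for every $\sbA \in \mathsf{K}$ and all $a,b,c,d \in A$. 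Then FSI-membership within $\mathsf{K}$ is captured by the universal first order sentence
\[
\forall a,b,c,d\colon\; (a = b) \;\vee\; (c = d) \;\vee\; \bigvee_{i \leqslant n} \sigma_i(a,b,c,d) \neq \tau_i(a,b,c,d),
\]
and universal sentences are preserved by $\mathbb{S}$ and $\mathbb{P}_\mathbb{U}$, giving (2).

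For (2)$\Rightarrow$(1), take $\sbA \in \mathsf{K}$ with compact $\alpha,\beta \in \boldsymbol{\mathit{Con}}\,\sbA$. By Birkhoff, $\sbA/(\alpha\cap\beta)$ is a subdirect product of SI (hence FSI) quotients $\sbA/\psi_i$, and congruence distributivity forces each $\psi_i$ to contain $\alpha$ or $\beta$. Partitioning the index set accordingly yields $\alpha',\beta' \in \boldsymbol{\mathit{Con}}\,\sbA$ with $\alpha \leqslant \alpha'$, $\beta \leqslant \beta'$ and $\alpha' \cap \beta' = \alpha \cap \beta$. To deduce compactness of $\alpha \cap \beta$, I would then examine the directed family of its compact sub-congruences and form an ultraproduct of the quotients $\sbA/\delta$ indexed by a suitable ultrafilter on the finite subsets of $\alpha \cap \beta$; because $\mathsf{K}_\textup{FSI}$ is assumed closed under $\mathbb{P}_\mathbb{U}$ and $\mathbb{S}$, the ultraproduct contains a subalgebra that detects any pair of $\alpha \cap \beta$ not already accounted for by a compact sub-congruence, and this detection forces the family to stabilize at a finitely generated congruence equal to $\alpha \cap \beta$.

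The main obstacle is the (1)$\Rightarrow$(2) direction, specifically ensuring that the chosen terms $\sigma_i,\tau_i$ in $\boldsymbol{F}$ really witness the intersection in every $\sbA \in \mathsf{K}$ once $a,b,c,d$ are substituted for $x,y,z,w$. This depends on the homomorphism preservation property for principal congruences, which in turn rests on Mal'cev's term characterization: $(u,v) \in \Theta^\sbA(a,b)$ iff a finite sequence of unary polynomial evaluations links $u$ to $v$ using the pair $\{a,b\}$. Congruence distributivity --- via J\'{o}nsson terms --- streamlines this enough to make the uniform witnessing work, completing the argument.
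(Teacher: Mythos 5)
First, a point of reference: the paper does not prove this theorem at all --- it is quoted as a known result of Blok and Pigozzi \cite{BP86a} --- so there is no in-paper argument to compare yours against. Your outline follows the standard route (finitely many quaternary witness terms for meets of principal congruences, then a universal first-order definition of FSI-ness), which is the right idea, but both implications have genuine gaps as written. In (1)$\Rightarrow$(2), the transfer of the identity from $\boldsymbol{F}_{\mathsf{K}}(x,y,z,w)$ to an arbitrary $\sbA\in\mathsf{K}$ is \emph{not} a consequence of ``preservation of principal congruences under surjective homomorphisms'': the map determined by $x,y,z,w\mapsto a,b,c,d$ is surjective only onto $\boldsymbol{\mathit{Sg}}^{\sbA}\{a,b,c,d\}$, whereas $\Theta^{\sbA}(a,b)\cap\Theta^{\sbA}(c,d)$ is computed in all of $\sbA$, and a pair in this meet may be linked to $\{a,b\}$ and to $\{c,d\}$ only by Mal'cev chains passing outside that subalgebra. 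The inclusion $\supseteq$ is automatic (any homomorphism carries $\Theta(x,y)$ into $\Theta(a,b)$), but $\subseteq$ requires working in $\boldsymbol{F}_{\mathsf{K}}(\aleph_0)$ and retracting onto the four-generated free algebra, using the distributive-lattice identity $(\alpha\vee\theta)\wedge(\beta\vee\theta)=(\alpha\wedge\beta)\vee\theta$ to push meets through surjections. You correctly flag this as the main obstacle, but the appeal to J\'{o}nsson terms does not resolve it.

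The gap in (2)$\Rightarrow$(1) is more serious: an ultraproduct of the quotients $\sbA/\delta$, for $\delta$ ranging over compact congruences below $\alpha\cap\beta$, is of no use, because those quotients need not be FSI, so the hypothesis on $\mathsf{K}_{\textup{FSI}}$ says nothing about such an ultraproduct, and the ``detection'' step has no content as stated. The missing idea is a Zorn's-lemma step producing SI quotients: reduce (via distributivity) to $\alpha=\Theta(a,b)$ and $\beta=\Theta(c,d)$ principal with meet $\gamma$; if $\gamma$ is not compact, then for each compact $\delta\subseteq\gamma$ choose $(u_\delta,v_\delta)\in\gamma\setminus\delta$ and a congruence $\psi_\delta\supseteq\delta$ maximal with respect to omitting $(u_\delta,v_\delta)$, so that $\sbA/\psi_\delta$ is SI and $(a,b),(c,d)\notin\psi_\delta$. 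Taking an ultrafilter $\mathcal{U}$ refining the directedness filter on the index set, the image of $\sbA$ in $\prod_\delta(\sbA/\psi_\delta)/\mathcal{U}$ is a quotient $\sbA/\kappa$ with $\gamma\subseteq\kappa$ but $a/\kappa\neq b/\kappa$ and $c/\kappa\neq d/\kappa$; by (2) it is FSI, so $(\Theta(a,b)\vee\kappa)\cap(\Theta(c,d)\vee\kappa)\neq\kappa$, while distributivity evaluates the left side as $\gamma\vee\kappa=\kappa$ --- a contradiction. It is exactly here, and not at the quotients $\sbA/\delta$, that closure of $\mathsf{K}_{\textup{FSI}}$ under $\mathbb{S}$ and $\mathbb{P}_\mathbb{U}$ is consumed.
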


Finally, as promised, a slight generalization of \cite[Lem.\,8]{Swi95} follows easily from Cor\-ollary~\ref{thm:subalgebrasAreFSI}:

\begin{thm}\label{ra bdd fsi is rc}
Every bounded FSI relevant algebra\/ $\sbA$ is rigorously compact.
\end{thm}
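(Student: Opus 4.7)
The plan is to reduce the claim to the De Morgan monoid case, Theorem~\ref{dm fsi rigorously compact}, via the structural bridge provided by Theorem~\ref{thm:fgenRAisDMM}(\ref{ra fg reducts}). Let $\sbA$ be a bounded FSI relevant algebra with extrema $\bot,\top$, and fix $a\in A$ with $a\neq\bot$. It suffices to prove $\top\bdot a=\top$ (condition~(i) of Lemma~\ref{rigorously compact}, which still makes sense in $\mathsf{RA}$).

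I would work inside the subalgebra $\sbB\seteq\boldsymbol{\mathit{Sg}}^{\sbA}\{a,\bot,\top\}$ of $\sbA$, which is finitely generated and still FSI by Corollary~\ref{thm:subalgebrasAreFSI}. By Theorem~\ref{thm:fgenRAisDMM}(\ref{ra fg reducts}), $\sbB$ is the $e$--free reduct of a (unique) De Morgan monoid $\sbB^{+}$ on the same universe. Since an $e$--free reduct shares the lattice order with its full expansion, and since $\bot,\top\in B$ by design, $\sbB^{+}$ is a bounded De Morgan monoid with the very same extrema $\bot,\top$. Moreover, Lemma~\ref{thm:isomorphicCongruences} transports FSI-ness from $\sbB$ to $\sbB^{+}$ (they have isomorphic congruence lattices).

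Now I would invoke Theorem~\ref{dm fsi rigorously compact}: every bounded FSI De Morgan monoid is rigorously compact. Applied to $\sbB^{+}$, this gives $\top\bdot a=\top$ in $\sbB^{+}$, and since fusion is a common operation of $\sbB^{+}$ and $\sbB$, the same identity holds in $\sbB$, hence in $\sbA$. As $a$ was an arbitrary non-$\bot$ element, condition~(i) of Lemma~\ref{rigorously compact} is verified for $\sbA$.

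The only subtlety that needs guarding against is the mismatch of extrema between $\sbA$ and the subalgebra: Corollary~\ref{thm:FGRAisBounded} guarantees that a finitely generated subalgebra is bounded, but its extrema need not agree with $\bot,\top$ of $\sbA$. Including $\bot$ and $\top$ themselves in the generating set of $\sbB$ sidesteps this issue cleanly, and this is the main conceptual step. Everything else is bookkeeping based on results already established.
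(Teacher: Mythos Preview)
Your proposal is correct and follows essentially the same route as the paper's proof: pass to the finitely generated subalgebra $\sbB=\boldsymbol{\mathit{Sg}}^{\sbA}\{\bot,a,\top\}$, use Corollary~\ref{thm:subalgebrasAreFSI} to keep FSI-ness, expand to a De Morgan monoid $\sbB^{+}$ via Theorem~\ref{thm:fgenRAisDMM}(\ref{ra fg reducts}), transfer FSI via Lemma~\ref{thm:isomorphicCongruences}, and conclude by Theorem~\ref{dm fsi rigorously compact}. Your explicit remark about including $\bot,\top$ among the generators (so that the extrema of $\sbB^{+}$ agree with those of $\sbA$) is exactly the point that makes the argument go through.
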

\begin{proof}
Let $\bot,\top$ be the extrema of $\sbA$, and consider $\bot\neq a\in A$.  We must show that $\top\bcdw a=\top$.
Observe that $\sbB\seteq\boldsymbol{\mathit{Sg}}^\sbA \{\bot,a,\top\}$ is FSI,
by Corollary~\ref{thm:subalgebrasAreFSI}.
As $\sbB$ is
finitely generated, it is a reduct of a (bounded) De Morgan monoid $\sbB^+$, by Theorem~\ref{thm:fgenRAisDMM}(\ref{ra fg reducts}), which
is also FSI, by Lemma~\ref{thm:isomorphicCongruences}.  Now $\sbB^+$ is rigorously compact, by
Theorem~\ref{dm fsi rigorously compact}, so $\top\bcdw a=\top$.
\end{proof}

\begin{cor}
Every finitely generated subalgebra of an FSI relevant algebra is rigorously compact.
\end{cor}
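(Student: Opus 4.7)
The plan is to assemble this corollary as an immediate consequence of three results already established in this section. Let $\sbA$ be an FSI relevant algebra and let $\sbB$ be a finitely generated subalgebra of $\sbA$. I need to show $\sbB$ is rigorously compact.

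First, I would invoke Corollary~\ref{thm:subalgebrasAreFSI}, which says the class of FSI relevant algebras is closed under subalgebras, to conclude that $\sbB$ itself is FSI. Next, I would apply Corollary~\ref{thm:FGRAisBounded}, which states that every finitely generated relevant algebra is bounded, so $\sbB$ has extrema. At this point $\sbB$ is a bounded FSI relevant algebra, so Theorem~\ref{ra bdd fsi is rc} applies directly and gives that $\sbB$ is rigorously compact.

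There is no real obstacle here: the work was done in proving the three ingredients, and in particular in establishing that FSI is preserved under subalgebras of relevant algebras (which required the deductive-filter machinery of Theorem~\ref{thm:filtergenerationRA} to bypass the failure of the CEP). The corollary itself is just the composition of these facts.
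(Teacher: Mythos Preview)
Your proposal is correct and matches the paper's own proof exactly: the paper simply says to use Corollaries~\ref{thm:FGRAisBounded} and~\ref{thm:subalgebrasAreFSI} together with Theorem~\ref{ra bdd fsi is rc}, which is precisely the three-step composition you describe.
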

\begin{proof}
Use Corollaries~\ref{thm:FGRAisBounded} and \ref{thm:subalgebrasAreFSI} and Theorem~\ref{ra bdd fsi is rc}.
\end{proof}

\end{document}